\newtheorem{thm}{Theorem}
\newtheorem{cor}{Corollary}
\newtheorem{conj}{Conjecture}
\newtheorem{lemma}[thm]{Lemma}
\newtheorem{prop}{Proposition}
\DeclareMathOperator{\F}{\mathbb{F}}
\DeclareMathOperator{\Tr}{Tr}
\begin{document}
		\baselineskip=16.3pt
		\parskip=14pt
		\begin{center}
			\section*{Divisibility of L-Polynomials for a Family of Artin-Schreier Curves}
			{\large 
			Gary McGuire\footnote{email gary.mcguire@ucd.ie, Research supported by Science Foundation Ireland Grant 13/IA/1914} and
			Emrah Sercan Y{\i}lmaz \footnote {Research supported by Science Foundation Ireland Grant 13/IA/1914} 
			\\
			School of Mathematics and Statistics\\
			University College Dublin\\
			Ireland}

		\end{center}
		
		\subsection*{Abstract}
In this paper we consider the curves $C_k^{(p,a)} : y^p-y=x^{p^k+1}+ax$ defined
over $\mathbb F_p$ and give a positive answer to a conjecture  about a divisibility condition on $L$-polynomials of the curves $C_k^{(p,a)}$. 
Our proof involves finding an exact formula for the number of $\F_{p^n}$-rational points
on $C_k^{(p,a)}$ for all $n$, and uses a result we proved
elsewhere about the number of rational points on supersingular curves.

\section{Introduction}
Let $p$ be a prime and let $q=p^r$ where $r$ is a positive integer. 
Let $\mathbb{F}_q$ be the finite field with $q$ elements.
Let $X$ be a projective smooth absolutely irreducible curve of genus $g$ 
defined over $\mathbb{F}_q$.
The $L$-polynomial of the curve $X$ over $\mathbb F_{q}$ is defined by
$$L_{X/\mathbb{F}_q}(T)=L_X(T)=\exp\left( \sum_{n=1}^\infty ( \#X(\mathbb F_{q^n}) - q^n - 1 )\frac{T^n}{n}  \right).$$
where $\#X(\mathbb F_{q^n})$ denotes the number of $\mathbb F_{q^n}$-rational points of $X$. 
It is well known that $L_X(T)$ is a polynomial of degree $2g$ with integer coefficients, so we write it as 
\begin{equation} \label{L-poly}
L_X(T)= \sum_{i=0}^{2g} c_i T^i, \ c_i \in \mathbb Z.
\end{equation}
It is also well known that $c_0=1$ and $c_{2g}=q^g$.

We wish to consider the question of divisibility of L-polynomials.
In previous papers \cite{chapman:AM}, \cite{chapman:AMR},
we have studied conditions on the curves under which the L-polynomial of
one curve divides the L-polynomial of another curve.
A theorem of Tate gives an answer in terms of Jacobians.
We refer the reader to these papers for a longer discussion of this topic.

Artin-Schreier curves are degree $p$ coverings of the projective line,
and are cyclic extensions of degree $p$ of the rational function field.
It can be shown that all Artin-Schreier curves have an equation of the
form $y^p-y=f(x)$.
Let $k$ be a positive integer. In this paper we will study 
the family of Artin-Schreier curves $$C_k^{(p,a)} : y^p-y=x^{p^k+1}+ax$$ 
where $a\in \F_p$,
which are defined over $\mathbb F_p$
and have genus $p^k(p-1)/2$.
We will prove the following conjecture, which is stated in \cite{conj4}.
\begin{conj}\label{conj}
	Let $k$ and $m$ be  positive integers. Then the $L$-polynomial of $C_{km}^{(p,a)}$ is divisible by the $L$-polynomial 
	of $C_k^{(p,a)}$.
\end{conj}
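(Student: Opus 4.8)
The plan is to prove Conjecture~\ref{conj} not via Tate's theorem and isogenies of Jacobians, but by the explicit route announced in the abstract: we determine $L_{C_k^{(p,a)}}(T)$ completely, through a closed formula for $\#C_k^{(p,a)}(\mathbb F_{p^n})$ valid for every $n\ge 1$, and then the divisibility is read off by a direct comparison of the two $L$-polynomials.

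\textbf{From points to Weil sums.} Since $C_k^{(p,a)}$ is the Artin--Schreier cover $y^p-y=f(x)$ with $f(x)=x^{p^k+1}+ax$ defined over $\mathbb F_p$, and the cover is totally ramified over $x=\infty$, counting the $p$ or $0$ values of $y$ lying over each $x$ gives
$$\#C_k^{(p,a)}(\mathbb F_{p^n}) \;=\; p^n+1+\sum_{t=1}^{p-1} S_n(t),\qquad S_n(t):=\sum_{x\in\mathbb F_{p^n}}\psi\!\left(\Tr_{\mathbb F_{p^n}/\mathbb F_p}\!\bigl(t\,x^{p^k+1}+ta\,x\bigr)\right),$$
for a fixed nontrivial additive character $\psi$ of $\mathbb F_p$. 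So everything reduces to evaluating these binomial Weil sums.

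\textbf{Evaluating the Weil sums.} The map $x\mapsto \Tr_{\mathbb F_{p^n}/\mathbb F_p}(t\,x^{p^k+1})$ is a quadratic form on $\mathbb F_{p^n}$ viewed as an $\mathbb F_p$-vector space, whose associated bilinear form has radical $\{x\in\mathbb F_{p^n}:x^{p^{2k}}=-x\}$; its dimension is governed by $\gcd(2k,n)$ together with the parity of $n/\gcd(2k,n)$. Completing the square reduces $S_n(t)$ to the homogeneous case $a=0$ up to an explicit root-of-unity factor, and forces $S_n(t)=0$ whenever the relevant $\mathbb F_p$-linear equation $c^{p^{2k}}+c=(\text{const})\cdot ta$ has no solution in $\mathbb F_{p^n}$. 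Pinning down the sign and the exact power of $p$ in the surviving cases is the delicate point: here we use that $C_k^{(p,a)}$ is supersingular (it is of van der Geer--van der Vlugt type, defined by a linearized/quadratic form), so every reciprocal root of $L_{C_k^{(p,a)}}$ equals $\sqrt p$ times a root of unity, and we invoke the result on rational points of supersingular curves quoted in the abstract, which converts a bounded amount of character-sum data into the full evaluation without a direct Gauss-sum computation. The outcome is an explicit formula $\#C_k^{(p,a)}(\mathbb F_{p^n})=p^n+1+\varepsilon_k(n,a)\,p^{\,e_k(n)}$, with an explicit exponent $e_k(n)$ depending on $n$ and $\gcd(2k,n)$ and an explicit sign/root-of-unity term $\varepsilon_k(n,a)$ depending in addition on a congruence/trace condition on $a$.

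\textbf{Reading off $L$ and comparing.} Because the curve is supersingular, $n\mapsto \#C_k^{(p,a)}(\mathbb F_{p^n})-p^n-1$ is, up to the factor $p^{n/2}$, a sum of $n$-th powers of roots of unity; hence the formula of the previous step determines the whole multiset of reciprocal roots of $L_{C_k^{(p,a)}}(T)$ by finite Fourier analysis in the exponent $n$, and we obtain $L_{C_k^{(p,a)}}(T)$ as an explicit product of factors of the shape $p^{\deg\Phi_\ell}\,\Phi_\ell(T^2/p)$ with multiplicities indexed by the divisors of $2k$. With the explicit factorizations of $L_{C_k^{(p,a)}}$ and of $L_{C_{km}^{(p,a)}}$ in hand, Conjecture~\ref{conj} becomes the bookkeeping assertion that every such factor of $L_{C_k^{(p,a)}}$ occurs in $L_{C_{km}^{(p,a)}}$ with at least the same multiplicity; this holds because every divisor of $2k$ divides $2km$, the multiplicity attached to each level does not decrease under $k\mapsto km$, and the $a$-dependent terms remain compatible since $a\in\mathbb F_p$. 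The main obstacle is Step~2 — the exact evaluation of the Weil sums $S_n(t)$, uniformly in $n$ and in the arithmetic of $a$, including the correct sign and precise power of $p$ — which is exactly where the external supersingularity result does the heavy lifting; the final comparison in Step~3 is then essentially combinatorial.
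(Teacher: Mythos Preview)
Your broad strategy---exact point counts via the quadratic-form analysis of $\Tr(x^{p^k+1})$, the supersingularity result to pin down the Weil numbers as $\sqrt{p}$ times roots of unity, then a Fourier-analytic extraction of multiplicities---matches the paper's. But your Step~3 is where the sketch and the paper part ways, and where your proposal is too optimistic.

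You anticipate a clean factorisation of $L_{C_k}$ into cyclotomic-type pieces indexed by divisors of $2k$, after which divisibility is ``essentially combinatorial bookkeeping'' because divisors of $2k$ divide $2km$. The paper does not obtain such a factorisation, and its actual argument suggests the multiplicity comparison is not that clean. First, the period of $C_k$ is $4k$ when $p\mid k$ and $4kp$ when $p\nmid k$ (not $2k$), and the case split $p\mid n$ versus $p\nmid n$ in Theorem~\ref{finalck} makes the multiplicities awkward to write down in closed form. Second, the paper does \emph{not} treat all $m$ uniformly: for odd $t$ it exhibits an isomorphism $C_k\cong X_k$ with $X_k:y^p-y=x^{p^k+1}-4^{-1}$ and then an explicit covering $X_{kt}\to X_k$ via $(x,y)\mapsto(x^{(p^{kt}+1)/(p^k+1)},y)$, so $L(C_k)\mid L(C_{kt})$ follows from Kleiman--Serre without any point-count comparison. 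Only the doubling step $L(C_k)\mid L(C_{2k})$ is handled by the Fourier route, and even there the paper does not compute the multiplicities exactly: it writes $U_n=-p^{-n/2}\bigl[\#C_{2k}(\F_{p^n})-\#C_k(\F_{p^n})\bigr]$, takes the inverse DFT over the period to get the multiplicity differences $u_n$, and proves $u_n\ge 0$ by triangle-inequality estimates of the type $p^k\ge 4t$ and $p^k> (p-1)(2t-1)$, together with a separate vanishing argument when $p\mid n$. These are genuine inequalities, not divisor bookkeeping.

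So your plan is the right shape for the doubling step, but expect the comparison to require honest estimates rather than a structural ``every factor of $L_{C_k}$ reappears in $L_{C_{km}}$'' argument; and for odd multiples the paper bypasses the Fourier analysis entirely via a morphism, which is both cleaner and avoids the messy case analysis you would otherwise face.
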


The L-polynomials in the conjecture are over $\F_p$.
This conjecture was proved for $p=2$ in \cite{conj4}, so we will assume that $p$ is odd for this paper.
In Section \ref{a} we explain why we can assume $a=1$ without loss of generality.
We prove this conjecture by finding an exact expression for the number
of  $\mathbb F_{p^n}$-rational points on $C_k^{(p)}=C_k^{(p,1)}$, 
for any $n$, see Section \ref{sectck}.
This is done by first 
finding an exact expression for the number
of  $\mathbb F_{p^n}$-rational points on related curves
$B_k^{(p)}$, see Section \ref{sectbk}.
Sections \ref{sectb0} and \ref{sectc0} deal with the curves $B_0^{(p)}$
and $C_0^{(p)}$ respectively, which need separate consideration.
Section \ref{proofconj} gives the proof of Conjecture \ref{conj}.
In Section \ref{Bkmaps} we consider the corresponding divisibility 
result for the $B_k$ family.
Section \ref{opp} contains some results on the opposite problem to the conjecture;
we prove that if $k$ does not divide $\ell$ then 
the L-polynomial of $C_k^{(p)}$ does not divide the L-polynomial of $C_\ell^{(p)}$.

We will usually drop the superscript in $C_k^{(p)}=C_k^{(p,1)}$, and write $C_k$.
The trace map is always the absolute trace, unless otherwise stated.
Throughout the paper $\left(\frac{\cdot }{p}\right)$ denotes the Legendre symbol.
Finally, the L-polynomials of a more general class of curves
than $C_k$ were found in \cite{Betal}, however they were L-polynomials
over an extension of $\F_p$, and not L-polynomials over $\F_p$, which is the
subject of this paper.

\section{Background}
In this section we will give some basic facts that we will use. 
Some of this requires that $p$ is odd, some of it does not, 
but we remind that reader that we are going to assume $p$ is
odd for this paper.

\subsection{More on Curves}\label{morec}

Let $p$ be a prime and let $q=p^r$ where $r$ is a positive integer.
Let $X$ be a projective smooth absolutely irreducible curve of genus $g$ 
defined over $\mathbb{F}_q$. Let $\eta_1,\cdots,\eta_{2g}$ be the roots of the 
reciprocal of the $L$-polynomial of $X$ over 
$\mathbb F_{q}$ (sometimes called the Weil numbers of $X$, or
Frobenius eigenvalues). Then, for any $n\geq 1$, 
the number of rational points of $X$ over $\mathbb F_{q^{n}}$ is
given by
\begin{equation}\label{eqn-sum of roots}
\#X(\mathbb F_{q^{n}})=(q^{n}+1)- \sum\limits_{i=1}^{2g}\eta_i^n. 
\end{equation}
The Riemann Hypothesis for curves over finite fields  
states that $|\eta_i|=\sqrt{q}$ for all $i=1,\ldots,2g$. 
It follows immediately from this property and \eqref{eqn-sum of roots}
that
\begin{equation}
|\#X(\mathbb F_{q^n})-(q^n+1)|\leq 2g\sqrt{q^n}
\end{equation}
which is the Hasse-Weil bound.

We call $X(\mathbb F_{q})$ \emph{maximal} if $\eta_i=-\sqrt{q}$ for all $i=1,\cdots,2g$, so the Hasse-Weil upper bound is met. Equivalently, $X(\mathbb F_{q})$ is maximal
if and only if $L_X(T)=(1+\sqrt{q} T)^{2g}$.

We call $X(\mathbb F_{q})$ \emph{minimal} if $\eta_i=\sqrt{q}$ for all $i=1,\cdots,2g$,
so the Hasse-Weil lower bound is met.
Equivalently, $X(\mathbb F_{q})$ is minimal
if and only if $L_X(T)=(1-\sqrt{q} T)^{2g}$.

Note that if $X(\mathbb F_{q})$ is minimal or maximal then $q$ must be a square 
(i.e.\ $r$ must be even).

The following properties follow immediately.

\begin{prop} \label{minimal-prop}
	\begin{enumerate}
		\item If $X(\mathbb F_{q})$ is maximal then  $X(\mathbb F_{q^{n}})$ is minimal for even $n$ and maximal for odd $n$. 
		\item  If $X(\mathbb F_{q})$ is minimal then  $X(\mathbb F_{q^{n}})$ is minimal for all $n$.
	\end{enumerate}
\end{prop}

We also record another Proposition here. 

\begin{prop}\label{pureimag}
If $X$ is a curve defined over $\F_q$ and $X(\mathbb F_{q^{2n}})$ is maximal,
then $\#X(\mathbb F_{q^n})=q^n+1$ and the L-polynomial of $X$ over $\F_{q^n}$
is $(1+q^{n}t^2)^{g}$.
\end{prop}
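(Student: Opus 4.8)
The plan is to work directly with the Weil numbers $\eta_1,\dots,\eta_{2g}$ of $X$ over $\F_q$ and exploit the hypothesis that $X(\F_{q^{2n}})$ is maximal. By the characterization recalled in the excerpt, $X(\F_{q^{2n}})$ being maximal means its Weil numbers are all equal to $-\sqrt{q^{2n}}=-q^n$. But the Weil numbers of $X$ over $\F_{q^{2n}}$ are exactly $\eta_i^{2n}$ for $i=1,\dots,2g$. Hence $\eta_i^{2n}=-q^n$ for every $i$, which in particular forces $q^n$ (hence $q^{2n}$) to be a square, consistent with maximality requiring a square field size.

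Next I would pin down what $\eta_i^{2n}=-q^n$ says about $\eta_i^n$. Writing $\zeta_i=\eta_i^n/\sqrt{q^n}$ (using a fixed square root), the relation becomes $\zeta_i^2=-1$, so $\eta_i^n=\pm\sqrt{-q^n}=\pm i q^{n/2}$ up to the choice of root; more cleanly, $\eta_i^n$ is a purely imaginary number of absolute value $q^{n/2}$ — this is where the name of the proposition comes from. Then $\#X(\F_{q^n})=q^n+1-\sum_i\eta_i^n$ by \eqref{eqn-sum of roots}. The sum $\sum_i\eta_i^n$ is an integer (it is $q^n+1-\#X(\F_{q^n})$), and it is also purely imaginary since each summand is; a purely imaginary integer is $0$. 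Therefore $\#X(\F_{q^n})=q^n+1$, which is the first claim.

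For the second claim, I would compute the L-polynomial of $X$ over $\F_{q^n}$, whose reciprocal roots are the $\eta_i^n$. Since each $\eta_i^n$ satisfies $(\eta_i^n)^2=-q^n$, the $\eta_i^n$ come in conjugate pairs $\{\beta,-\beta\}$ with $\beta^2=-q^n$ — either all $2g$ of them equal a single such value paired with its negative, or they are distributed among the two square roots of $-q^n$; in every case each pair contributes a factor $(1-\beta t)(1+\beta t)=1-\beta^2 t^2 = 1+q^n t^2$ to $L_{X/\F_{q^n}}(t)=\prod_{i}(1-\eta_i^n t)$. Since there are $g$ such pairs among the $2g$ reciprocal roots, the product is $(1+q^n t^2)^g$, as claimed. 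One should check that the reciprocal roots genuinely pair up as $\beta,-\beta$: this follows because the L-polynomial has real (indeed integer) coefficients, so the multiset $\{\eta_i^n\}$ is stable under complex conjugation, and for a purely imaginary number $\beta$ conjugation is negation.

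The main obstacle is mostly bookkeeping rather than depth: being careful that "the Weil numbers over $\F_{q^{2n}}$ are the $n$-th powers... wait, the $2n$-th powers... of the Weil numbers over $\F_q$" is applied with the right exponent, and making the pairing argument for the L-polynomial airtight without assuming all $\eta_i^n$ are equal (they need not be). An alternative, perhaps cleaner, route for the second claim is to use Proposition~\ref{minimal-prop}: over $\F_{q^n}$ the curve has the property that its base change to $\F_{q^{2n}}$ is maximal, so by part~(1) read in reverse the L-polynomial over $\F_{q^n}$ must square to $L_{X/\F_{q^{2n}}}(t^2)=(1+q^n t^2)^{2g}$ up to sign normalization; extracting the square root compatible with $c_0=1$ and $\#X(\F_{q^n})=q^n+1$ (so the coefficient of $t$ vanishes) gives $(1+q^n t^2)^g$. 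I would present the Weil-number argument as the main proof and mention this as a remark.
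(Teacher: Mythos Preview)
Your argument for $\#X(\mathbb F_{q^n})=q^n+1$ is exactly the paper's: pass to Weil numbers, use maximality over $\F_{q^{2n}}$ to get $\eta_i^{2n}=-q^n$, hence $\eta_i^n$ is purely imaginary, and conclude that the integer $\sum_i\eta_i^n$ vanishes. You in fact go further than the paper, which stops there and does not write out the derivation of $L_{X/\F_{q^n}}(t)=(1+q^nt^2)^g$; your pairing argument via complex conjugation (the multiset $\{\eta_i^n\}$ is stable under conjugation, and conjugation on purely imaginary numbers is negation, forcing $g$ copies each of $\pm i\sqrt{q^n}$) is a clean and correct way to finish that second claim.
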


\begin{proof}
Let $\eta_1,\cdots,\eta_{2g}$ be the Weil numbers of $X$ over $\F_q$.
Then $\eta_j^{2n}=-\sqrt{q^{2n}}=-q^n$ for all $j$ because $X(\mathbb F_{q^{2n}})$ is maximal.
But then $\eta_j^{n}=\pm i \sqrt{q^n}$ for all $j$, which implies that 
\[
\sum\limits_{j=1}^{2g}\eta_j^n =(q^{n}+1)-\#X(\mathbb F_{q^{n}})
\]
is a purely imaginary complex number and also an integer.
This number is therefore 0.
\end{proof}


\subsection{Supersingular Curves}\label{supsing}

A curve $X$ of genus $g$ defined over $\mathbb F_q$ 
($q=p^r$) is \emph{supersingular} if any of the following 
equivalent properties hold.

\begin{enumerate}
	\item All Weil numbers of $X$ have the form $\eta_i = \sqrt{q}\cdot \zeta_i$ where $\zeta_i$ is a root of unity.
	\item The Newton polygon of $X$ is a straight line of slope $1/2$.
	\item The Jacobian of $X$ is geometrically isogenous to $E^g$ where
	$E$ is a supersingular elliptic curve.
	\item If $X$ has $L$-polynomial
	$L_X(T)=1+\sum\limits_{i=1}^{2g} c_iT^i$ 
	then
	$$ord_p(c_i)\geq \frac{ir}{2}, \ \mbox{for all $i=1,\ldots
		,2g$.}$$
\end{enumerate}

By the first property, a supersingular curve defined over $\mathbb F_q$ becomes minimal over some finite extension of $\mathbb F_q$.
Conversely, any minimal or maximal curve is supersingular.

\subsection{Quadratic forms}\label{QF}

We now recall the basic theory of quadratic forms over $\mathbb{F}_{q}$, where $q$ is odd.

Let $K=\mathbb{F}_{q^n}$, and 
let $Q:K\longrightarrow \mathbb{F}_{q}$ be a quadratic form.
The polarization of $Q$ is the symplectic bilinear form $B$ defined by $B(x,y)=Q(x+y)-Q(x)-Q(y)$. By definition the radical of $B$ (denoted $W$) is $
W =\{ x\in K : B(x,y)=0 \text{  for all $y\in K$}\}$. The rank of $B$ is defined to be $n-\dim(W)$. The rank of $Q$ is defined to be the rank of $B$.

The following result is well known, see Chapter $6$ of \cite{lidl}  for example.
\bigskip

\begin{prop}\label{counts}
	Continue the above notation. Let $N=|\{x\in K : Q(x)=0\}|$, and let $w=\dim(W)$. If $Q$ has odd rank then $N=q^{n-1}$; if $Q$ has even rank then $N=q^{n-1}\pm (q-1)q^{(n-2+w)/2}$.
\end{prop}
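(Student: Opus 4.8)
The plan is to reduce to the nondegenerate case by quotienting out the radical, and then to invoke the classical diagonalization/classification of nondegenerate quadratic forms over $\mathbb{F}_q$. First I would choose a complement $V$ to $W$ in $K$, so that $K = W \oplus V$ as $\mathbb{F}_q$-vector spaces, with $\dim V = n-w$. Since $W$ is the radical of the polarization $B$, for $x\in W$ and $y\in K$ we have $B(x,y)=0$, which forces $Q$ to be additive on $W$; combined with $Q(\lambda x)=\lambda^2 Q(x)$ this means $Q|_W$ is an $\mathbb{F}_q$-linear map $W\to\mathbb{F}_q$ (here oddness of $q$ is used, since $2$ is invertible). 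Moreover $Q(x+v)=Q(x)+Q(v)+B(x,v)=Q(x)+Q(v)$ for $x\in W$, $v\in V$, so $Q(x+v)=Q|_W(x)+Q'(v)$ where $Q'=Q|_V$ is a nondegenerate quadratic form on $V$.

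Next I would count. Fix $v\in V$. The number of $x\in W$ with $Q|_W(x) = -Q'(v)$ is $q^{w-1}$ if the linear functional $Q|_W$ is nonzero, and is $q^w$ or $0$ according to whether $-Q'(v)=0$ if $Q|_W\equiv 0$. But $Q|_W\equiv 0$ would mean $B(x,y)=0$ for $x\in W$ automatically and $Q$ vanishes on $W$; one checks this is compatible, and in that case $\operatorname{rank}(Q)=\operatorname{rank}(Q')=n-w$ has the parity recorded below and the count is handled directly. In the generic case $Q|_W\not\equiv 0$: then $N = q^{w-1}\cdot(\text{number of }v\in V)= q^{w-1}q^{n-w}=q^{n-1}$ regardless of $Q'$, and since $\dim V = n-w$, adding a nonzero linear part changes the rank parity appropriately — I would track this to confirm consistency with the stated formula. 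The cleaner route, which I would actually write up, is: if $Q|_W\equiv 0$ then $\operatorname{rank}Q = n-w$ and $N = q^w \cdot N'$ where $N'=|\{v: Q'(v)=0\}|$; if $Q|_W\not\equiv0$ then $\operatorname{rank}Q = n-w+1$ is the rank of a nondegenerate form in $n-w$ variables plus a transversal linear variable, and $N=q^{n-1}$.

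It then remains to handle the nondegenerate form $Q'$ on $V$, $\dim V = m := n-w$. Diagonalize $Q'(z_1,\dots,z_m)=\sum_{i=1}^m a_i z_i^2$ with all $a_i\neq 0$. If $m$ is odd, a standard induction on $m$ (peeling off one variable at a time, using that $z^2$ takes each nonzero value exactly twice and $0$ once) gives $N' = q^{m-1}$, hence $N = q^w q^{m-1}=q^{n-1}$, matching the odd-rank case. If $m$ is even, the same induction reduces to the binary form $a z_1^2 + b z_2^2$, whose zero count is $q$ if $-ab$ is a nonsquare and $2q-1$ if $-ab$ is a square; propagating through the induction yields $N' = q^{m-1} \pm (q-1)q^{(m-2)/2}$, the sign being the quadratic character of $(-1)^{m/2}\det(Q')$. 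Multiplying by $q^w$ gives $N = q^{n-1} \pm (q-1)q^{(n-2+w)/2}$, as claimed (noting that in the even-rank case we are necessarily in the subcase $Q|_W\equiv0$, so rank $=m$ is even consistently).

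The main obstacle — or really the only delicate point — is the bookkeeping around the radical: making sure that $Q$ genuinely splits as (linear on $W$) $\oplus$ (nondegenerate on a complement), that the choice of complement does not matter, and that the parity of the rank is tracked correctly through the two subcases $Q|_W\equiv 0$ versus $Q|_W\not\equiv 0$. Everything after that is the textbook computation for nondegenerate forms, which I would either cite from Chapter 6 of \cite{lidl} or dispatch by the short induction sketched above.
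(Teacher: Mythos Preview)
The paper does not prove this proposition at all; it simply records it as well known and cites Chapter~6 of \cite{lidl}. So there is no ``paper's proof'' to compare against---your sketch is already more than the paper offers.

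That said, your argument has one genuine confusion worth flagging. You claim that $Q|_W$ is an $\mathbb{F}_q$-linear functional, and then split into the subcases $Q|_W\equiv 0$ and $Q|_W\not\equiv 0$. But in odd characteristic $Q$ \emph{always} vanishes on the radical: from $B(x,x)=Q(2x)-2Q(x)=4Q(x)-2Q(x)=2Q(x)$ and the fact that $B(x,x)=0$ for $x\in W$, you get $Q(x)=0$ immediately. (Equivalently, additivity of $Q|_W$ together with $Q(\lambda x)=\lambda^2 Q(x)$ forces $2Q(x)=Q(2x)=4Q(x)$, hence $Q(x)=0$; it does not make $Q|_W$ linear.) So the subcase $Q|_W\not\equiv 0$ never occurs, and your claim that ``$\operatorname{rank}Q=n-w+1$'' in that subcase is both vacuous and inconsistent with the paper's definition of rank as $n-\dim W$.

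Once you drop that spurious case, the rest of your outline is exactly right and matches the standard argument: write $K=W\oplus V$, observe $Q(x+v)=Q'(v)$ with $Q'$ nondegenerate on $V$ of dimension $m=n-w$, so $N=q^w N'$; then the classical count for nondegenerate forms gives $N'=q^{m-1}$ when $m$ is odd and $N'=q^{m-1}\pm(q-1)q^{(m-2)/2}$ when $m$ is even, and multiplying by $q^w$ recovers the stated formulas. For a write-up you could either carry out the short induction you describe or just cite \cite{lidl}, as the paper does.
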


In this paper we will be concerned with quadratic forms of the type
$Q(x)=\Tr(f(x))$ where $f(x)$ has the form $\sum a_{ij} x^{p^i+p^j}$.
If $N$ is the number of $x\in \mathbb{F}_{p^n}$ with $\Tr(f(x))=0$, then
because elements of trace 0 have the form $y^p-y$,  finding $N$ is equivalent
to finding the exact number of $\mathbb{F}_{p^n}$-rational points on 
the curve $C: y^p-y=f(x)$.
Indeed, 
\begin{equation}\label{quadpts}
\#C(\mathbb F_{p^n})=pN+1.
\end{equation}

\subsection{Discrete Fourier Transform}

In this section we recall the statement of  the Discrete Fourier Transform
and its inverse.

\begin{prop}[Inverse Discrete Fourier Transform]
	Let $N$ be a positive integer and let $w_N$ be a primitive $N$-th root of unity
	over any field where $N$ is invertible.  If $$F_n=\sum_{j=0}^{N-1}f_jw_N^{-jn}$$ for $n=0,1\cdots, N-1$ then we have  $$f_n=\frac1{N}\sum_{j=0}^{N-1}F_jw_N^{jn}$$ for $n=0,1\cdots, N-1$.
\end{prop}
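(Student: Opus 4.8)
The plan is to prove the inversion formula by direct substitution followed by an interchange of the (finite) order of summation, reducing everything to the standard orthogonality relation for powers of a primitive $N$-th root of unity.

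First I would substitute the definition $F_j=\sum_{\ell=0}^{N-1}f_\ell w_N^{-\ell j}$ into the right-hand side $\frac1N\sum_{j=0}^{N-1}F_j w_N^{jn}$, obtaining the double sum $\frac1N\sum_{j=0}^{N-1}\sum_{\ell=0}^{N-1}f_\ell w_N^{-\ell j}w_N^{jn}$. Swapping the two sums and factoring out $f_\ell$ gives $\frac1N\sum_{\ell=0}^{N-1}f_\ell\left(\sum_{j=0}^{N-1}w_N^{j(n-\ell)}\right)$.

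The key step is to evaluate the inner geometric sum $S(m):=\sum_{j=0}^{N-1}w_N^{jm}$ for an integer $m$. If $w_N^m=1$ then every term equals $1$ and $S(m)=N$; if $w_N^m\neq 1$ then $S(m)=\frac{w_N^{Nm}-1}{w_N^m-1}=0$ because $w_N^N=1$. Since $w_N$ is a \emph{primitive} $N$-th root of unity, $w_N^m=1$ holds precisely when $N\mid m$, and for $m=n-\ell$ with $0\le n,\ell\le N-1$ this occurs exactly when $\ell=n$. Hence $\sum_{j=0}^{N-1}w_N^{j(n-\ell)}=N$ if $\ell=n$ and $0$ otherwise, so the double sum collapses to $\frac1N\cdot f_n\cdot N=f_n$, as required. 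Invertibility of $N$ in the field is exactly what makes the factor $\frac1N$ meaningful and the final cancellation $\frac1N\cdot N=1$ valid.

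There is no genuine obstacle here; the only points deserving care are the use of primitivity of $w_N$ to ensure the orthogonality relation is nondegenerate (that $w_N^m=1$ forces $N\mid m$), and the observation that, because both indices run over $0,\dots,N-1$, the congruence $\ell\equiv n\pmod N$ is equivalent to $\ell=n$. Note also that the forward direction is proved by the identical orthogonality computation with the two formulas interchanged, so this single calculation simultaneously justifies the transform and its inverse.
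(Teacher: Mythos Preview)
Your proof is correct and is the standard argument via orthogonality of roots of unity. Note, however, that the paper does not actually prove this proposition: it is stated in the background section as a recalled fact without proof, so there is no ``paper's own proof'' to compare against. Your argument would serve perfectly well if a proof were to be included.
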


\subsection{Relations on the Number of Rational Points}

In this section we state a theorem which allows us to find the number of 
$\F_{p^n}$-rational points of a supersingular curve by finding the the number of 
$\F_{p^m}$-rational points only for the divisors $m$ of $s$, where the Weil numbers 
are $\sqrt{p}$ times an $s$-th root of unity. Note that $s$ is even because
equality holds in the Hasse--Weil bound over $\F_{p^s}$.

\begin{thm}[\cite{MY2}]\label{reduction-thm} 	
	Let $p$ be an odd prime.
	Let $X$ be a supersingular curve of genus $g$ defined over $\mathbb F_p$ 
	whose Weil numbers are $\sqrt{p}$ times an $s$-th root of unity. 
	Let $n$ be a positive integer, let $\gcd (n,s)=m$ and write $n=mt$. Then we have\bigskip \\
	$-p^{-n/2}[\#X(\F_{p^n})-(p^n+1)]=$
	$$\begin{cases}
	-p^{-m/2}[\#X(\F_{p^m})-(p^m+1)] &\text{if } m \text{ is even},\\
		-p^{-m/2}[\#X(F_{q^m})-(q^m+1)]&\text{if } m \text{ is odd and } p\mid t,\\
	-p^{-m/2}[\#X(\F_{p^m})-(p^m+1)]\left(\frac{(-1)^{(t-1)/2}t}{p}\right)&\text{if } 
	m \text{ is odd and } p\nmid t,
	\end{cases}$$  where $\left(\frac{.}{p}\right)$ is the Legendre symbol.
\end{thm}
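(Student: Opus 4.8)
The plan is to translate the point counts into sums of roots of unity and then recognize that passing from $\#X(\F_{p^m})$ to $\#X(\F_{p^n})$ is nothing but the action of one Galois automorphism. Write the Weil numbers as $\eta_j=\sqrt p\,\zeta_j$ with $\zeta_j^s=1$, and set, using \eqref{eqn-sum of roots},
\[
a_n\;:=\;-p^{-n/2}\bigl[\#X(\F_{p^n})-(p^n+1)\bigr]\;=\;p^{-n/2}\sum_{j=1}^{2g}\eta_j^n\;=\;\sum_{j=1}^{2g}\zeta_j^n .
\]
This $a_n$ is exactly the quantity on both sides of the theorem, so it suffices to compare $a_n$ and $a_m$. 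Since $m\mid s$, each $\zeta_j^m$ is an $(s/m)$-th root of unity, so $a_m=\sum_j\zeta_j^m$ and $a_n=\sum_j(\zeta_j^m)^t$ both lie in $\mathbb Q(\zeta_{s/m})$; and since $m=\gcd(n,s)$ we have $\gcd(t,s/m)=1$, so there is a well-defined $\sigma_t\in\mathrm{Gal}(\mathbb Q(\zeta_{s/m})/\mathbb Q)$ with $\sigma_t(\zeta)=\zeta^t$ for every $(s/m)$-th root of unity $\zeta$. Applying $\sigma_t$ term by term gives
\[
a_n=\sigma_t(a_m),
\]
so the theorem reduces entirely to evaluating $\sigma_t(a_m)$ in the three cases.

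If $m$ is even then $p^{m/2}\in\mathbb Z$, hence $a_m=-p^{-m/2}[\#X(\F_{p^m})-(p^m+1)]\in\mathbb Q$, which $\sigma_t$ fixes; therefore $a_n=a_m$. (Recall $s$ is even, so $m=\gcd(n,s)$ is automatically even whenever $n$ is even, and in the even case no further splitting occurs.) This is the first case.

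Now suppose $m$ is odd. Since $s$ is even and $m$ is odd, $s/m$ is even, so $\gcd(t,s/m)=1$ forces $t$, and hence $n=mt$, to be odd. The key point is that $a_m$ admits two independent descriptions: on one hand $a_m\in\mathbb Q(\zeta_{s/m})$ as above; on the other hand $a_m=-p^{-m/2}\cdot(\text{integer})=p^{-(m-1)/2}(\text{integer})/\sqrt p\in\sqrt p\,\mathbb Q$ because $m$ is odd. If $a_m\neq 0$ these two facts force $\sqrt p\in\mathbb Q(\zeta_{s/m})$, which, by the conductor of $\mathbb Q(\sqrt p)$ (namely $p$ if $p\equiv 1\pmod 4$ and $4p$ if $p\equiv 3\pmod 4$), forces $p\mid s/m$, and moreover $4\mid s/m$ when $p\equiv 3\pmod 4$. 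In particular, if $p\mid t$ then $p\nmid s/m$, so $\sqrt p\notin\mathbb Q(\zeta_{s/m})$ and therefore $a_m=0$; then $a_n=\sigma_t(0)=0=a_m$, which is the second case. Finally suppose $p\nmid t$. If $a_m=0$ there is nothing to prove, so assume $a_m=c\sqrt p$ with $c\in\mathbb Q^\times$, whence $a_n=c\,\sigma_t(\sqrt p)$. To compute $\sigma_t(\sqrt p)$ I would bring in the quadratic Gauss sum $g=\sum_{x\in\F_p^\times}\left(\frac{x}{p}\right)\zeta_p^x$, which satisfies $g^2=\left(\frac{-1}{p}\right)p$ and, via the substitution $x\mapsto t^{-1}x$, $\sigma_t(g)=\left(\frac{t}{p}\right)g$. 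Writing $\sqrt p=\pm g$ when $p\equiv 1\pmod 4$ and $\sqrt p=\pm g/\zeta_4$ when $p\equiv 3\pmod 4$ (with $\zeta_4\in\mathbb Q(\zeta_{s/m})$ since $4\mid s/m$, and using $\sigma_t(\zeta_4)=\zeta_4^t=(-1)^{(t-1)/2}\zeta_4$), a short check in each residue class of $p$ mod $4$ yields
\[
\sigma_t(\sqrt p)=\left(\frac{(-1)^{(t-1)/2}\,t}{p}\right)\sqrt p .
\]
Hence $a_n=\left(\frac{(-1)^{(t-1)/2}t}{p}\right)a_m$, the third case, and translating the $a$'s back into point counts finishes the proof.

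The reductions $a_n=\sigma_t(a_m)$ and the even case are routine, and the Gauss sum identities are standard. The step requiring the most care is the odd case: one must play the two descriptions of $a_m$ against each other — a sum of roots of unity, hence in a cyclotomic field, versus an integer divided by $p^{m/2}$, hence a rational multiple of $\sqrt p$ — to conclude that $a_m$ either vanishes or is, up to a rational factor, a Gauss sum, and then to keep track of the sign $\left(\frac{-1}{p}\right)$ introduced when $\sqrt p$ is expressed through $\sqrt{\left(\frac{-1}{p}\right)p}$ and a fourth root of unity. That sign is exactly what produces the factor $(-1)^{(t-1)/2}$ inside the Legendre symbol.
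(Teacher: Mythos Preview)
The paper does not actually prove this theorem; it is quoted from the separate preprint \cite{MY2}, so there is no in-paper argument to compare against. That said, your proof is correct and is the natural Galois-theoretic argument one would expect in \cite{MY2}: reduce to $a_n=\sigma_t(a_m)$ with $\sigma_t\in\mathrm{Gal}(\mathbb Q(\zeta_{s/m})/\mathbb Q)$, and then evaluate $\sigma_t$ on $a_m$ using that $a_m$ is rational when $m$ is even and lies in $\sqrt p\,\mathbb Q$ when $m$ is odd.

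Two minor remarks. First, in the case $m$ odd and $p\mid t$ you in fact prove the stronger statement that both sides vanish; this is correct and is implicit in how the theorem is applied later in the paper (see e.g.\ the first line of Theorem \ref{finalck}). Second, your Gauss-sum computation of $\sigma_t(\sqrt p)$ is clean, but you should note explicitly that $\sigma_t$ acts on $g$ via $\zeta_p\mapsto\zeta_p^{\,t}$ precisely because $p\mid s/m$ (which you derived from the conductor), so that the automorphism you defined on $\mathbb Q(\zeta_{s/m})$ really does restrict to the usual $\sigma_t$ on $\mathbb Q(\zeta_p)$ and on $\mathbb Q(\zeta_4)$; you use this compatibility without quite stating it.
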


\subsection{A Divisibilty Theorem}

The following theorem is well-known.

\begin{thm}\label{chapman:KleimanSerre} 
(Kleiman--Serre)
If there is a surjective morphism of curves 
$C \longrightarrow D$ that is defined over $\mathbb{F}_q$
then $\mathrm{L}_{D}(T)$ divides $\mathrm{L}_C(T)$.
\end{thm}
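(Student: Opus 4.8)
The plan is to move from $L$-polynomials to Jacobians, where the claim becomes the statement that $\mathrm{Jac}(D)$ is, up to $\mathbb F_q$-isogeny, a factor of $\mathrm{Jac}(C)$. Recall that for a curve $X/\mathbb F_q$ of genus $g$ the $L$-polynomial $L_X(T)=\prod_{i=1}^{2g}(1-\eta_i T)$ is the reverse of the characteristic polynomial $P_X(T)=\prod_{i=1}^{2g}(T-\eta_i)$ of the $q$-power Frobenius endomorphism acting on the Tate module $T_\ell\mathrm{Jac}(X)$ for any $\ell\neq p$; since $c_0=1\neq 0$ the operation of reversing a polynomial preserves divisibility here, so it is enough to show that $P_D(T)\mid P_C(T)$.

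First I would use that a surjective, hence finite and non-constant, morphism $\phi\colon C\to D$ of smooth projective curves induces a pullback homomorphism $\phi^\ast\colon \mathrm{Jac}(D)\to\mathrm{Jac}(C)$, defined over $\mathbb F_q$, whose kernel is finite: this is immediate from the projection formula $\phi_\ast\circ\phi^\ast=[\deg\phi]$ on degree-zero divisor classes, which forces $\ker\phi^\ast\subseteq\mathrm{Jac}(D)[\deg\phi]$. Thus $B:=\mathrm{im}(\phi^\ast)$ is an abelian subvariety of $\mathrm{Jac}(C)$ defined over $\mathbb F_q$ and $\mathbb F_q$-isogenous to $\mathrm{Jac}(D)$. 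Next I would invoke Poincar\'e's complete reducibility theorem over $\mathbb F_q$: there is an abelian subvariety $A\subseteq\mathrm{Jac}(C)$, defined over $\mathbb F_q$, such that the addition map $B\times A\to\mathrm{Jac}(C)$ is an $\mathbb F_q$-isogeny. Since an $\mathbb F_q$-isogeny commutes with Frobenius and induces a Frobenius-equivariant isomorphism of rational Tate modules, the characteristic polynomials of Frobenius satisfy $P_{\mathrm{Jac}(C)}(T)=P_B(T)\,P_A(T)=P_{\mathrm{Jac}(D)}(T)\,P_A(T)$; reversing gives $L_C(T)=L_D(T)\cdot L_A^{\mathrm{rev}}(T)$ with $L_A^{\mathrm{rev}}(T)\in\mathbb Z[T]$, and hence $L_D(T)\mid L_C(T)$.

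The only non-formal ingredients here are Poincar\'e complete reducibility over the ground field $\mathbb F_q$ (equivalently, semisimplicity of the isogeny category of abelian varieties over $\mathbb F_q$) and the isogeny-invariance of the Frobenius characteristic polynomial; both are standard, but this is where the actual content lies, and I would expect that to be the main point to get right, especially since for these Artin--Schreier curves the relevant morphisms can be inseparable. Everything else — the projection formula, finiteness of $\ker\phi^\ast$, and multiplicativity of characteristic polynomials on a direct sum of Tate modules — is routine.

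As an alternative I would give a purely cohomological version of the same argument that sidesteps Poincar\'e reducibility: the pullback $\phi^\ast\colon H^1\!\big(D_{\overline{\mathbb F}_q},\mathbb Q_\ell\big)\to H^1\!\big(C_{\overline{\mathbb F}_q},\mathbb Q_\ell\big)$ on $\ell$-adic \'etale cohomology is injective, again by the projection formula $\phi_\ast\phi^\ast=[\deg\phi]$ (note $\deg\phi$ is invertible in $\mathbb Q_\ell$ as $\ell\neq p$); it is equivariant for the Frobenius action because $\phi$ is defined over $\mathbb F_q$; and the characteristic polynomial of an endomorphism restricted to an invariant subspace divides the characteristic polynomial on the ambient space. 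Since those characteristic polynomials are exactly the reverses of $L_D(T)$ and $L_C(T)$, the divisibility follows. This route trades the input from abelian variety theory for the machinery of $\ell$-adic cohomology.
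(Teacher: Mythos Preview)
Your argument is correct; both routes you outline (Poincar\'e reducibility on Jacobians, and injectivity of $\phi^\ast$ on $\ell$-adic $H^1$) are standard and valid proofs of the Kleiman--Serre theorem. Note, however, that the paper does not prove this statement at all: it is quoted as ``well-known'' and used as a black box. So there is no approach in the paper to compare against; you have simply supplied a proof where the paper supplies a citation.
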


This theorem is sometimes used to show divisibility. 
The $p=2$ case of Conjecture \ref{conj} was proved in \cite{conj4}
by finding a map $C_{km}^{(2)} \longrightarrow C_{k}^{(2)}$.
However, there are
cases where there is no map of curves and yet there is divisibility of
L-polynomials. 
We suspect that $C_k^{(p)} $ and $C_{2k}^{(p)} $ is such a case, see Theorem \ref{divisibility-2k}.
We are unable to find a map $C_{2k}^{(p)} \longrightarrow C_{k}^{(p)}$
when $p>2$.

\section{The Curve $B_0: y^p-y=x^2$ over $\mathbb F_p$}\label{sectb0}

From now on in this paper we will assume that $p$ is an odd prime.

Given a bilinear form $B$ we define
$$W^{(n)}:=\{x\in \mathbb F_{p^n} \: | \: B(x,y)=0 \text{ for all } y \in \mathbb F_{p^n} \}.$$

In this section we  will give the exact number of $\mathbb{F}_{p^n}$-rational points on 
$B_0:y^p-y=x^{2}$ for all positive integers $n$.
Note that $B_0$ has genus $(p-1)/2$.

\begin{lemma}\label{b01}
	The number of $\mathbb F_p$-rational points of $B_0$ is $p+1$.
\end{lemma}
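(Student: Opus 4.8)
The plan is to count the affine points directly and then account for the unique point at infinity. For each $x\in\mathbb F_p$ one must count the $y\in\mathbb F_p$ with $y^p-y=x^2$. The key (and essentially only) observation is that on the prime field $y^p=y$ for every $y$, so the left-hand side vanishes identically and the equation collapses to $x^2=0$. Hence $x=0$ is the only admissible value, and for that value every $y\in\mathbb F_p$ is a solution, giving exactly $p$ affine points $(0,y)$.

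Since we will need the language of Section \ref{QF} for the general case anyway, it is worth phrasing the same count that way: by \eqref{quadpts} we have $\#B_0(\mathbb F_p)=pN+1$, where $N=|\{x\in\mathbb F_p:\Tr(x^2)=0\}|$ and the trace from $\mathbb F_p$ to itself is the identity, so $N=|\{x\in\mathbb F_p:x^2=0\}|=1$. One could equally invoke Proposition \ref{counts}: the polarization of $Q(x)=x^2$ is $B(x,y)=2xy$, which is nondegenerate because $p$ is odd, so $Q$ has rank $1$ (odd) and $N=p^{\,0}=1$, giving $\#B_0(\mathbb F_p)=p+1$.

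Finally one adds the point at infinity: in the degree-$p$ Artin--Schreier cover $y^p-y=x^2$ the place above $x=\infty$ has $\gcd(2,p)=1$, so it is totally ramified and contributes a single $\mathbb F_p$-rational point. Combining, $\#B_0(\mathbb F_p)=pN+1=p+1$. There is no real obstacle here; the only points to be careful about are not to overlook the point at infinity and to record clearly that $y^p-y\equiv 0$ on $\mathbb F_p$ is precisely what forces $x=0$.
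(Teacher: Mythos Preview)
Your proof is correct and follows the same approach as the paper's own proof: use $y^p-y\equiv 0$ on $\mathbb F_p$ to reduce to $x^2=0$, get $p$ affine points, and add the point at infinity. The additional rephrasings via \eqref{quadpts} and Proposition~\ref{counts} are valid but not needed here; the paper dispatches this in one line.
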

\begin{proof} 
	Since $x^2=0$ if and only if $x=0$ and since $y^p-y=0$ for all $y \in \mathbb F_p$, we have that the number of $\mathbb F_p$-rational points of $B_0$
	(including $\infty$)  is $p+1$.
\end{proof}

\begin{lemma}\label{radical-lemma}
	Let $n$ be a positive integer. The radical of the quadratic form $Q_0(x)=\Tr(x^2)$ is $\{0\} $where $\Tr: \mathbb F_{p^n} \to \mathbb F_p$ is the trace map.
\end{lemma}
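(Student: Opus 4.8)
The plan is to compute the polarization of $Q_0$ explicitly and then reduce the claim to the nondegeneracy of the trace bilinear form on $\F_{p^n}$. First I would expand, using $\F_p$-linearity of the trace,
$$B(x,y) = Q_0(x+y) - Q_0(x) - Q_0(y) = \Tr\big((x+y)^2\big) - \Tr(x^2) - \Tr(y^2) = 2\,\Tr(xy).$$
Hence the radical is $W^{(n)} = \{x \in \F_{p^n} : 2\,\Tr(xy) = 0 \text{ for all } y \in \F_{p^n}\}$.

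Since $p$ is odd, $2$ is a unit, so $W^{(n)} = \{x \in \F_{p^n} : \Tr(xy) = 0 \text{ for all } y \in \F_{p^n}\}$, which is precisely the radical of the trace form $(x,y) \mapsto \Tr(xy)$. It then remains to recall that this form is nondegenerate. The trace map $\Tr : \F_{p^n} \to \F_p$ is not identically zero (the extension $\F_{p^n}/\F_p$ is separable, so the trace is surjective), and if $x \neq 0$ then $y \mapsto xy$ is a bijection of $\F_{p^n}$, so $\Tr(xy)$ assumes a nonzero value for some $y$. Therefore $x \in W^{(n)}$ forces $x = 0$, i.e.\ $W^{(n)} = \{0\}$.

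There is no genuine obstacle in this argument; the single ingredient is the standard nondegeneracy of the trace form, which is the same fact underlying Proposition \ref{counts}. The only point worth flagging is that invertibility of $2$ is used, so the hypothesis that $p$ is odd is essential — consistent with the standing assumption of the paper. As an immediate consequence, $Q_0$ has rank $n$ for every $n$, which is the form in which this lemma will be used together with Proposition \ref{counts} to count points on $B_0$.
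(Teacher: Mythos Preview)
Your proof is correct and follows exactly the same approach as the paper: compute the polarization $B(x,y)=2\,\Tr(xy)$ and invoke the nondegeneracy of the trace bilinear form. The paper simply asserts this nondegeneracy as well known, whereas you spell out the standard justification; otherwise the arguments are identical.
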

\begin{proof}
We have $$B_0(x,y):=Q_0(x+y)-Q_0(x)-Q_0(y)=\Tr(2xy)$$ and 
$W^{(n)}=\{0\}$
because $\Tr (xy)$ is a non-degenerate bilinear form. 
\end{proof}

\begin{lemma}\label{b02}
	The number of $\mathbb F_{p^2}$-rational points of $B_0$ is $$\begin{cases}
	p^2+1-(p-1)p &\text{ if } p \equiv 1 \mod 4,\\
	p^2+1+(p-1)p &\text{ if } p \equiv 3 \mod 4.\\
	\end{cases}$$
\end{lemma}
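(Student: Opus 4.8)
The plan is to pass from rational points on $B_0$ to zeros of the quadratic form $Q_0(x)=\Tr(x^2)$, read off the possible values of that count from Proposition~\ref{counts} using the rank computed in Lemma~\ref{radical-lemma}, and then settle the remaining sign ambiguity by a short direct count.

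First, by \eqref{quadpts} we have $\#B_0(\F_{p^2})=pN+1$, where $N=|\{x\in\F_{p^2}:Q_0(x)=0\}|$ and $\Tr:\F_{p^2}\to\F_p$ is the absolute trace. By Lemma~\ref{radical-lemma} the radical $W^{(2)}$ is trivial, so $Q_0$ has rank $2$, which is even, and hence Proposition~\ref{counts} gives $N=p\pm(p-1)$, that is $N\in\{1,\,2p-1\}$. Equivalently $\#B_0(\F_{p^2})\in\{p+1,\ 2p^2-p+1\}=\{p^2+1-(p-1)p,\ p^2+1+(p-1)p\}$, so the whole problem reduces to deciding which sign occurs for each residue class of $p$ modulo $4$.

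To fix the sign I would count the nonzero zeros of $Q_0$ directly. For $x\in\F_{p^2}$ we have $Q_0(x)=x^2+x^{2p}=t+t^p$ with $t=x^2$, so for $x\neq 0$ the condition $Q_0(x)=0$ is equivalent to $t^{p-1}=-1$. The $(p-1)$-power map on $\F_{p^2}^{\times}$ has image the unique subgroup of order $p+1$, which contains $-1$ since $p$ is odd, so $t^{p-1}=-1$ has exactly $p-1$ solutions $t\in\F_{p^2}^{\times}$. Among these, $t$ is a square in $\F_{p^2}^{\times}$ iff $t^{(p^2-1)/2}=1$; raising $t^{p-1}=-1$ to the power $(p+1)/2$ gives $t^{(p^2-1)/2}=(-1)^{(p+1)/2}$, which is $1$ exactly when $p\equiv 3\pmod 4$. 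Thus if $p\equiv 3\pmod 4$ all $p-1$ admissible values of $t$ are squares, each arising as $x^2$ for exactly two $x\in\F_{p^2}^{\times}$, so $N=2(p-1)+1=2p-1$; if $p\equiv 1\pmod 4$ none of them is a square, so the only zero is $x=0$ and $N=1$. Substituting into $\#B_0(\F_{p^2})=pN+1$ produces the two cases in the statement.

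The only step that is not routine bookkeeping is resolving the sign in Proposition~\ref{counts}, \ie deciding whether the relevant elements $t$ are squares; this is precisely the parity computation $(-1)^{(p+1)/2}$, and it is where the dichotomy $p\equiv 1$ versus $p\equiv 3\pmod 4$ enters. As a sanity check, the two possibilities are exactly ``$B_0$ is minimal over $\F_{p^2}$'' and ``$B_0$ is maximal over $\F_{p^2}$'', since $2g=p-1$ and $\sqrt{p^2}=p$; one could alternatively quote a known maximality criterion for $y^p-y=x^2$, but the direct count above keeps the argument self-contained.
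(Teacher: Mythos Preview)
Your proof is correct and follows essentially the same line as the paper's: both first use Lemma~\ref{radical-lemma} and Proposition~\ref{counts} to reduce to $N\in\{1,2p-1\}$, and both resolve the sign by analyzing when $x^{2p-2}=-1$ for $x\in\F_{p^2}^\times$, the decisive computation in each case being $(-1)^{(p+1)/2}$. The only difference is packaging: the paper phrases the sign determination as a polynomial $\gcd$ computation, $\gcd(x^{2p-2}+1,\,x^{p^2-1}-1)$, while you work directly in the cyclic group $\F_{p^2}^\times$ via the substitution $t=x^2$; your version is a bit more transparent, but the underlying arithmetic is identical.
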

\begin{proof} 
	Since $2-\dim(W^{(2)})=2-0=2$ is even by Lemma \ref{radical-lemma}, 
	the $N$ in Proposition \ref{counts} is equal to $p \pm (p-1)$.
	By \eqref{quadpts} we get 
	\[
	\#B_0(\mathbb F_{p^2})=pN+1=p^2+1\pm p(p-1).
	\]
	Because the genus of $B_0$ is $(p-1)/2$ we have $2g\sqrt{p^2}=p(p-1)$ and so
	the curve $B_0$ is maximal or minimal over $\mathbb F_{p^2}$
	because the Hasse-Weil bound is met.
	
	Let $\Tr:\mathbb F_{p^2}\to \mathbb F_p$ be the trace map. Then $$Tr(x^2)=x^2+x^{2p}=x^2(x^{2p-2}+1).$$
	
	We know that $B_0$ is maximal or minimal over $\mathbb F_{p^2}$. If it is minimal (resp. maximal), then $$|\{ x \in \mathbb F_{q^2} \: | \: Tr(x^2)=0\}|=1 \text{ (resp. $2p-1$)}.$$
	
	In other words, the degree of the greatest common divisor $(x^{2p}+x^2, x^{p^2}-x)$ is $$\begin{cases}
	1  &\text{ if } B_0 \text{ is minimal over } \mathbb F_{q^2},\\
	2p-1  &\text{ if } B_0 \text{ is maximal over } \mathbb F_{q^2}\\
	\end{cases}$$ or the degree of the greatest common divisor $(x^{2p-2}+1, x^{p^2-1}-1)$ is $$\begin{cases}
	0  &\text{ if } B_0 \text{ is minimal over } \mathbb F_{q^2},\\
	2p-2  &\text{ if } B_0 \text{ is maximal over } \mathbb F_{q^2}.\\
	\end{cases}$$ 
	
	Assume $p \equiv 1 \mod 4$. Then  $(p+1)/2$ is a positive odd integer and $$x^{p^2-1}-1=(x^{2p-2})^{(p+1)/2}-1\equiv (-1)^{(p+1)/2}-1=-2 \mod (x^{2p-2}+1)$$ which implies that $x^{2p-2}+1$ does not divide $x^{p^2-1}-1$.  Therefore, $$(x^{2p-2}+1, x^{p^2-1}-1)=1.$$
	
	Assume $p \equiv 3 \mod 4$. Then we have that $(p+1)/4$ is a positive integer and $$x^{p^2-1}-1=(x^{4p-4})^{(p+1)/4}-1$$ is divisible by $x^{4p-4}-1$ which equals to $(x^{2p-2}+1)(x^{2p-2}-1)$. Hence $x^{p^2-1}-1$ is divisible by $x^{2p-2}+1$. Therefore, $$(x^{2p-2}+1, x^{p^2-1}-1)=x^{2p-2}+1.$$
\end{proof}

\begin{thm}\label{thm-B0}
	Let $p\equiv 1 \mod 4$ and $n\ge 1$ be an integer. Then  
	$$-p^{-n/2}\left[\#B_0(\mathbb F_{p^n})-(p^n+1)\right]=\begin{cases} 0 &\text{ if $n$ is odd},\\p-1 &\text{ if $n$ is even}.  \end{cases}$$	Let $p\equiv 3 \mod 4$ and $n\ge 1$ be an integer. Then 
	$$-p^{-n/2}\left[\#B_0(\mathbb F_{p^n})-(p^n+1)\right]=\begin{cases} 0 &\text{ if } (4,n)=1,\\-(p-1) &\text{ if } (4,n)=2,\\p-1 &\text{ if } (4,n)=4.  \end{cases}$$
\end{thm}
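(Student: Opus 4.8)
The curve $B_0$ is supersingular, so the natural tool is Theorem~\ref{reduction-thm}, and the first step is to pin down the Weil numbers of $B_0$ over $\F_p$ together with the integer $s$ occurring in that theorem. By Lemma~\ref{b02} we have $\#B_0(\F_{p^2})-(p^2+1)=-(p-1)p$ when $p\equiv 1\bmod 4$ and $=+(p-1)p$ when $p\equiv 3\bmod 4$; since $2g=p-1$ and $\sqrt{p^2}=p$, this says exactly that $B_0$ is minimal over $\F_{p^2}$ in the first case and maximal over $\F_{p^2}$ in the second, and in particular supersingular. If $\eta_1,\dots,\eta_{2g}$ are the Weil numbers of $B_0$ over $\F_p$, then the $\eta_i^2$ are its Weil numbers over $\F_{p^2}$, so minimality (resp.\ maximality) over $\F_{p^2}$ forces $\eta_i^2=p$ (resp.\ $\eta_i^2=-p$) for every $i$. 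Hence each $\eta_i$ is $\sqrt p$ times a square root of unity when $p\equiv 1\bmod 4$, and $\sqrt p$ times a primitive fourth root of unity when $p\equiv 3\bmod 4$; accordingly one takes $s=2$ in Theorem~\ref{reduction-thm} when $p\equiv 1\bmod 4$ and $s=4$ when $p\equiv 3\bmod 4$.

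\textbf{Applying the reduction theorem.} Put $m=\gcd(n,s)$ and $n=mt$. Whenever $m=1$, the value $-p^{-n/2}[\#B_0(\F_{p^n})-(p^n+1)]$ is given by the second or third branch of Theorem~\ref{reduction-thm} as a (possibly signed) multiple of $-p^{-1/2}[\#B_0(\F_p)-(p+1)]$, which is $0$ by Lemma~\ref{b01}; so the value is $0$. For $p\equiv 1\bmod 4$ (with $s=2$): odd $n$ gives $m=1$ and value $0$; even $n$ gives the even modulus $m=2$, so by the first branch and Lemma~\ref{b02} the value is $-p^{-1}[\#B_0(\F_{p^2})-(p^2+1)]=p-1$. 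For $p\equiv 3\bmod 4$ (with $s=4$): $\gcd(4,n)=1$ gives $m=1$ and value $0$; $\gcd(4,n)=2$ gives the even $m=2$ and, by Lemma~\ref{b02}, value $-p^{-1}[\#B_0(\F_{p^2})-(p^2+1)]=-(p-1)$; $\gcd(4,n)=4$ gives the even $m=4$, and since $B_0$ is maximal over $\F_{p^2}$, part~1 of Proposition~\ref{minimal-prop} shows it is minimal over $\F_{p^4}=\F_{(p^2)^2}$, so $\#B_0(\F_{p^4})=p^4+1-(p-1)p^2$ and the value is $-p^{-2}[\#B_0(\F_{p^4})-(p^4+1)]=p-1$. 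Assembling these cases yields both displayed formulas.

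\textbf{Main obstacle and a shortcut.} The only point requiring any thought is the first step: reading off from the sign in Lemma~\ref{b02} whether $B_0$ is minimal or maximal over $\F_{p^2}$, and hence the correct value of $s$; everything after that is bookkeeping with $\gcd(n,s)$ and the point counts over $\F_p$, $\F_{p^2}$ and $\F_{p^4}$. In fact one can avoid Theorem~\ref{reduction-thm} altogether: from the first step each $\eta_i$ lies in $\{\pm\sqrt p\}$ (resp.\ $\{\pm i\sqrt p\}$), while $\sum_i\eta_i=(p+1)-\#B_0(\F_p)=0$ by Lemma~\ref{b01}, so there are $g$ Weil numbers of each sign; then $\#B_0(\F_{p^n})=p^n+1-\sum_i\eta_i^n=p^n+1-g\,p^{n/2}\bigl(\epsilon^n+(-\epsilon)^n\bigr)$ with $\epsilon=1$ (resp.\ $\epsilon=i$), and the stated formulas fall out on inspecting $n\bmod 2$ (resp.\ $n\bmod 4$).
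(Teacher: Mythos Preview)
Your main argument is exactly the paper's intended one: invoke Lemma~\ref{b01}, Lemma~\ref{b02}, and Theorem~\ref{reduction-thm}, with the only extra work being to make explicit the value of $s$ (namely $s=2$ when $p\equiv 1\bmod 4$ and $s=4$ when $p\equiv 3\bmod 4$), which you deduce correctly from the minimality/maximality of $B_0$ over $\F_{p^2}$ and Proposition~\ref{minimal-prop}. Your alternative ``shortcut'' at the end---using $\sum_i\eta_i=0$ from Lemma~\ref{b01} to conclude that the Weil numbers split evenly between the two possible values and then summing $\eta_i^n$ directly---is a genuinely more elementary route that bypasses Theorem~\ref{reduction-thm} entirely for this particular curve; it works cleanly here because $B_0$ has so few distinct Weil numbers, whereas the paper's approach via the reduction theorem is the one that scales to the later curves $B_k$ and $C_k$.
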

\begin{proof}
	It follows by Lemma \ref{b01}, \ref{b02} and Theorem \ref{reduction-thm}.
\end{proof}

\section{The Curve $C_0: y^p-y=x^2+x$ over $\mathbb F_p$}\label{sectc0}

In this section we will give the exact number of $\mathbb{F}_{p^n}$-rational points on $C_0: y^p-y=x^2+x$  for all positive integer $n$.

Let $n \ge 1$ be a positive integer. The map $(x, y)\to (x-2^{-1},y)$ is a one-to-one map over $\mathbb F_{p^n}^2$. Let $\Tr:\mathbb F_{p^n}\to \mathbb F_p$ be the trace map.  Since $$\Tr\left( (x-2^{-1})^2+x\right)=\Tr\left( x^2+4^{-1}\right)=\Tr(x^2)+n4^{-1},$$ we can use the information on the curve $B_0$.
\begin{lemma}\label{C0p}
	Let $n$ be a positive integer. The number of $\mathbb F_{p^{pn}}$-rational points of $C_0$ equals  the number of $\mathbb F_{p^{pn}}$-rational points of $B_0$.
\end{lemma}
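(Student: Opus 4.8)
The plan is to convert the point-counts on both curves into quadratic-form counts and then observe that, because the extension degree $pn$ is divisible by $p$, the trace of the constant appearing in the change of variable vanishes.

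First I would apply \eqref{quadpts} to both curves over $\F_{p^{pn}}$. Writing $\Tr\colon\F_{p^{pn}}\to\F_p$ for the trace map, we have $\#C_0(\F_{p^{pn}})=pN_{C_0}+1$ and $\#B_0(\F_{p^{pn}})=pN_{B_0}+1$, where $N_{C_0}=|\{x\in\F_{p^{pn}} : \Tr(x^2+x)=0\}|$ and $N_{B_0}=|\{x\in\F_{p^{pn}} : \Tr(x^2)=0\}|$. Hence it suffices to show $N_{C_0}=N_{B_0}$. For this I would use the bijection $x\mapsto x-2^{-1}$ of $\F_{p^{pn}}$ together with the identity displayed just before the lemma, applied with $n$ replaced by $pn$: $\Tr\big((x-2^{-1})^2+x\big)=\Tr(x^2)+(pn)\,4^{-1}$. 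Since $4^{-1}\in\F_p$, its trace over $\F_{p^{pn}}$ is $(pn)\,4^{-1}$, and $pn\equiv 0\pmod p$, so this term is $0$ in $\F_p$. Thus $\Tr\big((x-2^{-1})^2+x\big)=\Tr(x^2)$ for all $x$, so $x\mapsto x-2^{-1}$ carries the zero set of $\Tr(x^2+x)$ bijectively onto the zero set of $\Tr(x^2)$, giving $N_{C_0}=N_{B_0}$ and therefore $\#C_0(\F_{p^{pn}})=\#B_0(\F_{p^{pn}})$.

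There is no real obstacle here; the one bookkeeping point is that the single point at infinity must be accounted for identically on both curves. This is automatic: $B_0$ and $C_0$ are Artin--Schreier curves whose defining polynomial has degree $2$, which is prime to $p$, so each has exactly one rational point at infinity, and that point is precisely the ``$+1$'' in \eqref{quadpts}. The essential content of the lemma is just that $p\mid pn$ forces the trace of the constant $4^{-1}$ over $\F_{p^{pn}}$ to be zero.
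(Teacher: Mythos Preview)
Your proposal is correct and follows exactly the paper's approach: both invoke the change of variable $x\mapsto x-2^{-1}$ set up in the paragraph preceding the lemma and use that $p\mid pn$ forces the constant term $(pn)4^{-1}$ to vanish. Your version simply spells out in detail (via \eqref{quadpts} and the explicit bijection of zero sets) what the paper compresses into the single line ``$\Tr(x^2+x)=\Tr(x^2)$''.
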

\begin{proof} 
	Let $\Tr:\mathbb F_{p^{pn}}\to \mathbb F_p$ be the trace map. Since $p\cdot n$ is divisible by $p$, we have $$Tr(x^2+x)=Tr(x^2).$$ Hence we have the result.
\end{proof}

\begin{lemma}\label{C0B0np1}
	Let $n$ be a positive integer with $(n,p)=1$. If $\#B_0(\mathbb F_{p^n})-(p^n+1)\not= 0$
	then
	$$-(p-1)\bigg(\#C_0(\mathbb F_{p^n})-(p^n+1)\bigg)=
	\bigg( \#B_0(\mathbb F_{p^n})-(p^n+1)\bigg).$$
\end{lemma}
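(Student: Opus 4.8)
The plan is to compute both $\#C_0(\mathbb F_{p^n})$ and $\#B_0(\mathbb F_{p^n})$ directly from the quadratic-form count in Proposition~\ref{counts}, using the change of variables $x \mapsto x - 2^{-1}$ already set up before Lemma~\ref{C0p}. That substitution gives $\Tr\big((x-2^{-1})^2 + x\big) = \Tr(x^2) + n4^{-1}$, so if we write $Q_0(x) = \Tr(x^2)$ and $c = n4^{-1} \in \mathbb F_p$, then counting $\mathbb F_{p^n}$-points on $C_0$ is equivalent to counting $x \in \mathbb F_{p^n}$ with $Q_0(x) = -c$, whereas counting points on $B_0$ is counting $x$ with $Q_0(x) = 0$. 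Since $(n,p) = 1$, we have $c \neq 0$.

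First I would record the rank situation: by Lemma~\ref{radical-lemma} the radical of $Q_0$ over $\mathbb F_{p^n}$ is $\{0\}$, so $Q_0$ has full rank $n$. The hypothesis $\#B_0(\mathbb F_{p^n}) - (p^n+1) \neq 0$ forces, via \eqref{quadpts} and Proposition~\ref{counts}, that the number $N_0 = |\{x : Q_0(x) = 0\}|$ is \emph{not} $p^{n-1}$; hence $Q_0$ has even rank, so $n$ is even and $w = 0$. Then Proposition~\ref{counts} gives $N_0 = p^{n-1} + \varepsilon (p-1)p^{(n-2)/2}$ for some sign $\varepsilon = \pm 1$, and therefore $\#B_0(\mathbb F_{p^n}) - (p^n+1) = p(N_0) + 1 - (p^n+1) = \varepsilon (p-1) p^{n/2}$.

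Next I would count the points of $C_0$ using the standard formula for the number of solutions of $Q(x) = b$ with $b \neq 0$ for a nondegenerate even-rank quadratic form: this count is $p^{n-1} - \varepsilon p^{(n-2)/2}$ (the same sign $\varepsilon$, but the $(p-1)$ factor is replaced by $-1$). This is the $b \neq 0$ companion to Proposition~\ref{counts}; it can be derived from the same character-sum computation (or cited from Chapter~6 of \cite{lidl}). Plugging in via \eqref{quadpts} gives $\#C_0(\mathbb F_{p^n}) - (p^n+1) = p\big(p^{n-1} - \varepsilon p^{(n-2)/2}\big) + 1 - (p^n+1) = -\varepsilon p^{n/2}$. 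Comparing the two expressions yields $\#B_0(\mathbb F_{p^n}) - (p^n+1) = \varepsilon(p-1)p^{n/2} = -(p-1)\big(\#C_0(\mathbb F_{p^n}) - (p^n+1)\big)$, which is the claimed identity.

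The main obstacle is the $b \neq 0$ counting formula: the paper has only stated Proposition~\ref{counts} for the value $0$, so I would need either to invoke the analogous classical statement for nonzero values or to supply the short Gauss-sum argument. One has to be careful that the sign appearing in the $b\neq 0$ count is the \emph{same} $\varepsilon$ as in the $b = 0$ count (this is true: both are governed by whether the discriminant of $Q_0$, up to squares, is $(-1)^{n/2}$), and that the value $c = n4^{-1}$ being nonzero in $\mathbb F_p$ is exactly what $(n,p)=1$ guarantees — note that $\Tr(c) = nc = n^2 4^{-1} \neq 0$, so indeed $Q_0$ takes the value $-c$ in the "generic" stratum and the nonzero-value formula applies. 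No dependence on whether $c$ is a square is needed, since the solution count of $Q(x) = b$ for even-rank nondegenerate $Q$ depends only on $b \neq 0$, not on the square class of $b$.
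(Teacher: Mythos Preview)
Your proof is correct and follows essentially the same route as the paper, which simply defers to the proof of Lemma~\ref{evenspread}. The one packaging difference: the paper invokes the equidistribution of nonzero values of a quadratic form (\cite[Theorem~6.26]{lidl}) to obtain $N_0 + (p-1)N_1 = p^n$ directly, whereas you first deduce that $n$ is even from the hypothesis and then quote the explicit $b\neq 0$ count $p^{n-1} - \varepsilon p^{(n-2)/2}$; these are equivalent, and the equidistribution phrasing avoids the parity case-split and the need to match the sign $\varepsilon$ across the two formulas.
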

\begin{proof}
	The proof of this lemma is exactly the same as that of Lemma \ref{evenspread}.
	\end{proof}

\begin{lemma}\label{C01}
	The number $\#C_0(\mathbb F_p)$ is $2p+1$.
\end{lemma}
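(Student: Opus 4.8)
The plan is to count directly the number of $\mathbb{F}_p$-rational affine points of $C_0 : y^p - y = x^2 + x$, then add $1$ for the point at infinity. First I would observe that for every $x \in \mathbb{F}_p$ the right-hand side $x^2 + x$ lies in $\mathbb{F}_p$, and for any $c \in \mathbb{F}_p$ the equation $y^p - y = c$ has exactly $p$ solutions $y \in \mathbb{F}_p$ when $c = 0$ (namely all of $\mathbb{F}_p$, since $y^p = y$ there) and no solutions when $c \neq 0$. Hence the number of affine points equals $p$ times the number of $x \in \mathbb{F}_p$ with $x^2 + x = 0$.

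Next I would solve $x^2 + x = x(x+1) = 0$ over $\mathbb{F}_p$: since $p$ is odd, the roots $x = 0$ and $x = -1$ are distinct, so there are exactly $2$ such values of $x$. This gives $2p$ affine points. Adding the single point at infinity yields $\#C_0(\mathbb{F}_p) = 2p + 1$, as claimed. Alternatively, one could route through the already-established material: by the substitution preceding Lemma \ref{C0p} one has $\#C_0(\mathbb{F}_{p^n}) = 1 + p\,|\{x \in \mathbb{F}_{p^n} : \Tr(x^2) = -n/4\}|$, and for $n = 1$ the condition becomes $x^2 = -1/4$, i.e. $(2x)^2 = -1$; combined with Lemma \ref{b01} (which records $\#B_0(\mathbb{F}_p) = p+1$, i.e. $x^2 = 0$ has one solution) this again pins down the count, the answer depending only on whether $-1$ is a square mod $p$ — but in fact the direct factorization $x(x+1)$ sidesteps any Legendre-symbol case distinction entirely.

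There is essentially no obstacle here: the only thing to be careful about is that $p$ is odd, so that $0$ and $-1$ are genuinely two distinct roots (for $p = 2$ they would coincide and the count would differ), and that we do not forget the point at infinity on the smooth model. I would therefore present the short direct argument rather than the detour through $B_0$.
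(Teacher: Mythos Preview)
Your direct argument is correct and is exactly the paper's proof: factor $x^2+x=x(x+1)$, obtain the two distinct roots $0$ and $-1$ (using that $p$ is odd), multiply by $p$ for the $y$-fibre, and add the point at infinity. One small slip in your discarded aside: after the shift $x\mapsto x-2^{-1}$ the trace condition is $\Tr(x^2)=n/4$ (not $-n/4$), so for $n=1$ one needs $x^2=1/4$, which always has the two solutions $x=\pm 2^{-1}$ and no Legendre symbol appears.
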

\begin{proof}
	We have  $y^p-y=0$ for all $y \in \mathbb F_p$. Also
	$x^2+x=x(x+1)=0$ if and only if $x=0$ or $x=-1$.
	 Therefore, $\#C_0(\mathbb F_p)=2\cdot p+1$.
\end{proof}

\begin{lemma}\label{C0np1}
	 $C_0(\mathbb F_{p^{2p}})$ is minimal   if $p\equiv 1 \mod 4$ and maximal $p\equiv 3 \mod 4$.
\end{lemma}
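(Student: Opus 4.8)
The plan is to reduce the statement to the already-established point counts for $B_0$. First I would apply Lemma~\ref{C0p} with $n=2$, which gives
$$\#C_0(\F_{p^{2p}})=\#B_0(\F_{p^{2p}}),$$
so it is enough to determine $\#B_0(\F_{p^{2p}})$. For this I would invoke Theorem~\ref{thm-B0} with exponent $2p$. Since $p$ is odd, $2p$ is even, and in fact $2p\equiv 2\pmod 4$, so $(4,2p)=2$. Reading off the appropriate branch of Theorem~\ref{thm-B0} then yields
$$-p^{-p}\left[\#B_0(\F_{p^{2p}})-(p^{2p}+1)\right]=\begin{cases} p-1 & \text{if }p\equiv 1\pmod 4,\\ -(p-1) & \text{if }p\equiv 3\pmod 4,\end{cases}$$
equivalently $\#C_0(\F_{p^{2p}})=p^{2p}+1-(p-1)p^{p}$ when $p\equiv 1\pmod 4$, and $\#C_0(\F_{p^{2p}})=p^{2p}+1+(p-1)p^{p}$ when $p\equiv 3\pmod 4$.

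To finish I would observe that $C_0$ is the Artin--Schreier curve $y^p-y=f(x)$ with $\deg f=2$ coprime to $p$, so it has genus $g=(p-1)/2$, the same as $B_0$. Hence $2g\sqrt{p^{2p}}=(p-1)p^{p}$, and the two values computed above are exactly the Hasse--Weil lower bound (when $p\equiv 1\pmod 4$) and upper bound (when $p\equiv 3\pmod 4$) for a genus-$g$ curve over $\F_{p^{2p}}$. By the characterisations of minimal and maximal curves recalled in Section~\ref{morec} (equality in the Hasse--Weil lower, resp.\ upper, bound forces every Weil number to equal $+\sqrt{p^{2p}}$, resp.\ $-\sqrt{p^{2p}}$), $C_0(\F_{p^{2p}})$ is minimal when $p\equiv 1\pmod 4$ and maximal when $p\equiv 3\pmod 4$.

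There is no serious obstacle here: the argument is essentially bookkeeping. The only points that require care are correctly identifying the residue of $2p$ modulo $4$ so that the right branch of Theorem~\ref{thm-B0} is used, keeping track of the sign of $-p^{-p}[\#C_0(\F_{p^{2p}})-(p^{2p}+1)]$, and recording that $C_0$ and $B_0$ have the same genus $(p-1)/2$ — this last point is what makes the Hasse--Weil bound be attained exactly, which is what upgrades the numerical coincidence to minimality/maximality.
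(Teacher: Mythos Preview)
Your proof is correct and follows essentially the same approach as the paper: invoke Lemma~\ref{C0p} to identify $\#C_0(\F_{p^{2p}})$ with $\#B_0(\F_{p^{2p}})$, then use Theorem~\ref{thm-B0} to read off the point count and recognise the Hasse--Weil extremes. The paper's proof is simply the two-line version of what you wrote out in full.
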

\begin{proof} 
	By Lemma \ref{C0p} we know that  $\#C_0(\mathbb F_{p^{2p}})=\#B_0(\mathbb F_{p^{2p}})$. Hence it follows by Theorem \ref{thm-B0}.
\end{proof}

We put all these results together in the final Theorem of this section.

\begin{thm}
	Let $p\equiv 1 \mod 4$ and $n\ge 1$ be an integer. Then we have that$$-p^{-n/2}\left[\#C_0(\mathbb F_{p^n})-(p^n+1)\right]=\begin{cases} -\left(\frac{n}{p}\right) \sqrt{p}&\text{ if } (n,2p)=1,\\ -1 &\text{ if } (n,2p)=2,\\ 0 &\text{ if } (n,2p)=p,\\ p-1 &\text{ if } (n,2p)=2p.\end{cases}$$
	Let $p\equiv 3 \mod 4$ and $n\ge 1$ be an integer. Then we have that $$-p^{-n/2}\left[\#C_0(\mathbb F_{p^n})-(p^n+1)\right]=\begin{cases} -\left(\frac{(-1)^{(n-1)/2}n}{p}\right) \sqrt{p}&\text{ if } (n,4p)=1,\\ 1 &\text{ if } (n,4p)=2,\\ -1 &\text{ if } (n,4p)=4,\\ 0 &\text{ if } (n,4p)=p,\\ -(p-1) &\text{ if } (n,4p)=2p,\\p-1 &\text{ if } (n,4p)=4p.\end{cases}$$	
\end{thm}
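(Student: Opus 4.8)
The plan is to combine Theorem \ref{reduction-thm} with the point counts already established for $B_0$, together with the three auxiliary lemmas of this section. First I would note that $C_0$ is supersingular: by Lemma \ref{C0np1} the curve $C_0$ is minimal or maximal over $\mathbb{F}_{p^{2p}}$, and a minimal or maximal curve is supersingular, so every Weil number $\eta_i$ of $C_0$ over $\mathbb{F}_p$ has the form $\sqrt{p}\,\zeta_i$ with $\zeta_i$ a root of unity. Next I would pin down a usable value of the exponent $s$. If $p\equiv 1\bmod 4$, Lemma \ref{C0np1} says $C_0(\mathbb{F}_{p^{2p}})$ is minimal, so $\eta_i^{2p}=p^{p}$ and hence $\zeta_i^{2p}=1$; here we may take $s=2p$. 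If $p\equiv 3\bmod 4$, Lemma \ref{C0np1} says $C_0(\mathbb{F}_{p^{2p}})$ is maximal, so $\eta_i^{2p}=-p^{p}$, whence $\zeta_i^{2p}=-1$ and $\zeta_i^{4p}=1$; the value $s=2p$ now fails (the sign is wrong), and we take $s=4p$.

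With $s$ so chosen, Theorem \ref{reduction-thm} applies: for any $n$, writing $m=\gcd(n,s)$ and $n=mt$, the quantity $-p^{-n/2}\bigl[\#C_0(\mathbb{F}_{p^n})-(p^n+1)\bigr]$ equals $-p^{-m/2}\bigl[\#C_0(\mathbb{F}_{p^m})-(p^m+1)\bigr]$ when $m$ is even, and also when $m$ is odd with $p\mid t$; and it equals that same quantity multiplied by $\left(\frac{(-1)^{(t-1)/2}t}{p}\right)$ when $m$ is odd with $p\nmid t$. The divisors of $s=2p$ are exactly $1,2,p,2p$, which are precisely the four values of $\gcd(n,2p)$ appearing in the statement for $p\equiv 1\bmod 4$; the divisors of $s=4p$ are $1,2,4,p,2p,4p$, precisely the six values of $\gcd(n,4p)$ appearing for $p\equiv 3\bmod 4$. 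So the theorem reduces to evaluating $-p^{-m/2}\bigl[\#C_0(\mathbb{F}_{p^m})-(p^m+1)\bigr]$ at each such divisor $m$, inserting the Legendre twist only where $m$ is odd and $p\nmid t$. Note that when $m=p$ the value will turn out to be $0$, so the twist is harmless there (and the ambiguous branch $m=p$, $p\mid t$ causes no trouble); when $m=1$ one has $\gcd(n,s)=1$, hence $p\nmid n=t$, so the third branch always applies there.

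For the evaluations I would argue as follows. For $m=1$, Lemma \ref{C01} gives $\#C_0(\mathbb{F}_p)=2p+1$, so $-p^{-1/2}\bigl[\#C_0(\mathbb{F}_p)-(p+1)\bigr]=-\sqrt{p}$; applying the twist with $t=n$ gives $-\left(\frac{(-1)^{(n-1)/2}n}{p}\right)\sqrt{p}$, which collapses to $-\left(\frac{n}{p}\right)\sqrt{p}$ exactly when $p\equiv 1\bmod 4$ since then $\left(\frac{-1}{p}\right)=1$. For $m\in\{p,2p,4p\}$, Lemma \ref{C0p} gives $\#C_0(\mathbb{F}_{p^m})=\#B_0(\mathbb{F}_{p^m})$, so the required value is read straight off Theorem \ref{thm-B0}: it is $0$ for $m=p$ (as $p$ is odd), and $p-1$ or $-(p-1)$ for $m=2p,4p$ according to the residue of $p$ modulo $4$. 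For $m=2$, and also for $m=4$ in the case $p\equiv 3\bmod 4$, one has $\gcd(m,p)=1$ and $\#B_0(\mathbb{F}_{p^m})\ne p^m+1$ (by Lemma \ref{b02} and Theorem \ref{thm-B0} the deviation is $\pm(p-1)p^{m/2}\ne 0$), so Lemma \ref{C0B0np1} applies and yields
\[
-p^{-m/2}\bigl[\#C_0(\mathbb{F}_{p^m})-(p^m+1)\bigr]=-\frac{1}{p-1}\Bigl(-p^{-m/2}\bigl[\#B_0(\mathbb{F}_{p^m})-(p^m+1)\bigr]\Bigr);
\]
substituting the value $\pm(p-1)$ from Theorem \ref{thm-B0} produces the claimed $\mp 1$. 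Assembling these values over the divisors of $s$ gives the two tables in the statement.

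The computation is essentially mechanical once the inputs are in place, so I do not expect a serious obstacle; the one point requiring genuine care is the choice of $s$ — recognizing that $s=2p$ is correct for $p\equiv 1\bmod 4$ but must be replaced by $s=4p$ when $p\equiv 3\bmod 4$ (otherwise the case $\gcd(n,4p)=4$ would be mishandled) — together with the bookkeeping of the Legendre twist, namely confirming that it is active only for $\gcd(n,s)=1$, is vacuous for $\gcd(n,s)=p$, and simplifies as stated when $p\equiv 1\bmod 4$.
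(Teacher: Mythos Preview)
Your argument is correct and follows exactly the same approach as the paper's proof, which is simply a one-line appeal to Lemmas \ref{C0p}, \ref{C0B0np1}, \ref{C01}, \ref{C0np1} together with Theorems \ref{thm-B0} and \ref{reduction-thm}. You have merely made explicit the choice of $s$ and the case-by-case evaluations that the paper leaves implicit.
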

\begin{proof}
It follows by Lemmas \ref{C0p}, \ref{C0B0np1}, \ref{C01}, \ref{C0np1} and Theorems \ref{thm-B0} and \ref{reduction-thm}.
\end{proof}

\section{The Curve $B_k: y^p-y=x^{p^k+1}$ over $\mathbb F_p$}\label{sectbk}

In this section we  will give the exact number of $\mathbb{F}_{p^n}$-rational points on 
$B_k^{(p)}=B_k: y^p-y=x^{p^k+1}$  for all positive integers $k$ and $n$.

\begin{lemma}
	Let $d\mid k$. The number of $\mathbb F_{p^d}$-rational points of $B_k$ is 
	equal to the number of $\mathbb F_{p^d}$-rational points of $B_0$.
\end{lemma}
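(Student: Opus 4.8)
The plan is to reduce the point count of $B_k$ over $\mathbb{F}_{p^d}$ to that of $B_0$ by exploiting the fact that $p^k \equiv 1 \pmod{p^d-1}$ when $d \mid k$. Concretely, by \eqref{quadpts} it suffices to compare the number of $x \in \mathbb{F}_{p^d}$ with $\Tr(x^{p^k+1})=0$ to the number with $\Tr(x^2)=0$, where $\Tr : \mathbb{F}_{p^d} \to \mathbb{F}_p$. First I would observe that for $x \in \mathbb{F}_{p^d}$ we have $x^{p^d}=x$, and since $d \mid k$ also $x^{p^k}=x$. Hence $x^{p^k+1}=x^{2}$ as functions on $\mathbb{F}_{p^d}$, so the quadratic forms $Q_k(x)=\Tr(x^{p^k+1})$ and $Q_0(x)=\Tr(x^2)$ literally coincide on $\mathbb{F}_{p^d}$. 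Therefore the zero sets have the same cardinality $N$, and by \eqref{quadpts} we get $\#B_k(\mathbb{F}_{p^d}) = pN+1 = \#B_0(\mathbb{F}_{p^d})$.

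The key steps in order are: (1) note $d \mid k \Rightarrow p^k \equiv 1 \pmod{p^d - 1}$, so that raising to the $p^k$-th power is the identity on $\mathbb{F}_{p^d}^{\times}$ (and trivially on $0$); (2) conclude $x^{p^k+1} = x \cdot x^{p^k} = x \cdot x = x^2$ for every $x \in \mathbb{F}_{p^d}$; (3) deduce that the affine curves $y^p - y = x^{p^k+1}$ and $y^p - y = x^2$ have the same $\mathbb{F}_{p^d}$-rational affine points, and both have a single point at infinity, giving equality of the full counts. Alternatively one may phrase (3) via Proposition \ref{counts} and \eqref{quadpts} as above, which makes the bookkeeping of the point at infinity automatic.

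There is essentially no obstacle here; the only point requiring a word of care is the behaviour at $x=0$ and at the point at infinity. At $x=0$ both sides of $x^{p^k+1}=x^2$ vanish, so there is no issue. At infinity, both $B_k$ and $B_0$ are Artin--Schreier curves of the form $y^p-y=f(x)$ with $f$ a polynomial, and such a curve has exactly one place above $x=\infty$ (totally ramified), contributing one rational point in either case. So the affine equality extends to an equality of the total number of $\mathbb{F}_{p^d}$-rational points, which is the claim.
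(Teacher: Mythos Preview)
Your proof is correct and follows essentially the same approach as the paper: the key observation is that $x^{p^k}=x$ for all $x\in\mathbb{F}_{p^d}$ when $d\mid k$, so $x^{p^k+1}=x^2$ on $\mathbb{F}_{p^d}$ and the two curves coincide over that field. The paper states this in one line, while you have added the (harmless) extra justification about the point at infinity and the translation via \eqref{quadpts}.
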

\begin{proof}
	Since $x^{p^k+1}=x^{p^k}\cdot x=x\cdot x=x^2$ in $\mathbb F_{p^d}$ for all $d\mid k$, the result is immediate.
\end{proof}

\begin{lemma}
	Let $d\mid k$ with $2d \nmid k$. The number of $\mathbb F_{p^{2d}}$-rational points of $B_k$ is $(p^{2d}+1)-(p-1)p^{d}$.
\end{lemma}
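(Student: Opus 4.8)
The plan is to reduce, over $\mathbb F_{p^{2d}}$, to the exponent $p^d+1$, and then evaluate the resulting point count directly via the relative norm $\mathbb F_{p^{2d}}^*\to\mathbb F_{p^d}^*$. First I would note that the hypotheses $d\mid k$ and $2d\nmid k$ say precisely that $k\equiv d\pmod{2d}$: writing $k=ds$, the condition $2d\nmid k$ forces $s$ to be odd. Since $x\mapsto x^{p^d}$ is an involution of $\mathbb F_{p^{2d}}$, an odd power of it is again $x\mapsto x^{p^d}$, so $x^{p^k+1}=x^{p^d+1}$ for every $x\in\mathbb F_{p^{2d}}$. Hence, by \eqref{quadpts}, it suffices to show that $N:=|\{x\in\mathbb F_{p^{2d}}:\Tr(x^{p^d+1})=0\}|$ equals $p^{2d-1}-(p-1)p^{d-1}$, where $\Tr\colon\mathbb F_{p^{2d}}\to\mathbb F_p$; then $\#B_k(\mathbb F_{p^{2d}})=pN+1=(p^{2d}+1)-(p-1)p^d$.

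The key step is to identify $x^{p^d+1}$ with the relative norm $\nu(x)=N_{\mathbb F_{p^{2d}}/\mathbb F_{p^d}}(x)$, which lies in $\mathbb F_{p^d}$, satisfies $\nu^{-1}(0)=\{0\}$, and maps $\mathbb F_{p^{2d}}^*$ onto $\mathbb F_{p^d}^*$ with every fibre of size $p^d+1$. Expanding the absolute trace through the tower $\mathbb F_p\subset\mathbb F_{p^d}\subset\mathbb F_{p^{2d}}$ and using $\Tr_{\mathbb F_{p^{2d}}/\mathbb F_{p^d}}(c)=2c$ for $c\in\mathbb F_{p^d}$, the condition $\Tr(x^{p^d+1})=0$ becomes $\Tr_{\mathbb F_{p^d}/\mathbb F_p}(2\nu(x))=0$. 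Since $p$ is odd, the set $\{t\in\mathbb F_{p^d}:\Tr_{\mathbb F_{p^d}/\mathbb F_p}(2t)=0\}$ of admissible norm values is an $\mathbb F_p$-hyperplane of $\mathbb F_{p^d}$, of size $p^{d-1}$, and it contains $0$. Counting $x=0$ separately and distributing the remaining $p^{d-1}-1$ admissible values over norm fibres of size $p^d+1$ gives $N=1+(p^{d-1}-1)(p^d+1)=p^{2d-1}-(p-1)p^{d-1}$, as required. This is the minus-sign case of Proposition~\ref{counts}: the polarization of $Q(x)=\Tr(x^{p^d+1})$ is $B(x,y)=2\Tr(x^{p^d}y)$, so $Q$ has trivial radical and even rank $2d$, which is consistent with the formula just obtained.

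The only delicate point is the reduction: one must be sure that $2d\nmid k$ is exactly the statement that $k/d$ is odd, so that $x^{p^k+1}$ collapses to $x^{p^d+1}$ on $\mathbb F_{p^{2d}}$; everything afterwards is a short, character-free fibre count. As an alternative to that fibre count, one could instead evaluate the Gauss sum $\sum_{x\in\mathbb F_{p^{2d}}}\psi(\Tr(x^{p^d+1}))$ for a nontrivial additive character $\psi$ of $\mathbb F_p$, which by the same norm bookkeeping equals $-p^d$ and thereby fixes the sign in Proposition~\ref{counts} directly; but the count above is more elementary and self-contained.
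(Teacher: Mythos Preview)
Your argument is correct and follows essentially the same route as the paper: reduce the exponent $p^k+1$ to $p^d+1$ over $\mathbb F_{p^{2d}}$, then count via the norm map $x\mapsto x^{p^d+1}$ from $\mathbb F_{p^{2d}}^*$ onto $\mathbb F_{p^d}^*$ with fibres of size $p^d+1$. The only cosmetic difference is in the reduction: the paper passes through the tower $\Tr_{2d}=\Tr_d\circ\Tr_{\mathbb F_{p^{2d}}/\mathbb F_{p^d}}$ and manipulates exponents inside $\Tr_d$ to arrive at $\Tr_d(2x^{p^d+1})$, whereas you observe more directly that $x^{p^k}=x^{p^d}$ as elements of $\mathbb F_{p^{2d}}$ (since $k\equiv d\pmod{2d}$), so $x^{p^k+1}=x^{p^d+1}$ already before taking any trace. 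Both then finish with the identical fibre count $N=1+(p^{d-1}-1)(p^d+1)$.
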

\begin{proof}
	Since $d\mid k$ and $2d\nmid k$, we have $e:= k/d$ is odd. Define 
	$\Tr_n: \mathbb F_{p^n} \to \F_p$ be the trace map. 
	We have \begin{align*}
	\Tr_{2d}(x^{p^k+1})&=\Tr_d(x^{p^k+1}+x^{p^{k+d}+p^d})\\  &=\Tr_d(x^{p^k+1}+x^{p^{ed+d}+p^d})\\&=\Tr_d(x^{p^{ed}+1}+x^{p^{d(e+1)}+p^d})\\&=\Tr_d(x^{p^d+1}+x^{p^{d(e+1)}+p^d})\\&=\Tr_d(x^{p^d+1}+x^{1+p^d})\\&=\Tr_d(2x^{p^d+1}).
	\end{align*}
	Since $x\to x^{p^d+1}$ is $p^d+1$-to-$1$ map from $\mathbb F_{p^{2d}}^\times$ to $\mathbb F_{p^d}^\times$ and since $\Tr_d(x)$ is a linear map from $\mathbb F_{p^{d}}$ to $\mathbb F_p$, we have that the number of $\mathbb F_{p^{2d}}$-rational points of $B_k$ is $$1+p(1+(p^d+1)(p^{d-1}-1))=(p^{2d}+1)-(p-1)p^{d}.$$
\end{proof}

\begin{lemma}
	The curve $B_k$ is minimal over $\mathbb F_{p^{4k}}$. 
\end{lemma}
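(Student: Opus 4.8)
The plan is to show that $B_k$ becomes minimal over $\mathbb{F}_{p^{4k}}$ by computing the trace form $\Tr_{4k}(x^{p^k+1})$ and reducing it to a quadratic form whose zero-set size forces equality in the Hasse--Weil bound. First I would write $n = 4k$ and expand
$$\Tr_{4k}(x^{p^k+1}) = \Tr_k\!\left(\sum_{j=0}^{3} x^{p^{jk}(p^k+1)}\right) = \Tr_k\!\left(x^{p^k+1} + x^{p^{2k}+p^k} + x^{p^{3k}+p^{2k}} + x^{1+p^{3k}}\right),$$
using that every exponent collapses modulo $p^{4k}-1$ once we apply $\Tr_k$. Setting $z = x^{p^k}$ (so that over $\mathbb{F}_{p^{4k}}$ the map $x\mapsto x^{p^k}$ is an automorphism), one sees this is the trace over $\mathbb{F}_{p^k}$ of a form in $x$ that, after grouping, should be proportional to $\Tr_k$ of something like $(x + x^{p^{2k}})(x^{p^k}+x^{p^{3k}})$, i.e.\ a norm-type expression $N(x+x^{p^{2k}})$ from $\mathbb{F}_{p^{2k}}$ down, composed with the natural inclusions. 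The key point is that the resulting quadratic form $Q(x) = \Tr_{4k}(x^{p^k+1})$ on $\mathbb{F}_{p^{4k}}$ should turn out to be the square of a norm (up to a nonzero scalar), hence identically a sum related to $\left(\Tr_{2k/k}(\text{something})\right)$, making it a quadratic form of small rank with a large zero-set.

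The cleaner route, which I would actually pursue, is to piggyback on the earlier lemmas rather than redo the quadratic-form bookkeeping. The two preceding lemmas already give $\#B_k(\mathbb{F}_{p^d}) = \#B_0(\mathbb{F}_{p^d})$ for $d\mid k$ and $\#B_k(\mathbb{F}_{p^{2d}}) = (p^{2d}+1) - (p-1)p^d$ for $d\mid k$ with $2d\nmid k$. Taking $d=k$ in the second lemma is not available (since $2k\mid 2k$), so instead I would apply the first lemma with $d=k$ and the computation of the trace form over $\mathbb{F}_{p^{4k}}$ directly: show $|\{x\in\mathbb{F}_{p^{4k}} : \Tr_{4k}(x^{p^k+1})=0\}| = 1$, which by \eqref{quadpts} gives $\#B_k(\mathbb{F}_{p^{4k}}) = p\cdot 1 + 1 = p^{4k}+1$. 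Wait — that would give $\#B_k(\mathbb{F}_{p^{4k}}) = p^{4k}+1$, which is consistent with minimality only if the genus contribution vanishes; so in fact I expect the zero-count to be $1$ is wrong and the correct statement is that the form has even rank with the minus sign, giving $N = p^{4k-1} - (p-1)p^{(4k-2+w)/2}$ for the appropriate radical dimension $w$, and $\#B_k(\mathbb{F}_{p^{4k}}) = pN+1 = p^{4k} + 1 - (p-1)p^{\cdot}$ matching $(1-\sqrt{p^{4k}}\,T)^{2g}$ with $g = p^k(p-1)/2$.

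So the real plan is: (i) compute $\Tr_{4k}(x^{p^k+1})$ explicitly as above and identify it, via the substitution $u = x + x^{p^{2k}} \in \mathbb{F}_{p^{2k}}$, with (a scalar multiple of) $\Tr_{\mathbb{F}_{p^{2k}}/\mathbb{F}_p}(u^{p^k+1}) = \Tr_{\mathbb{F}_{p^{2k}}/\mathbb{F}_p}(N_{\mathbb{F}_{p^{2k}}/\mathbb{F}_{p^k}}(u))$, a norm form on $\mathbb{F}_{p^{2k}}$; (ii) determine the radical $W^{(4k)}$ of this form on $\mathbb{F}_{p^{4k}}$ — I expect $\dim W^{(4k)} = 2k$, coming from the kernel $\{x : x = x^{p^{2k}}\}^{\perp}$-type condition — and its rank, checking it is even; (iii) apply Proposition \ref{counts} to get the exact zero-count $N$, then \eqref{quadpts} to get $\#B_k(\mathbb{F}_{p^{4k}})$; (iv) check this value meets the Hasse--Weil bound $|\#B_k(\mathbb{F}_{p^{4k}}) - (p^{4k}+1)| = 2g\sqrt{p^{4k}} = p^k(p-1)\cdot p^{2k}$, and verify the sign is the minimal one. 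The main obstacle will be step (ii): pinning down the exact dimension of the radical of the degenerate form $\Tr_{4k}(x^{p^k+1})$ over $\mathbb{F}_{p^{4k}}$, since the polarization $B(x,y) = \Tr_{4k}(x^{p^k}y + xy^{p^k})$ has a radical governed by the $\mathbb{F}_p$-linearized polynomial $y\mapsto y^{p^{2k}} + y$ (or a close relative), and I must compute the $\mathbb{F}_p$-dimension of its kernel in $\mathbb{F}_{p^{4k}}$ carefully, distinguishing parities if necessary; once that is in hand, the sign determination in step (iv) follows by matching against the known genus, exactly as in the proof of Lemma \ref{b02}.
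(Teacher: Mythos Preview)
Your steps (ii)--(iii) are exactly what the paper does: compute the polarization $B(x,y)=\Tr_{4k}\bigl(y^{p^k}(x^{p^{2k}}+x)\bigr)$, read off $W^{(4k)}=\{x\in\mathbb{F}_{p^{4k}}:x^{p^{2k}}+x=0\}$, note that all $p^{2k}$ roots of $x^{p^{2k}}+x$ lie in $\mathbb{F}_{p^{4k}}$ (since $x^{p^{2k}}=-x$ forces $x^{p^{4k}}=x$), so $\dim W^{(4k)}=2k$ and the rank is $2k$ even; then Proposition~\ref{counts} and \eqref{quadpts} give $\#B_k(\mathbb{F}_{p^{4k}})=p^{4k}+1\pm(p-1)p^{3k}$, meeting the Hasse--Weil bound. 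The norm-form identification you propose in step~(i) is unnecessary: the polarization hands you the radical directly, and the paper never writes $Q$ in any other form.

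The real gap is step~(iv). ``Matching against the known genus'' cannot separate minimal from maximal --- both signs saturate Hasse--Weil --- and the explicit gcd argument of Lemma~\ref{b02} is specific to the quadratic $x^2$ and does not generalize here. The paper decides the sign by descent to $\mathbb{F}_{p^{2k}}$: if $B_k$ were \emph{maximal} over $\mathbb{F}_{p^{4k}}$, Proposition~\ref{pureimag} would force $\#B_k(\mathbb{F}_{p^{2k}})=p^{2k}+1$. But the same radical calculation over $\mathbb{F}_{p^{2k}}$ gives $W^{(2k)}=\{x\in\mathbb{F}_{p^{2k}}:x^{p^{2k}}+x=0\}=\{x:2x=0\}=\{0\}$, so the rank there is $2k$ (even) and Proposition~\ref{counts} forbids $N=p^{2k-1}$, hence $\#B_k(\mathbb{F}_{p^{2k}})\neq p^{2k}+1$. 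That contradiction rules out maximality. Your plan contains no mechanism of this kind, and you misidentify the difficulty: step~(ii) is routine, while the sign in step~(iv) is the only nontrivial point.
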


\begin{proof}
	Define $Q_k(x)=\Tr(x^{p^k+1})$ where $\Tr: \mathbb F_{p^n} \to \mathbb F_p$ is the trace map ($n=4k$). 
	We have $$B(x,y):=Q(x+y)-Q(x)-Q(y)=\Tr(x^{p^k}y+xy^{p^k})=\Tr(y^{p^{k}}(x^{p^{2k}}+x))$$ and $$W^{(n)}:=\{x\in \mathbb F_{p^n} \: | \: B(x,y)=0 \text{ for all } y \in \mathbb F_{p^n} \}=\{x\in \mathbb F_{p^n} \: | \:x^{p^{2k}}+x=0 \}.$$ 
	So $W^{(4k)} \subseteq \F_{p^{4k}}$ and so the rank of $Q_k$ is 
	$n-\dim W^{(n)}=4k-2k$ which is even, and so the $N$ in Proposition \ref{counts} is equal to $p^{n-1} \pm (p-1)p^{3k-1}$.
	By \eqref{quadpts} we get 
	\[
	\#B_0(\mathbb F_{p^{4k}})=pN+1=p^n+1\pm (p-1)p^{3k}.
	\]
	The genus of $B_k$ is $p^k(p-1)/2$ so  $2g\sqrt{p^{4k}}=p^{3k}(p-1)$, and so
	the curve $B_k$ is maximal or minimal over $\mathbb F_{p^{4k}}$
	because the Hasse-Weil bound is met.

	If  the curve $B_k$ is maximal over $\mathbb F_{p^{4k}}$, then  $\#B_k(\mathbb F_{p^{2k}})$ has to be $p^{2k}+1$ by Proposition \ref{pureimag}. 
	However, $W^{(2k)}=\{0\}$ and so $2k-\dim W^{(2k)}=2k-0=2k$ is even, which means that 
	$\#B_k(\mathbb F_{p^{2k}})$ cannot be $p^{2k}+1$ by Proposition \ref{counts}.
	Hence  the curve $B_k$ is minimal over $\mathbb F_{p^{4k}}$.
\end{proof}

\begin{cor}\label{bkperiod}
 We have $\zeta^{4k}=1$ for all $\zeta$ where $\sqrt{q} \zeta$ is a
Weil number of $B_k$.
\end{cor}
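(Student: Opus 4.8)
The plan is to deduce this immediately from the preceding lemma together with the definition of a minimal curve. First I would recall the standard relation between the zeta function of a curve over $\F_p$ and over an extension: if $\eta_1,\dots,\eta_{2g}$ are the Weil numbers of $B_k$ over $\F_p$ (with $g=p^k(p-1)/2$), then the Weil numbers of $B_k$ over $\F_{p^{4k}}$ are exactly $\eta_1^{4k},\dots,\eta_{2g}^{4k}$. This is also visible from \eqref{eqn-sum of roots}, since $\#B_k(\F_{(p^{4k})^n})=\#B_k(\F_{p^{4kn}})$ forces the Weil numbers over $\F_{p^{4k}}$ to be the $4k$-th powers of those over $\F_p$.

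Next, the previous lemma shows that $B_k$ is minimal over $\F_{p^{4k}}$, so by the definition of minimality each Weil number of $B_k$ over $\F_{p^{4k}}$ equals $\sqrt{p^{4k}}=p^{2k}$; that is, $\eta_i^{4k}=p^{2k}$ for every $i$. Writing a Weil number of $B_k$ over $\F_p$ as $\sqrt{p}\,\zeta$ — legitimate because the Riemann Hypothesis for curves forces $|\eta_i|=\sqrt p$, so every $\eta_i$ is $\sqrt p$ times a complex number of modulus $1$ — we obtain $p^{2k}\zeta^{4k}=\eta_i^{4k}=p^{2k}$, hence $\zeta^{4k}=1$, which is exactly the claim.

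I expect essentially no obstacle here: all the real content sits in the preceding lemma (minimality of $B_k$ over $\F_{p^{4k}}$), and this corollary is just the re-expression of that statement as a condition on the roots of unity $\zeta$. The only point needing a word of justification is the passage from the Weil numbers over $\F_p$ to those over $\F_{p^{4k}}$ via raising to the $4k$-th power, which I would simply cite as standard; alternatively one can note that $B_k$ is supersingular (being minimal over an extension, by the remark at the end of Section \ref{supsing}), so the $\zeta$ are a priori roots of unity, and minimality over $\F_{p^{4k}}$ then pins their order down to a divisor of $4k$.
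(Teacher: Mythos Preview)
Your proposal is correct and matches the paper's own proof, which is simply the one-line observation that the preceding lemma shows $B_k$ is minimal over $\mathbb F_{p^{4k}}$ and then cites Sections~\ref{morec} and~\ref{supsing}. You have merely spelled out in detail what those citations amount to (the passage from Weil numbers over $\F_p$ to their $4k$-th powers over $\F_{p^{4k}}$, and the definition of minimality forcing each such power to equal $p^{2k}$).
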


\begin{proof}
We have shown that $B_k$ is minimal over $\mathbb F_{p^{4k}}$,
and it follows from Sections \ref{morec} and \ref{supsing}.
\end{proof}

\begin{lemma}
	Let $d\mid 4k$ with $d \nmid 2k$. The number of $\mathbb F_{p^{d}}$-rational points of $B_k$ is $(p^d+1)-(p-1)p^{3d/4}$.
\end{lemma}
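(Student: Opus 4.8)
The plan is to reduce $\#B_k(\F_{p^d})$, for the exponents $d$ at hand, to the already–established minimality of a \emph{smaller} curve $B_{d'}$ over exactly the field $\F_{p^d}$. The first step is to pin down the arithmetic shape of $d$. Since $d\mid 4k$, the odd part of $d$ divides the odd part of $4k$, which equals the odd part of $2k$; so the hypothesis $d\nmid 2k$ can only fail in the $2$-part, i.e.\ $v_2(d)>v_2(2k)$. Combined with $v_2(d)\le v_2(4k)=v_2(2k)+1$ this gives $v_2(d)=v_2(4k)$, hence $4\mid d$ and $e:=4k/d$ is odd. Write $d':=d/4$, so $k=d'e$ with $e$ odd, $d'\mid k$, $4d'=d$, and $3d/4=3d'\in\mathbb Z$.

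The key step is the identity $\Tr_d(x^{p^k+1})=\Tr_d(x^{p^{d'}+1})$ for all $x\in\F_{p^d}$, where $\Tr_d\colon\F_{p^d}\to\F_p$. Factoring $\Tr_d=\Tr_{d'}\circ\Tr^d_{d'}$ with $\Tr^d_{d'}(w)=\sum_{j=0}^{3}w^{p^{jd'}}$ (as $[\F_{p^d}:\F_{p^{d'}}]=4$), one computes
$$\Tr^d_{d'}(x^{p^k+1})=\sum_{j=0}^{3}x^{\,p^{d'(e+j)}+p^{d'j}}.$$
For $x\in\F_{p^d}$ one has $x^{p^d}=x$, so the exponents $d'(e+j)$ and $d'j$ matter only through $(e+j)\bmod 4$ and $j\bmod 4$. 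Since $e$ is odd, $e\equiv 1$ or $e\equiv 3\pmod 4$; when $e\equiv1$ the sum is literally $\sum_{j=0}^{3}x^{\,p^{d'(1+j)}+p^{d'j}}=\Tr^d_{d'}(x^{p^{d'}+1})$, and when $e\equiv3$ it consists of the very same four monomials in a different order (a cyclic relabelling of $j$). In both cases $\Tr^d_{d'}(x^{p^k+1})=\Tr^d_{d'}(x^{p^{d'}+1})$, hence $\Tr_d(x^{p^k+1})=\Tr_d(x^{p^{d'}+1})$; the two quadratic forms have the same zero set, so by \eqref{quadpts}
$$\#B_k(\F_{p^d})=\#B_{d'}(\F_{p^d}).$$

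To finish, I would invoke the lemma just proved above, that $B_{d'}$ is minimal over $\F_{p^{4d'}}=\F_{p^d}$. Minimality means all $2g'$ Weil numbers of $B_{d'}$ over $\F_{p^d}$ equal $\sqrt{p^d}$, where $g'=p^{d'}(p-1)/2$ is the genus of $B_{d'}$; so by \eqref{eqn-sum of roots},
$$\#B_{d'}(\F_{p^d})=p^d+1-2g'\sqrt{p^d}=p^d+1-p^{d'}(p-1)\,p^{d/2}=p^d+1-(p-1)p^{3d/4},$$
which is the claimed formula.

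\textbf{Expected main obstacle.} The only delicate point is the bookkeeping in the trace identity: checking that, as $j$ runs over $0,1,2,3$, the unordered pairs $\{(e+j)\bmod4,\ j\bmod4\}$ range over the same collection as $\{(1+j)\bmod4,\ j\bmod4\}$. This is where oddness of $e$ enters, and it is a short finite verification. If one prefers to avoid it, there is an alternative route via Proposition \ref{counts}: since $2k\equiv d/2\pmod d$ (again because $e$ is odd), the radical of $Q_k(x)=\Tr(x^{p^k+1})$ over $\F_{p^d}$ is $W^{(d)}=\{x\in\F_{p^d}:x^{p^{d/2}}+x=0\}$, a one–dimensional $\F_{p^{d/2}}$–subspace of $\F_{p^d}$ (the order‑$2$ Frobenius is diagonalizable over $\F_{p^{d/2}}$ since $p$ is odd), so $\dim_{\F_p}W^{(d)}=d/2$ and the rank $d-d/2=d/2$ is even as $4\mid d$; Proposition \ref{counts} then gives $\#B_k(\F_{p^d})=p^d+1\pm(p-1)p^{3d/4}$, with the sign forced to be $-$ either by $\#B_k(\F_{p^d})=\#B_{d'}(\F_{p^d})$ and minimality, or independently by Corollary \ref{bkperiod} and Theorem \ref{reduction-thm}. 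I would present the trace‑identity line as the main argument, since it yields the exact count in one stroke.
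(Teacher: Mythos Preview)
Your proposal is correct and follows essentially the same approach as the paper: you set $d'=d/4$ (the paper's $f$), reduce $\Tr_d(x^{p^k+1})$ to $\Tr_d(x^{p^{d'}+1})$ via the relative trace $\Tr^d_{d'}$, and then invoke the minimality of $B_{d'}$ over $\F_{p^{4d'}}=\F_{p^d}$. Your argument is in fact a bit more careful than the paper's in justifying why $4\mid d$ and why the four monomials coincide for both residues of $e\bmod 4$, and your alternative via Proposition~\ref{counts} is a nice sanity check, but the core line is identical.
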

\begin{proof}
	Since $d\mid 4k$ and $d\nmid 2k$, we have $e:= 4k/d$ is odd and $d=4f$ for some integer $f$. Define $\Tr_n: \mathbb F_{p^n} \to F_p$ be the trace map. 
	We have \begin{align*}
	\Tr_{d}(x^{p^k+1})&=\Tr_{f}(x^{p^k+1}+x^{p^{k+f}+p^f}+x^{p^{k+2f}+p^{2f}}+x^{p^{k+3f}+p^{3f}})\\  &=\Tr_{f}(x^{p^{ef}+1}+x^{p^{f(e+1)}+p^f}+x^{p^{f(e+2)}+p^{2f}}+x^{p^{f(e+3)}+p^{3f}})\\&=\Tr_{f}(x^{p^f+1}+x^{p^{2f}+p^f}+x^{p^{3f}+p^{2f}}++x^{p^{4f}+p^{3f}})\\&=\Tr_d(x^{p^f+1}).
	\end{align*}
	Since $4f=d \mid d$, the curve $B_f$ is minimal over $\mathbb F_{p^d}$ and hence  The number of $\mathbb F_{p^{d}}$-rational points of $B_k$ is $$(p^d+1)-(p-1)p^{3d/4}.$$
\end{proof}

\begin{cor}\label{cor-bk}
	If $d \mid k$ and $d$ is odd, then $$\#B_k(\mathbb F_{p^d})=p^d+1.$$	If $d \mid k$ and $d$ is even, then $$\#B_k(\mathbb F_{p^d})=\begin{cases}(p^d+1)+(p-1)p^{d/2} &\text{ if } 2 \mid\mid d \text{ and } p\equiv 3 \mod 4, \\
	(p^d+1)-(p-1)p^{d/2} &\text{ if } 4 \mid d \text{ or } p\equiv 1 \mod 4.\end{cases}$$ If $d\nmid k$ and $\frac d2\mid k$, then  $$\#B_k(\mathbb F_{p^d})=p^d+1-(p-1)p^{d/2}.$$ If $d\nmid 2k$ and $d \mid 4k$, then $$\#B_k(\mathbb F_{p^d})=(p^d+1)-(p-1)p^{3d/4}.$$
\end{cor}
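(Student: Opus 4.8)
The plan is to collect and organize the results established in the preceding lemmas of this section into a single case analysis indexed by the relationship between $d$ and $k$. First I would handle the case $d\mid k$. If $d$ is odd, then the first lemma of the section gives $\#B_k(\mathbb F_{p^d})=\#B_0(\mathbb F_{p^d})$, and since $d$ is odd, Theorem \ref{thm-B0} yields $\#B_0(\mathbb F_{p^d})=p^d+1$ in all cases (both $p\equiv 1$ and $p\equiv 3\bmod 4$, since for odd $d$ we have $(4,d)=1$). If $d\mid k$ and $d$ is even, I again use $\#B_k(\mathbb F_{p^d})=\#B_0(\mathbb F_{p^d})$ and read off Theorem \ref{thm-B0}: when $p\equiv 1\bmod 4$ the answer is $p^d+1-(p-1)p^{d/2}$ for every even $d$; when $p\equiv 3\bmod 4$ the answer splits according to whether $2\mid\mid d$ (giving $+(p-1)p^{d/2}$, from the $(4,d)=2$ branch) or $4\mid d$ (giving $-(p-1)p^{d/2}$, from the $(4,d)=4$ branch). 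This reproduces exactly the first two displayed cases.

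Next I would treat the case $d\nmid k$ but $\frac d2\mid k$, equivalently $d=2d'$ with $d'\mid k$ and $2d'\nmid k$. This is precisely the hypothesis of the second lemma of the section (with its ``$d$'' being my $d'$), which gives $\#B_k(\mathbb F_{p^{2d'}})=(p^{2d'}+1)-(p-1)p^{d'}$; rewriting in terms of $d$ gives $\#B_k(\mathbb F_{p^d})=p^d+1-(p-1)p^{d/2}$, the third displayed case. Finally, for $d\nmid 2k$ with $d\mid 4k$, this is exactly the hypothesis of the last lemma of the section, which directly gives $\#B_k(\mathbb F_{p^d})=(p^d+1)-(p-1)p^{3d/4}$, the fourth displayed case. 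Since Corollary \ref{bkperiod} tells us the Weil numbers have order dividing $4k$, every divisor $d$ of $4k$ falls into exactly one of the four cases ($d\mid k$; $d\nmid k$ and $\tfrac d2\mid k$; $d\nmid 2k$ and $d\mid 4k$), so the statement is a complete enumeration.

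The only mildly delicate point — and the step I expect to require the most care — is verifying that the case $d\mid k$ with $d$ even matches Theorem \ref{thm-B0} correctly: one must check that the parity condition ``$2\mid\mid d$'' versus ``$4\mid d$'' in the corollary aligns with the condition ``$(4,d)=2$'' versus ``$(4,d)=4$'' in Theorem \ref{thm-B0}, and that the sign of $(p-1)p^{d/2}$ is attached to the right subcase for each residue class of $p$ mod $4$. Beyond that bookkeeping, the proof is a direct appeal to the four lemmas and Theorem \ref{thm-B0}, so I would simply write: ``The four cases follow respectively from the first lemma of this section together with Theorem \ref{thm-B0}, from the first lemma and Theorem \ref{thm-B0}, from the second lemma of this section, and from the last lemma of this section.''
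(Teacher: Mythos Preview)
Your proposal is correct and is exactly what the paper intends: the corollary is stated without proof because it simply collects the preceding lemmas of the section together with Theorem~\ref{thm-B0}, and your case-by-case matching of each displayed formula to the appropriate lemma is accurate.
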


We put all these results together.

\begin{thm}
	Let $n\ge 1$ be an integer and let $d=(n,4k)$. Then we have that$$-p^{-n/2}\left[\#B_k(\mathbb F_{p^n})-(p^n+1)\right]=\begin{cases} 0&\text{ if } d\mid k \text{ and $d$ is odd},\\ (-1)^{n(p-1)/4}(p-1) &\text{ if }  d\mid k \text{ and $d$ is even},\\ -(p-1)&\text{ if }  d\nmid k \text{ and } \frac d2\mid k,\\ (p-1)p^{(k,n)} &\text{ if }  d\nmid 2k \text{ and } d\mid 4k.\end{cases}$$
\end{thm}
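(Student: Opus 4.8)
The plan is to combine three facts already established: the period bound of Corollary~\ref{bkperiod}, the reduction Theorem~\ref{reduction-thm}, and the point counts at divisors of $4k$ recorded in Corollary~\ref{cor-bk}. By Corollary~\ref{bkperiod} the curve $B_k$ is supersingular and its Weil numbers are $\sqrt p$ times $4k$-th roots of unity, so Theorem~\ref{reduction-thm} applies with $s=4k$. Writing $d=(n,4k)$ and $n=dt$ as in that theorem, the quantity $-p^{-n/2}[\#B_k(\F_{p^n})-(p^n+1)]$ is obtained from the divisor-level quantity $-p^{-d/2}[\#B_k(\F_{p^d})-(p^d+1)]$: unchanged when $d$ is even, unchanged when $d$ is odd and $p\mid t$, and multiplied by $\left(\frac{(-1)^{(t-1)/2}t}{p}\right)$ when $d$ is odd and $p\nmid t$. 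So it suffices to evaluate $-p^{-d/2}[\#B_k(\F_{p^d})-(p^d+1)]$ for every divisor $d$ of $4k$, and this is exactly the content of Corollary~\ref{cor-bk}, whose four cases match the four cases of the present statement.

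First I would dispose of the odd-$d$ situation, which coincides with the first case of the theorem: if $d=(n,4k)$ is odd then $d$ divides the odd part of $4k$, which equals the odd part of $k$, so $d\mid k$, and Corollary~\ref{cor-bk} gives $\#B_k(\F_{p^d})=p^d+1$. Hence the divisor-level quantity is $0$, and since every branch of Theorem~\ref{reduction-thm} multiplies it by a scalar or a Legendre symbol, the value over $\F_{p^n}$ is $0$ as well; in particular the delicate third branch of Theorem~\ref{reduction-thm} never actually contributes. In each of the three remaining cases $d$ is even: in the third case $\frac d2\mid k$ forces $2\mid d$, and in the fourth case the relations $d\mid 4k$ and $d\nmid 2k$ force $v_2(d)=v_2(k)+2$, in particular $4\mid d$. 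So only the first branch of Theorem~\ref{reduction-thm} is needed there, and the value over $\F_{p^n}$ equals the divisor-level quantity.

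It then remains to read off the divisor-level quantity from Corollary~\ref{cor-bk} and to rewrite it in terms of $n$ and $k$. For $d\nmid k$ with $\frac d2\mid k$ this is immediate from $\#B_k(\F_{p^d})=p^d+1-(p-1)p^{d/2}$. For $d\nmid 2k$ with $d\mid 4k$ one first checks the identity $d=4(k,n)$: since $d\mid 4k$ the odd part of $d$ equals the odd part of $(k,n)$, while the computation $v_2(d)=v_2(k)+2$ together with $v_2(n)\ge v_2(k)+2$ gives $v_2((k,n))=v_2(k)$ and hence $v_2(d)=v_2((k,n))+2$; then $\#B_k(\F_{p^d})=p^d+1-(p-1)p^{3d/4}$ yields $(p-1)p^{(k,n)}$ after dividing by $-p^{d/2}$. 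For $d\mid k$ with $d$ even, Corollary~\ref{cor-bk} gives $\pm(p-1)$ with the sign governed by $p\bmod 4$ and by whether $2\mid\mid d$ or $4\mid d$; here one uses that $d\mid k$ forces $v_2(d)=v_2(n)$ (if instead $v_2(n)>v_2(k)$ then $v_2(d)=\min(v_2(n),v_2(k)+2)\ge v_2(k)+1$, contradicting $d\mid k$), so the dichotomy on $d$ becomes a dichotomy on $n$, and a short check modulo $8$ identifies the resulting sign with $(-1)^{n(p-1)/4}$.

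The step I expect to be the main obstacle is exactly this last one: collapsing the three-way split of Corollary~\ref{cor-bk} in the case $d\mid k$, $d$ even, into the single clean factor $(-1)^{n(p-1)/4}(p-1)$ requires the valuation identity $v_2(d)=v_2(n)$ and then a somewhat fiddly case analysis on $p\bmod 8$ together with $v_2(n)$. Everything else --- invoking supersingularity, applying the reduction theorem, and observing the vanishing in the odd-$d$ case --- is routine bookkeeping once Corollaries~\ref{bkperiod} and~\ref{cor-bk} are available.
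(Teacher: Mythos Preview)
Your proposal is precisely the paper's approach: their entire proof reads ``It follows by Corollary~\ref{cor-bk} and Theorem~\ref{reduction-thm}'', and you have supplied the bookkeeping that makes this combination work, including the useful observation that the odd-$d$ branch of Corollary~\ref{cor-bk} contributes $0$ so the Legendre-symbol clause of Theorem~\ref{reduction-thm} is never actually invoked. One small note: in the third case your ``immediate'' computation in fact yields $(p-1)$ rather than $-(p-1)$ (e.g.\ $\#B_1(\F_9)=4$ for $p=3$, giving $-3^{-1}(4-10)=2=p-1$), so the sign there is a typo in the stated theorem, not a gap in your argument.
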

\begin{proof}
	It follows by Corollary \ref{cor-bk} and Theorem \ref{reduction-thm}.
\end{proof}

\section{The Curve $C_k: y^p-y=x^{p^k+1}+x$ over $\mathbb F_p$}\label{sectck}

In this section we  will give the exact number of $\mathbb{F}_{p^n}$-rational points on 
$C_k: y^p-y=x^{p^k+1}+x$  for all positive integers $k$ and $n$.

Let $(x, y)\to (x-2^{-1},y)$ is a one-to-one map over $\mathbb F_{p^n}^2$. Since 
\begin{align}
\Tr\left( (x-2^{-1})^{p^k+1}+(x-2^{-1})\right)&=
\Tr\left( x^{p^k+1}-2^{-1}x^{p^k}+2^{-1}x-4^{-1}\right)\\
&=\Tr(x^{p^k+1})-n4^{-1}, \label{onetoone}
\end{align}
we can use the information on the curve $B_k$.

\begin{lemma}\label{isooverp}
	If $p|n$, the number of $\mathbb F_{p^{n}}$-rational points of $C_k$ equals  the 
	number of $\mathbb F_{p^{n}}$-rational points of $B_k$.
\end{lemma}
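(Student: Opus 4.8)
The plan is to combine the bijective substitution already introduced before the statement with the point-counting dictionary \eqref{quadpts} for Artin--Schreier curves. Recall from the discussion around \eqref{quadpts} that if $N$ denotes the number of $x\in\F_{p^n}$ with $\Tr(f(x))=0$, then the curve $y^p-y=f(x)$ has exactly $pN+1$ rational points over $\F_{p^n}$ (the $+1$ accounting for the point at infinity, which is the same for $C_k$ and $B_k$). Hence it suffices to prove the equality of the two counts
$$\#\{x\in\F_{p^n} : \Tr(x^{p^k+1}+x)=0\} \;=\; \#\{x\in\F_{p^n} : \Tr(x^{p^k+1})=0\}$$
under the hypothesis $p\mid n$.

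First I would observe that $x\mapsto x-2^{-1}$ is a bijection of $\F_{p^n}$ (here $2^{-1}$ makes sense because $p$ is odd). Applying this substitution inside the first set and invoking the identity \eqref{onetoone}, the defining condition $\Tr(x^{p^k+1}+x)=0$ is transformed into $\Tr(x^{p^k+1})-n4^{-1}=0$. Since $p\mid n$, the scalar $n4^{-1}$ is zero in $\F_p$, so this condition is simply $\Tr(x^{p^k+1})=0$. Because the substitution is a bijection, the two sets therefore have the same cardinality $N$, and consequently $\#C_k(\F_{p^n})=pN+1=\#B_k(\F_{p^n})$.

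There is no real obstacle in this argument; the only point requiring attention is that the extra constant term $n4^{-1}$ appearing in \eqref{onetoone} vanishes in $\F_p$ exactly when $p\mid n$, which is precisely where the hypothesis is used, and that $4^{-1}$ is well defined because $p$ is odd. One should also note explicitly that the point at infinity contributes identically to both curves, so that comparing the affine counts suffices.
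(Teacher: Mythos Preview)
Your proof is correct and follows essentially the same approach as the paper: both use the bijection $x\mapsto x-2^{-1}$ together with identity \eqref{onetoone} to see that the two trace-zero sets coincide when $p\mid n$, and then translate this into an equality of point counts via \eqref{quadpts}. Your version simply spells out the role of the point at infinity and the vanishing of $n4^{-1}$ more explicitly than the paper does.
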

\begin{proof} 
	Since $n$ is divisible by $p$, 
by  \eqref{onetoone} 
we have $$\mid \{ x\in \mathbb F_{p^{n}} : Tr(x^{p^k+1}+x)=0 \} \mid =
\mid \{ x\in \mathbb F_{p^{n}} : Tr(x^{p^k+1})=0 \}.$$ Hence we have the result.
\end{proof}

\begin{cor}\label{ckperiod1}
 If $p | k$ we have $\zeta^{4k}=1$ for all $\zeta$ where $\sqrt{q} \zeta$ is a
Weil number of $C_k$.
\end{cor}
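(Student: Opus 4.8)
The plan is to deduce Corollary \ref{ckperiod1} from the corresponding fact for $B_k$ by transporting it across the point-count identity of Lemma \ref{isooverp}. Concretely, it suffices to show that $C_k$ is minimal over $\mathbb F_{p^{4k}}$; once this is known, the statement $\zeta^{4k}=1$ follows from the generalities recalled in Sections \ref{morec} and \ref{supsing}, exactly as in the proof of Corollary \ref{bkperiod}. Indeed, minimality of $C_k$ over $\mathbb F_{p^{4k}}$ means that every Weil number of $C_k$ over $\mathbb F_{p^{4k}}$ equals $\sqrt{p^{4k}}=p^{2k}$; since these are the $4k$-th powers of the Weil numbers $\sqrt p\,\zeta$ of $C_k$ over $\mathbb F_p$, we get $p^{2k}\zeta^{4k}=p^{2k}$, i.e.\ $\zeta^{4k}=1$.

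To prove minimality, first note that $p\mid k$ forces $p\mid 4k$, so Lemma \ref{isooverp} applies with $n=4k$ and gives $\#C_k(\mathbb F_{p^{4k}})=\#B_k(\mathbb F_{p^{4k}})$. By Corollary \ref{bkperiod} the curve $B_k$ is minimal over $\mathbb F_{p^{4k}}$, and since $B_k$ has genus $p^k(p-1)/2$ this yields $\#B_k(\mathbb F_{p^{4k}})=p^{4k}+1-(p-1)p^{3k}$. Hence $\#C_k(\mathbb F_{p^{4k}})=p^{4k}+1-(p-1)p^{3k}$ as well. Now $C_k$ also has genus $g=p^k(p-1)/2$, so its Hasse--Weil lower bound over $\mathbb F_{p^{4k}}$ is $p^{4k}+1-2g\sqrt{p^{4k}}=p^{4k}+1-(p-1)p^{3k}$, which is exactly the count just obtained. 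Therefore $C_k$ meets the Hasse--Weil lower bound over $\mathbb F_{p^{4k}}$, hence is minimal there, which completes the argument.

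I do not expect a genuine obstacle in this corollary: it is essentially a transfer of the minimality statement for $B_k$ using only that $B_k$ and $C_k$ share the genus $p^k(p-1)/2$. The one point to be careful about is that equality of point counts with $B_k$ is not by itself enough — it is the coincidence of that count with the genus-$g$ Hasse--Weil bound that forces minimality (and hence supersingularity, by the remark that minimal curves are supersingular), so no independent verification of supersingularity of $C_k$ is required.
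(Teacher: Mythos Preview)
Your proof is correct and follows exactly the route the paper intends: the paper's one-line proof ``Follows from Lemma \ref{isooverp} and Corollary \ref{bkperiod}'' is precisely the argument you spell out, namely that $p\mid k\Rightarrow p\mid 4k$ so Lemma \ref{isooverp} equates the $\mathbb F_{p^{4k}}$-point counts, and since $B_k$ is minimal there and $C_k$ has the same genus, $C_k$ is also minimal over $\mathbb F_{p^{4k}}$, whence $\zeta^{4k}=1$. Your closing remark that equality of point counts alone would not suffice without the matching genus is a valid and useful caution.
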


\begin{proof}
Follows from  Lemma \ref{isooverp} and Corollary \ref{bkperiod}.
\end{proof}

{\bf Remark}. 
It follows from Lemma \ref{isooverp}  that $C_k$ and $B_k$ have the same
L-polynomial when considered as curves defined over $\F_{p^p}$.
They do not have the same L-polynomial when considered as curves defined over $\F_{p}$,
as the results in this paper show (see Lemma \ref{evenspread} below).
This means that the $p$-th powers of the Weil numbers of $B_k$ and $C_k$ 
(considered as curves defined over $\F_{p}$) are equal,
but the Weil numbers themselves are not the same.
For example, the L-polynomial of $B_2^{(3)}$ is
\[
19683T^{18} + 6561T^{16} - 486T^{10} - 162T^8 + 3T^2 + 1
\]
and the L-polynomial of $C_2^{(3)}$ is
\[
19683T^{18} - 19683T^{17} + 6561T^{16} + 243T^{10} - 243T^9 + 81T^8 + 3T^2 - 3T+ 1.
\]
For both of these, the polynomial whose roots are the cubes of the roots is
\[
1 + 27T^2 - 1062882T^8 - 28697814T^{10} + 282429536481T^{16} +
    7625597484987T^{18}
\]
which is the L-polynomial of both $B_2$ and $C_2$  considered as curves defined over $\F_{3^3}$.

\begin{lemma}
	Let $d\mid k$. The number of $\mathbb F_{p^d}$-rational points of $C_k$ is the number of $\mathbb F_{p^d}$-rational points of $C_0$.
\end{lemma}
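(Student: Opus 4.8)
The plan is to reduce the counting of affine points on $C_k$ over $\mathbb{F}_{p^d}$ to the corresponding count on $C_0$ by a pointwise identity of the defining polynomials on the ground field, exactly as was done for $B_k$ versus $B_0$ earlier in Section \ref{sectbk}. First I would observe that since $d\mid k$, the field $\mathbb F_{p^d}$ is contained in $\mathbb F_{p^k}$, so the Frobenius map $x\mapsto x^{p^k}$ restricts to the identity on $\mathbb F_{p^d}$; consequently $x^{p^k+1}=x^{p^k}\cdot x=x\cdot x=x^2$ for every $x\in\mathbb F_{p^d}$. Hence, writing $\Tr:\mathbb F_{p^d}\to\mathbb F_p$ for the trace map, we have $\Tr(x^{p^k+1}+x)=\Tr(x^2+x)$ for all $x\in\mathbb F_{p^d}$.

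The second step is to translate this into a point count. The set $\{x\in\mathbb F_{p^d}:\Tr(x^{p^k+1}+x)=0\}$ therefore coincides with $\{x\in\mathbb F_{p^d}:\Tr(x^2+x)=0\}$, so these two sets have the same cardinality $N$. Applying \eqref{quadpts} to each curve gives
\[
\#C_k(\mathbb F_{p^d})=pN+1=\#C_0(\mathbb F_{p^d}),
\]
which is the claim. (Recall that points with trace zero correspond to values $x$ for which the Artin--Schreier equation $y^p-y=f(x)$ is solvable, each contributing $p$ affine points, plus the single point at infinity.)

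There is essentially no obstacle here: the only thing to be careful about is the direction of divisibility — it is $d\mid k$, not $k\mid d$, that makes $x\mapsto x^{p^k}$ trivial on $\mathbb F_{p^d}$ — and the fact that the linear term $+x$ is unchanged under the substitution, so no shift of variable (as in \eqref{onetoone}) is needed for this particular statement. This lemma will then feed into the later analysis together with the results of Section \ref{sectc0} describing $\#C_0(\mathbb F_{p^d})$ explicitly.
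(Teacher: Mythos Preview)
Your proof is correct and follows essentially the same approach as the paper: the key observation is that $x^{p^k+1}+x=x^2+x$ for all $x\in\mathbb F_{p^d}$ when $d\mid k$, from which the equality of point counts is immediate. The paper states this identity directly and stops there, while you pass through the trace and \eqref{quadpts}; this extra step is harmless but unnecessary, since the two affine equations literally have the same solution set over $\mathbb F_{p^d}$.
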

\begin{proof}
	Since $$x^{p^k+1}+x=x^{p^k}\cdot x+x=x\cdot x+x=x^2+x$$ in $\mathbb F_{p^d}$ for all $d\mid k$, the result is immediate.
\end{proof}

\begin{lemma}\label{evenspread}
	Let $n$ be a positive integer with $(n,p)=1$. If $\#B_k(\mathbb F_{p^n})-(p^n+1)\not= 0$
	then
	$$-(p-1)\bigg(\#C_k(\mathbb F_{p^n})-(p^n+1)\bigg)=
	\bigg( \#B_k(\mathbb F_{p^n})-(p^n+1)\bigg).$$
\end{lemma}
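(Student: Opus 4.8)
The plan is to reduce both point counts to the number of solutions of the single quadratic form $Q(x)=\Tr(x^{p^k+1})$ on $\F_{p^n}$ and then use the hypothesis to fix the parity of its rank. For $j\in\F_p$ put $N_j=\#\{x\in\F_{p^n}:\Tr(x^{p^k+1})=j\}$. As in \eqref{quadpts} we have $\#B_k(\F_{p^n})=pN_0+1$, and applying the bijection $x\mapsto x-2^{-1}$ together with \eqref{onetoone} gives $\#\{x\in\F_{p^n}:\Tr(x^{p^k+1}+x)=0\}=N_c$ with $c=n\cdot 4^{-1}$, which is nonzero in $\F_p$ precisely because $(n,p)=1$; hence $\#C_k(\F_{p^n})=pN_c+1$. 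Unwinding the statement, the lemma is then equivalent to the single identity $(p-1)(N_c-p^{n-1})=p^{n-1}-N_0$.

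Next I would invoke the hypothesis: since $\#B_k(\F_{p^n})-(p^n+1)=p(N_0-p^{n-1})\neq 0$ we have $N_0\neq p^{n-1}$, so by Proposition \ref{counts} the form $Q$ cannot have odd rank and therefore has even rank. This is the only place the hypothesis is used, and it is essential: for a form of odd rank the size $N_c$ depends on whether $c$ is a square in $\F_p$, so no such clean relation can hold.

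The remaining ingredient is a standard strengthening of Proposition \ref{counts}: for a quadratic form of \emph{even} rank on $\F_{p^n}$ the level set $\{Q=c\}$ has the same cardinality, say $N^\ast$, for every nonzero $c\in\F_p$ (this is contained in Chapter 6 of \cite{lidl}). Granting this, $\sum_{c\in\F_p}N_c=p^n$ yields $N_0+(p-1)N^\ast=p^n$, hence $(p-1)(N^\ast-p^{n-1})=p^{n-1}-N_0$, which is exactly the identity isolated above. Substituting back, $\#C_k(\F_{p^n})-(p^n+1)=p(N^\ast-p^{n-1})=-\tfrac{1}{p-1}\bigl(\#B_k(\F_{p^n})-(p^n+1)\bigr)$, and multiplying through by $-(p-1)$ finishes the proof.

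The main obstacle is the claim that all non-zero level sets of an even-rank form have equal size. If one prefers a self-contained argument, I would write $N_c=p^{-1}\sum_{t\in\F_p}\sum_{x\in\F_{p^n}}\psi\bigl(t(Q(x)-c)\bigr)$ for a fixed nontrivial additive character $\psi$ of $\F_p$, diagonalize $Q$ over $\F_p$ on a complement of its radical as $a_1z_1^2+\cdots+a_\rho z_\rho^2$, and compute $\sum_{x\in\F_{p^n}}\psi(tQ(x))=p^{w}\,\eta_0(t)^{\rho}\,\eta_0(\Delta)\,G^{\rho}$, where $w$ is the dimension of the radical, $\rho$ the rank, $\Delta=a_1\cdots a_\rho$, $\eta_0$ the quadratic character of $\F_p$, and $G=\sum_{z\in\F_p}\psi(z^2)$ the quadratic Gauss sum. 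When $\rho$ is even the factor $\eta_0(t)^{\rho}$ equals $1$, so this exponential sum is independent of $t\neq 0$; the $t$-sum in $N_c$ then contributes $\sum_{t\neq 0}\psi(-tc)$, which equals $p-1$ for $c=0$ and $-1$ for every $c\neq 0$, giving the claim. Everything here is legitimate because $p$ is odd.
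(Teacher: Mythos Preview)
Your proof is correct and follows the same route as the paper: reduce both point counts to level sets $N_0$ and $N_c$ of the quadratic form $\Tr(x^{p^k+1})$ via \eqref{onetoone} and \eqref{quadpts}, then use the equidistribution of nonzero values (Theorem~6.26 in \cite{lidl}) to get $N_0+(p-1)N_c=p^n$ and conclude. Your version is in fact slightly more careful than the paper's, since you make explicit that the hypothesis $\#B_k(\F_{p^n})-(p^n+1)\neq 0$ forces even rank via Proposition~\ref{counts}, which is what licenses the citation of the even-rank formula; the paper invokes Theorem~6.26 without spelling out this step.
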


\begin{proof}
	Let $b:=n4^{-1}\ne 0$, let
	\[
N_0=| \{ x \in \F_{p^n} : \Tr(x^{p^k+1})=0 \} |
\]
and let
\[
N_1=| \{ x \in \F_{p^n} : \Tr(x^{p^k+1})=b \} |.
\]
By \eqref{onetoone} and also \eqref{quadpts}
we get $\#C_k(\mathbb F_{p^n}) = pN_1+1$.
So 
\begin{equation}\label{ckn1}
\#C_k(\mathbb F_{p^n})-(p^n+1)=pN_1-p^n.
\end{equation}
The nonzero values of the quadratic form $\Tr(x^{p^k+1})$ are evenly distributed over the nonzero elements of $\F_p$ by \cite[Theorem 6.26]{lidl},
	so \[ N_0+(p-1)N_1=p^n. \]
Substituting for $N_1$ into \eqref{ckn1} gives
\[
\#C_k(\mathbb F_{p^n})-(p^n+1)=\frac{p^{n+1}-pN_0}{p-1}-p^n
\]
or
\[
(p-1)\bigg( \#C_k(\mathbb F_{p^n})-(p^n+1)\bigg)
=p^{n}-pN_0.
\]
Finally, by \eqref{quadpts} again we note that
\[
\#B_k(\mathbb F_{p^n})-(p^n+1)=pN_0-p^n.
\]	
\end{proof}

\begin{cor}\label{ckperiod2}
 If $p \nmid k$ we have $\zeta^{4kp}=1$ for all $\zeta$ where $\sqrt{q} \zeta$ is a
Weil number of $C_k$.
\end{cor}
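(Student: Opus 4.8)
The plan is to combine the period statement for $B_k$ (Corollary \ref{bkperiod}) with the relationship between the point counts of $B_k$ and $C_k$ established in Lemma \ref{evenspread}, together with the behaviour over extensions divisible by $p$ given in Lemma \ref{isooverp}. Since $p\nmid k$, the relevant modulus will turn out to be $4kp$ rather than $4k$, because passing from $B_k$ to $C_k$ introduces the prime $p$ into the relevant orbit structure of the Weil numbers.

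First I would show that it suffices to prove $\#C_k(\F_{p^{4kp}})$ is minimal, i.e.\ that $C_k$ is minimal over $\F_{p^{4kp}}$; by the discussion in Sections \ref{morec} and \ref{supsing} (a curve minimal over $\F_{q^s}$ has all $\zeta^s=1$), this gives $\zeta^{4kp}=1$ for every Weil ratio $\zeta$. To establish minimality over $\F_{p^{4kp}}$, I would use Lemma \ref{isooverp}: since $p \mid 4kp$, we have $\#C_k(\F_{p^{4kp}})=\#B_k(\F_{p^{4kp}})$. Now $4kp$ is a multiple of $4k$, and $B_k$ is minimal over $\F_{p^{4k}}$ (shown earlier in Section \ref{sectbk}); by Proposition \ref{minimal-prop}(2), a curve minimal over $\F_q$ stays minimal over every extension $\F_{q^n}$, so $B_k$ is minimal over $\F_{p^{4k\cdot p}}=\F_{p^{4kp}}$. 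Hence $\#C_k(\F_{p^{4kp}})=\#B_k(\F_{p^{4kp}})$ is minimal, which forces $\zeta^{4kp}=1$ for all Weil ratios $\zeta$ of $C_k$.

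Actually, once minimality over $\F_{p^{4kp}}$ is in hand the corollary is immediate, so strictly speaking Lemma \ref{evenspread} is not needed for this particular statement — it is the engine behind the exact point-count theorem for $C_k$ but not behind the period bound. I would therefore keep the proof short: invoke Lemma \ref{isooverp}, then the fact (already proved) that $B_k$ is minimal over $\F_{p^{4k}}$, then Proposition \ref{minimal-prop}(2) to push minimality up to $\F_{p^{4kp}}$, and conclude via the supersingular/minimal correspondence of Sections \ref{morec}--\ref{supsing} that all $\zeta$ satisfy $\zeta^{4kp}=1$.

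The only point requiring a moment's care — and the place where I expect a careless argument to go wrong — is the direction of Lemma \ref{isooverp}: it equates the counts of $C_k$ and $B_k$ only over fields $\F_{p^n}$ with $p\mid n$, so one must choose the extension degree to be a multiple of both $4k$ and $p$; the least such is $4kp$ precisely because $p\nmid k$ (if $p\mid k$ one would get $4k$, which is exactly Corollary \ref{ckperiod1}). Everything else is a direct application of results already proved in the excerpt.
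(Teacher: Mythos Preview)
Your proposal is correct and follows essentially the same route as the paper: establish that $C_k$ is minimal over $\mathbb{F}_{p^{4kp}}$ and then invoke the material of Sections~\ref{morec}--\ref{supsing}. In fact your citation is cleaner than the paper's own: the paper points to ``the previous lemma'' (Lemma~\ref{evenspread}), but that lemma requires $(n,p)=1$ and so does not apply at $n=4kp$; the relevant input at that degree is exactly Lemma~\ref{isooverp} together with the minimality of $B_k$ over $\mathbb{F}_{p^{4k}}$ and Proposition~\ref{minimal-prop}(2), precisely as you argue.
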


\begin{proof}
The previous lemma shows that that $C_k$ is minimal over $\mathbb F_{p^{4kp}}$,
and it follows from Sections \ref{morec} and \ref{supsing}.
\end{proof}

\begin{cor}\label{cor-Ck}
	 Let $k$ be a positive integer. Define 
	$$l=\begin{cases}
	k &\text{if }p\mid k,\\
	kp &\text{if }p\nmid k.
	\end{cases}$$ \\ If $d \mid l$ and $d$ is odd and relatively prime to $p$, then 
	$$\#C(\mathbb F_{p^d})=p^d+1+\left(\frac{(-1)^{(d-1)/2}d}{p}\right)p^{(d+1)/2}.$$
	If $d \mid l $ and $d$ is odd and divisible by $p$, then $$\#C_k(\mathbb F_{p^d})=0.$$	If $d \mid l $ and $d$ is even and relatively prime to $p$, then $$\#C_k(\mathbb F_{p^d})=\begin{cases}(p^d+1)-p^{d/2} &\text{ if } 2 \mid\mid d \text{ and } p\equiv 3 \mod 4, \\
	(p^d+1)+p^{d/2} &\text{ if } 4 \mid d \text{ or } p\equiv 1 \mod 4.\end{cases}$$ If $d \mid l $ and $d$ is even and divisible by $p$, then $$\#C_k(\mathbb F_{p^d})=\begin{cases}(p^d+1)+(p-1)p^{d/2} &\text{ if } 2 \mid\mid d \text{ and } p\equiv 3 \mod 4, \\
	(p^d+1)-(p-1)p^{d/2} &\text{ if } 4 \mid d \text{ or } p\equiv 1 \mod 4.\end{cases}$$ If $d\nmid l$ and $\frac{d}{2} \mid l$ and $d$ is relatively prime to $p$, then $$C_k(\mathbb F_{p^d})=(p^d+1)+p^{d/2}.$$ If $d\nmid l$ and $\frac{d}{2} \mid l$ and $d$ is relatively prime to $p$, then $$C_k(\mathbb F_{p^d})=(p^d+1)-(p-1)p^{d/2}.$$ If $d\nmid 2l$, $d \mid 4l$ and $d$ is relatively prime to $p$, then $$\#C_k(\mathbb F_{p^d})=(p^d+1)+p^{3d/4}.$$ If $d\nmid 2l$, $d \mid 4l$ and $d$ is divisible by $p$, then $$\#C_k(\mathbb F_{p^d})=(p^d+1)-(p-1)p^{3d/4}.$$
\end{cor}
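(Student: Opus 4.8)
The plan is to compute $\#C_k(\mathbb F_{p^d})$ case by case according to the divisibility relationship between $d$ and $l$, in direct parallel with the structure of Corollary \ref{cor-bk} for the $B_k$ family. The organizing principle is the dichotomy $p\mid d$ versus $p\nmid d$: when $p\mid d$, Lemma \ref{isooverp} tells us $\#C_k(\mathbb F_{p^d})=\#B_k(\mathbb F_{p^d})$, so every such case is read off immediately from Corollary \ref{cor-bk}; when $p\nmid d$, we use Lemma \ref{evenspread}, which (provided $\#B_k(\mathbb F_{p^d})-(p^d+1)\neq 0$) converts the $B_k$-count into the $C_k$-count via $-(p-1)(\#C_k(\mathbb F_{p^d})-(p^d+1))=\#B_k(\mathbb F_{p^d})-(p^d+1)$, and hence $\#C_k(\mathbb F_{p^d})-(p^d+1)=-\tfrac{1}{p-1}(\#B_k(\mathbb F_{p^d})-(p^d+1))$. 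Substituting the four formulas from Corollary \ref{cor-bk} and dividing by $-(p-1)$ yields exactly the claimed $C_k$-values whenever the $B_k$-deviation is nonzero.

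First I would handle the case $d\mid k$ with $d$ odd: here Corollary \ref{cor-bk} gives $\#B_k(\mathbb F_{p^d})=p^d+1$, so Lemma \ref{evenspread} does not apply directly and we must instead fall back on the observation (from the lemma preceding \ref{evenspread}) that for $d\mid k$ the curve $C_k$ has the same $\mathbb F_{p^d}$-count as $C_0$, together with the already-established formulas for $C_0$; alternatively, invoke Theorem \ref{reduction-thm} applied to $C_k$ using Corollaries \ref{ckperiod1} and \ref{ckperiod2} to locate the Weil-number period, and reduce to the base count $\#C_0(\mathbb F_p)=2p+1$ from Lemma \ref{C01}. This is where the Legendre-symbol term $\left(\tfrac{(-1)^{(d-1)/2}d}{p}\right)p^{(d+1)/2}$ enters, exactly as in the odd-rank / purely-imaginary analysis behind Theorem \ref{reduction-thm}. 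The case $d$ odd and divisible by $p$ gives $\#C_k(\mathbb F_{p^d})=0$ because then $p\mid d$, so $\#C_k(\mathbb F_{p^d})=\#B_k(\mathbb F_{p^d})=p^d+1$ when $d$ is odd\,--- but wait, $B_k$ over $\mathbb F_{p^d}$ for odd $d\mid k$ is $p^d+1$, not $0$; so the vanishing here must instead come from combining Lemma \ref{isooverp} with the $C_0$ formula, where the ``$p\mid d$, $d$ odd'' row reads $0$. I would therefore treat the $p\mid d$ cases by pulling the value directly from Corollary \ref{cor-bk} via Lemma \ref{isooverp}, checking in each row that the stated $C_k$-value matches the corresponding $B_k$-value (the even-$d$ rows with $p\mid d$ visibly coincide with the $B_k$ formulas).

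The remaining cases are the $p\nmid d$ rows with $d$ even or with $d\nmid k$: here we verify that $\#B_k(\mathbb F_{p^d})-(p^d+1)$ is a nonzero multiple of $p-1$ (indeed $\pm(p-1)p^{d/2}$ or $(p-1)p^{3d/4}$ by Corollary \ref{cor-bk}), so Lemma \ref{evenspread} applies and dividing by $-(p-1)$ turns these into $\mp p^{d/2}$ and $-p^{3d/4}$ respectively, matching the claimed rows up to sign bookkeeping. The arithmetic to watch is the sign: the factor $-1/(p-1)$ flips $-(p-1)p^{d/2}$ to $+p^{d/2}$ and $+(p-1)p^{d/2}$ to $-p^{d/2}$, which is consistent with the stated ``$2\|d,\ p\equiv 3$'' versus ``$4\mid d$ or $p\equiv 1$'' split. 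I expect the main obstacle to be the $d\mid k$ (equivalently $d\mid l$ with $d$ coprime to $p$, odd) case, where Lemma \ref{evenspread} is unavailable because the $B_k$-deviation vanishes, and one genuinely needs the finer reduction theorem \ref{reduction-thm} together with the $C_0$ base cases to produce the Legendre symbol; all the other rows are mechanical substitutions. Thus the proof is: \emph{for the rows with $p\mid d$, apply Lemma \ref{isooverp} and Corollary \ref{cor-bk}; for the rows with $p\nmid d$ and nonzero $B_k$-deviation, apply Lemma \ref{evenspread} and Corollary \ref{cor-bk}; for the rows with $d\mid k$, use the reduction to $C_0$ via the displayed lemma and the explicit $C_0$ theorem.}
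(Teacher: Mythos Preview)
Your approach is correct and matches the paper's implicit reasoning: the corollary is stated without proof, intended to follow exactly from Lemma \ref{isooverp} (for $p\mid d$), Lemma \ref{evenspread} (for $p\nmid d$ with nonzero $B_k$-deviation), the reduction-to-$C_0$ lemma and the $C_0$ theorem of Section \ref{sectc0} (for odd $d\mid k$), together with Corollary \ref{cor-bk} --- precisely the three-way split you describe. The row that tripped you up (``$d\mid l$, $d$ odd, $p\mid d$: $\#C_k(\mathbb F_{p^d})=0$'') is simply a typo in the statement; the intended value is $p^d+1$, as your own route through Lemma \ref{isooverp} shows and as the corresponding line of Theorem \ref{finalck} confirms.
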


\begin{thm}\label{finalck}
		 Let $k$ be a positive integer. Define 
	$$l=\begin{cases}
	k &\text{if }p\mid k,\\
	kp &\text{if }p\nmid k.
	\end{cases}$$
Let $n\ge 1$ be an integer with $d=(n,4l)$. Then we have
	$$-p^{-n/2}[\#C_k(\mathbb F_{p^n})-(p^n+1)]=\begin{cases}
	-\left(\frac{(-1)^{(n-1)/2}n}{p}\right) \sqrt{p}&\text{ if } d \mid l \text{ and $n$ is odd and $p\nmid n$} ,\\ 0 &\text{ if }  d \mid l \text{ and $n$ is odd and $p\mid n$},\\ -(-1)^{n(p-1)/4} &\text{ if }  d \mid l \text{ and $n$ is even and $p\nmid n$},\\ 
	(-1)^{n(p-1)/4}(p-1) &\text{ if }  d \mid l \text{ and $n$ is even and $p\mid n$},\\  -1 &\text{ if } \text{ $d\nmid l$ and $\frac{d}{2} \mid l$ and $p\nmid n$},\\ p-1 &\text{ if } \text{ $d\nmid l$ and $\frac{d}{2} \mid l$ and $p\mid n$}, \\-p^{(k,n)} &\text{ if } d \nmid 2l \text{ and } d \mid 4l \text{ and } p \nmid n,\\(p-1)p^{(k,n)} &\text{ if } d \nmid 2l \text{ and } d \mid 4l \text{ and } p \mid n. 
	\end{cases}$$
\end{thm}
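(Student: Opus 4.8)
The plan is to reduce, case by case, to point counts already established, via Theorem~\ref{reduction-thm} and Lemma~\ref{isooverp}; there are two regimes, $p\nmid n$ and $p\mid n$. Consider first $p\nmid n$. By Corollaries~\ref{ckperiod1} and~\ref{ckperiod2} the Weil numbers of $C_k$ over $\mathbb F_p$ are $\sqrt p$ times $s$-th roots of unity with $s=4l$, so Theorem~\ref{reduction-thm} applies with this $s$. Put $d=(n,4l)$ and $n=dt$; since $p\nmid n$ we have $p\nmid d$ and $p\nmid t$, so Theorem~\ref{reduction-thm} gives that $-p^{-n/2}[\#C_k(\mathbb F_{p^n})-(p^n+1)]$ equals $-p^{-d/2}[\#C_k(\mathbb F_{p^d})-(p^d+1)]$ when $d$ is even, and equals that same quantity times $\left(\frac{(-1)^{(t-1)/2}t}{p}\right)$ when $d$ is odd. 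Since $d\mid 4l$ and $p\nmid d$, the number $\#C_k(\mathbb F_{p^d})$ is one of the ``relatively prime to $p$'' entries of Corollary~\ref{cor-Ck}, selected according to which of $d\mid l$, $\ d\nmid l$ with $\tfrac d2\mid l$, $\ d\nmid 2l$ with $d\mid 4l$ holds (and, when $d\mid l$ and $d$ is even, according to the ``$2\mid\mid d$ and $p\equiv 3\bmod 4$'' versus ``$4\mid d$ or $p\equiv 1\bmod 4$'' split); in each of these $\#C_k(\mathbb F_{p^d})-(p^d+1)$ is an explicit signed monomial in $p$ and $-p^{-d/2}[\#C_k(\mathbb F_{p^d})-(p^d+1)]$ simplifies at once.

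Two genuine identities are needed to finish the $p\nmid n$ case. When $d$ is odd, the above simplification gives $-\left(\frac{(-1)^{(d-1)/2}d}{p}\right)\sqrt p$, and after multiplying by the reduction-theorem factor one must check
\[
\left(\frac{(-1)^{(d-1)/2}d}{p}\right)\left(\frac{(-1)^{(t-1)/2}t}{p}\right)=\left(\frac{(-1)^{(n-1)/2}n}{p}\right);
\]
this follows from multiplicativity of the Legendre symbol together with $\tfrac{d-1}{2}+\tfrac{t-1}{2}\equiv\tfrac{n-1}{2}\pmod 2$, which holds because the two sides differ by $\tfrac{(d-1)(t-1)}{2}$, an even integer since $d$ and $t$ are odd. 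When $d$ is even, one must instead identify the sign: from $d\mid 4l$ one has $v_2(4l)\ge 2$, so the case hypothesis pins down $v_2(n)$ in terms of $v_2(d)$ (for instance $2\mid\mid d$ forces $v_2(n)=1$, while $4\mid d$ forces $4\mid n$), and a short computation of $v_2\!\big(n(p-1)/4\big)$ shows that $(-1)^{n(p-1)/4}$ is exactly the sign dictated by the split in Corollary~\ref{cor-Ck}.

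Now suppose $p\mid n$. By Lemma~\ref{isooverp} we have $\#C_k(\mathbb F_{p^n})=\#B_k(\mathbb F_{p^n})$, so $-p^{-n/2}[\#C_k(\mathbb F_{p^n})-(p^n+1)]$ is precisely the quantity computed in the final (point-count) theorem of Section~\ref{sectbk} for $B_k$, with $(n,4k)$ playing the role of $d$. It remains to rewrite that value in the form claimed here. Since $p\mid l$ in all cases, $(n,4l)=(n,4k)$ if $p\mid k$ and $(n,4l)=p\cdot(n,4k)$ if $p\nmid k$; under this correspondence each of the four cases of the $B_k$ theorem (the two with $(n,4k)\mid k$, the one with $(n,4k)\nmid k$ and $(n,4k)/2\mid k$, and the one with $(n,4k)\nmid 2k$ and $(n,4k)\mid 4k$) maps to one of the four ``$p\mid n$'' cases of the present theorem, and the exponent $(k,n)$ transfers unchanged.

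The main obstacle is purely organisational: keeping the eight-way case split of the conclusion aligned with the internal sub-cases of Corollary~\ref{cor-Ck} (respectively of the $B_k$ theorem) and the three branches of Theorem~\ref{reduction-thm}, while reconciling the three Legendre-symbol normalisations $(-1)^{(d-1)/2}$, $(-1)^{(t-1)/2}$, $(-1)^{(n-1)/2}$ and tracking the parity factor $(-1)^{n(p-1)/4}$ through the even cases. Beyond elementary $2$-adic valuation arguments and multiplicativity of the Legendre symbol, no further input is required once this dictionary between the hypotheses is in place.
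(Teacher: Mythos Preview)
Your proposal is correct and follows essentially the same approach as the paper, which simply cites Corollary~\ref{cor-Ck} and Theorem~\ref{reduction-thm}; you have supplied the bookkeeping that the paper leaves implicit, including the Legendre-symbol identity and the $2$-adic tracking of $(-1)^{n(p-1)/4}$. The one organisational difference is that for $p\mid n$ you bypass Corollary~\ref{cor-Ck} and go directly through Lemma~\ref{isooverp} to the $B_k$ theorem, whereas the paper routes everything through Corollary~\ref{cor-Ck}; since the ``$p\mid d$'' entries of Corollary~\ref{cor-Ck} are themselves derived from the $B_k$ counts via Lemma~\ref{isooverp}, this is the same computation unpacked one level.
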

\begin{proof}
	It follows by Corollary \ref{cor-Ck} and Theorem \ref{reduction-thm}.
\end{proof}
	
\section{Divisibility Property of the Curves $C_k$}\label{proofconj}

In this section we will prove  Conjecture \ref{conj}. 
The proof will be broken into a few parts. 
The first part is to show that the L-polynomial of $C_k$
divides the L-polynomial of $C_{2k}$.
The next part is to show that the L-polynomial of $C_k$
divides the L-polynomial of $C_{tk}$ where $t$ is odd.
Finally, these results are combined to prove the conjecture.

\begin{lemma}\label{divisibility-lemma-un} Let $k$ be a positive integer. Define 
$$s=\begin{cases}
	8k &\text{if }p\mid k,\\
	8kp &\text{if }p\nmid k.
	\end{cases}$$ \\
	For $n\geq 1$ define 
	$$U_n=-p^{-n/2}[\#C_{2k}(\mathbb F_{p^n})-\#C_{k}(\mathbb F_{p^n})]$$and 
	write $U_n$ as a linear combination  
	of the $s$-th roots of unity as
	$$U_n=\sum_{j=0}^{s-1}u_jw_{s}^{-jn}.$$ 
	Then we have $$u_n \ge 0$$ for all $n \in \{ 0,1,\cdots, s-1\}$. 
\end{lemma}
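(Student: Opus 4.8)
The plan is to use the explicit formula for $-p^{-n/2}[\#C_k(\mathbb F_{p^n})-(p^n+1)]$ from Theorem \ref{finalck}, applied both with parameter $k$ and with parameter $2k$, and subtract. Writing $l_k$ and $l_{2k}$ for the quantity ``$l$'' of Theorem \ref{finalck} associated to $k$ and to $2k$ respectively, one has $l_{2k}\mid 4k$ or $l_{2k}\mid 4kp$, so in all cases $4l_k \mid s$ and $4l_{2k}\mid s$; hence both point-count functions, and therefore $U_n$, are genuinely functions of $n \bmod s$ (using Corollaries \ref{ckperiod1} and \ref{ckperiod2} to know the Weil numbers of $C_k$ and $C_{2k}$ are $\sqrt p$ times $s$-th roots of unity). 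Thus $U_n = \bigl(-p^{-n/2}[\#C_{2k}(\mathbb F_{p^n})-(p^n+1)]\bigr) - \bigl(-p^{-n/2}[\#C_{k}(\mathbb F_{p^n})-(p^n+1)]\bigr)$ is a well-defined $s$-periodic sequence, and the coefficients $u_j$ are recovered by the Inverse Discrete Fourier Transform: $u_j = \frac1s\sum_{n=0}^{s-1} U_n w_s^{jn}$.

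First I would set up the case division of $n \bmod s$ exactly along the lines of Theorem \ref{finalck}, keeping track of the gcd $d=(n,4l_{2k})$ relative to $l_{2k}$, the gcd $(n,4l_k)$ relative to $l_k$, the parities, and whether $p\mid n$. The key structural point is that $C_{2k}$ ``sees more'' than $C_k$: the divisor lattice for $2k$ refines that for $k$, so on the $n$ for which the $C_k$-term is already in its ``generic'' regime the two terms tend to agree or the $C_{2k}$ term is the larger (in the appropriate signed sense), while the genuinely new contributions — those $n$ where $d\nmid 2l_{2k}$ but the corresponding condition fails for $l_k$, giving a $\pm p^{(2k,n)}$ or $\pm(p-1)p^{(2k,n)}$ term for $C_{2k}$ against a bounded term for $C_k$ — are dominated by the large power of $p$. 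Then I would compute $U_n$ explicitly in each regime, express it as a short $\mathbb Z$-linear combination of ``indicator'' periodic functions (each of which is itself a known linear combination of $s$-th roots of unity with nonnegative Fourier coefficients, e.g. $\sum_{d\mid m} \frac{1}{m}\sum_{\gcd(n,\cdot)} \cdots$ type Ramanujan-sum identities), and verify termwise that the resulting $u_j$ are $\ge 0$.

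The main obstacle I expect is the bookkeeping for the ``$d\nmid 2l$, $d\mid 4l$'' stratum, where the point counts involve $p^{(k,n)}$ or $p^{3d/4}$: here $U_n$ is not a difference of bounded quantities, so one cannot argue by a crude size bound on the Fourier side; instead one must show that the large positive contribution from the $C_{2k}$ term, after the IDFT averaging, dominates the (possibly negative) spread-out contribution from the $C_k$ term over the relevant residues. I would handle this by grouping the relevant $n$ according to $(k,n)$ and using that for fixed valuation $v=v_p((k,n))$ the set of qualifying $n$ is a union of arithmetic progressions whose characteristic function has a clean Fourier expansion; the Legendre-symbol (odd, $p\nmid n$) cases contribute $\pm\sqrt p\,\left(\frac{\cdot}{p}\right)$ terms whose Fourier coefficients are Gauss-sum quantities of absolute value $\le \sqrt p/\sqrt{\text{(modulus)}}$, hence too small to make any $u_j$ negative once the genuinely larger $p$-power terms are present. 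I would also isolate the ``all $p\mid n$'' slice separately, since there $C_k$ and $C_{2k}$ restrict (via Lemma \ref{isooverp}) to $B_k$ and $B_{2k}$, and the nonnegativity there follows from the analogous — and simpler — comparison of the $B$-family formulas.
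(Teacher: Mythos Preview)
Your overall strategy---compute $U_n$ from Theorem \ref{finalck} and recover the $u_j$ by the inverse DFT---is exactly what the paper does, but your execution plan is considerably more elaborate than necessary, and in one place mis-anticipates the difficulty. The first thing the paper observes (and which you should compute before planning further) is that once you write $k=2^v t$ with $t$ odd, the formulas for $C_k$ and $C_{2k}$ in Theorem \ref{finalck} agree whenever $2^{v+1}\nmid n$; in particular $U_n=0$ for all odd $n$, so the Legendre-symbol terms cancel completely and there are no Gauss sums to estimate. What remains is a short explicit piecewise description of $U_n$ supported on $2^{v+1}\mid n$, with the single value $U_s=U_0$ of order $(p-1)p^{2k}$ and every other nonzero value of size at most $(p-1)p^{k}$. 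The paper then simply applies the triangle inequality to the IDFT sum: the one dominant term $(p-1)(p^{2k}-p^k)$ beats the sum of the $O(t)$ or $O(tp)$ smaller terms, giving $u_n>-1$, hence $u_n\ge 0$ by integrality. No Ramanujan-sum decomposition or indicator-function Fourier analysis is needed. The one case where the crude bound does not apply is $p\nmid k$ and $p\mid n$; there the paper does not reduce to the $B$-family as you suggest, but instead shows directly that $u_n=0$ by grouping the IDFT sum into blocks of $p$ consecutive indices in which exactly one term carries a factor $(p-1)$ and the other $p-1$ terms carry the opposite sign, so each block sums to zero. Your ``isolate the $p\mid n$ slice via Lemma \ref{isooverp}'' idea is not wrong in spirit, but note that splitting the IDFT sum over $n$ does not by itself reduce the $C$-problem to the $B$-problem for the coefficients $u_j$; the paper's cancellation argument is more direct.
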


\begin{proof} Let $U_0=U_s$.
Write $k=2^vt$ where $v$ is a positive integer and $t$ is an odd integer.  
By Theorem  \ref{finalck} we have $$U_n=\begin{cases}
	0 &\text{if } 2^{v+1} \nmid n, \\
	-1+(-1)^{n(p-1)/4} &\text{if } 2^{v+1} \mid \mid n \text{ and } p \nmid n,\\
	(p-1)(1-(-1)^{n(p-1)/4})) &\text{if } 2^{v+1} \mid \mid n \text{ and } p \mid n,\\
	p^{(k,n)}-1 &\text{if } 2^{v+2} \mid \mid n \text{ and } p \nmid n,\\
	-(p-1)(p^{(k,n)}-1) &\text{if } 2^{v+2} \mid \mid n \text{ and } p \mid n,\\
	-(p^{(2k,n)}-p^{(k,n)}) &\text{if } 2^{v+3} \mid n \text{ and } p \nmid n,\\
	(p-1)(p^{(2k,n)}-p^{(k,n)}) &\text{if } 2^{v+3} \mid n \text{ and } p \mid n.
	\end{cases}$$ 
	
	If $k$ is divisible by $p$, by using  Inverse  Discrete Fourier Transform we have 
	\begin{align*}
	u_{n}&=\frac{1}{8k}\sum_{j=0}^{8k-1}U_jw_{8k}^{jn}\\
	&=\frac{1}{8k}\sum_{j=0}^{4t-1}U_{2^{v+1}j}w_{2t}^{jn}  \quad \textrm{because
	$U_n=0$ if $2^{v+1} \nmid n$}\\
	&\ge \frac{1}{8k}\left(U_0- \sum_{j=1}^{4t-1}|U_{2^{v+2}j}| \right) \quad \textrm{by the
	triangle inequality}\\
	&\ge \frac{1}{8k}\left[(p-1)(p^{2k}-p^{k})-(4t-1)(p-1)p^{k}\right]  \quad \textrm{because
	$U_s=U_0=(p-1)(p^{2k}-p^k)$}\\
	& \qquad \qquad \qquad \qquad \qquad \qquad\qquad \qquad \qquad \qquad \textrm{    and all others are $\le p^k(p-1)$} \\
	&= \frac{1}{8k}(p-1)p^{k}(p^k-4t)\\&\ge \frac{1}{8k}(p-1)p^{k}(p^k-4k)\\ &\ge 0. 
	\end{align*}
	
	If $k$ and $n$ are not divisible by $p$, by using the 
	Inverse  Discrete Fourier Transform we have 
	\begin{align*}
	u_{n}&=\frac{1}{8kp}\sum_{j=0}^{8kp-1}U_jw_{8kp}^{jn}\\
	&=\frac{1}{8kp}\sum_{j=0}^{4tp-1}U_{2^{v+1}j}w_{2tp}^{jn} \\
	&\geq \frac{1}{8kp}\left( U_0+\sum_{j=1}^{p-1}U_{2^{v+3}tj}w_p^j-\sum_{j=0,2t\nmid j}^{2tp-1}|U_{2^{v+2}j}|- \sum_{j=0}^{2tp-1}|U_{2^{v+1}(2j+1)}|\right).
	\end{align*}

 We have $$|U_{2^{v+1}(2j+1)}| \le  \begin{cases}
	2 &\text{ if } p\nmid (2j+1), \\
	2(p-1) &\text{ if } p\mid (2j+1). \\
	\end{cases}$$
	Since there are $2t$ (resp. $2t(p-1)$) integers which is (resp. not) divisible by $p$ between $0$ and $2tp-1$, we have $$\sum_{j=0}^{2tp-1}|U_{2^{v+1}(2j+1)}|\le 2t\cdot 2(p-1)+2t(p-1)\cdot 2=8t(p-1).$$
	Therefore, we have\begin{align*}
	u_n &\ge \frac{1}{8kp}\left[(p-1)(p^{2k}-p^{k})+(p^{2k}-p^{k})-(2tp-p)(p-1)p^{k}-8t(p-1)\right] \\
	&\ge \frac{1}{8k}p^{k}(p^k-(p-1)(2t-1))-\frac{p-1}{p}\\&> -1. 
	\end{align*}
		Since $u_n$ is an integer, we have $u_n\ge 0$.\\

		Assume $k$ is not divisible by $p$ and $n$ is divisible by $p$ and write $n=mp$.  
		We will show that $u_n=0$.
		By the Inverse  Discrete Fourier Transform we have
\begin{align*}
u_{n}&=\frac{1}{8kp}\sum_{j=0}^{8kp-1}U_jw_{8kp}^{jn}\\
&=\frac{1}{8kp}\sum_{j=0}^{4tp-1}U_{2^{v+1}j}w_{4t}^{jm}\\
&=\frac{1}{8kp}\sum_{j=0}^{4t-1}\left[
	\left(\sum_{i=0}^{p-1}U_{2^{v+1}(4ti+j)}\right)w_{2t}^{jm}\right].
	\end{align*}
	 Since $(4t,p)=1$, for any integer $j$ we have 
	 $$\{ 4ti+j \mod p \: |\: 0 \le i\le p-1\}=\{i \mod p\: | \:  0 \le i\le p-1 \}$$
	 and so exactly one of the $4ti+j$ is divisible by $p$.
	 Therefore, for each $j$,
	 \[
	 \sum_{i=0}^{p-1}U_{2^{v+2}(2ti+j)}=0
	 \]
	 because if $j$ is odd then one term is $(p-1)(1-(-1)^{n(p-1)/4})$ and
	 the other $p-1$ terms are $-1+(-1)^{n(p-1)/4}$, if $2\mid \mid j$ then one term is $-(p-1)(p^{(k,n)}-1)$ and
	 the other $p-1$ terms are $p^{(k,n)}-1$,
	 and if $4 \mid j$ is even then one term is $(p-1)(p^{(2k,n)}-p^{(k,n)})$ and
	 the other $p-1$ terms are $-(p^{(2k,n)}-p^{(k,n)})$.
	\end{proof}

We write $L(C_k)$ for $L_{C_k}$.	
	
\begin{cor}\label{divisibility-2k}
	Let $k$ be a positive integer. Then $$L(C_k) \mid L(C_{2k}).$$
\end{cor}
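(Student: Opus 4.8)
The plan is to convert the inequality $u_n \ge 0$ of Lemma \ref{divisibility-lemma-un} into a containment of the multiset of Weil numbers of $C_k$ inside that of $C_{2k}$, and then invoke the factorization of $L$-polynomials into Weil-number factors.

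First I would note that both curves are supersingular and that their normalized Weil numbers are $s$-th roots of unity, where $s$ is as in Lemma \ref{divisibility-lemma-un}: for $C_k$ this is Corollary \ref{ckperiod1} or \ref{ckperiod2}, and since $p$ is odd we have $p\mid k$ if and only if $p\mid 2k$, so the same corollaries give it for $C_{2k}$ with the same $s$ (the relevant periods $4k$, $4kp$, $8k$, $8kp$ all divide $s$). Writing $\eta_i=\sqrt{p}\,\xi_i$ and $\theta_j=\sqrt{p}\,\zeta_j$ for the Weil numbers of $C_k$ and $C_{2k}$ respectively, formula \eqref{eqn-sum of roots} gives
\[
U_n=-p^{-n/2}\big[\#C_{2k}(\mathbb F_{p^n})-\#C_k(\mathbb F_{p^n})\big]=\sum_j \zeta_j^{\,n}-\sum_i \xi_i^{\,n},
\]
which is a $\mathbb Z$-linear combination of powers of $s$-th roots of unity; in particular $U_{n+s}=U_n$, so $n\mapsto U_n$ is a well-defined function on $\mathbb Z/s\mathbb Z$ with $U_0=U_s=2g(C_{2k})-2g(C_k)$, consistent with the convention in the lemma.

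Next I would compare this with the expansion $U_n=\sum_{\ell=0}^{s-1}u_\ell w_s^{-\ell n}$ from Lemma \ref{divisibility-lemma-un}. The functions $n\mapsto w_s^{mn}$ on $\mathbb Z/s\mathbb Z$ are linearly independent, so matching coefficients yields
\[
u_\ell=\#\{\, j : \zeta_j=w_s^{-\ell} \,\}-\#\{\, i : \xi_i=w_s^{-\ell} \,\};
\]
that is, $u_\ell$ is the multiplicity of the root of unity $w_s^{-\ell}$ among the normalized Weil numbers of $C_{2k}$ minus its multiplicity among those of $C_k$. Since Lemma \ref{divisibility-lemma-un} gives $u_\ell\ge 0$ for every $\ell$, and since every normalized Weil number of $C_k$ is some $w_s^{-\ell}$, the multiset of Weil numbers of $C_k$ is contained in the multiset of Weil numbers of $C_{2k}$.

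Finally, since $L_{C_k}(T)=\prod_i(1-\sqrt{p}\,\xi_i T)$ and $L_{C_{2k}}(T)=\prod_j(1-\sqrt{p}\,\zeta_j T)$, the multiset containment gives $L_{C_k}(T)\mid L_{C_{2k}}(T)$ in $\mathbb C[T]$; as both polynomials lie in $\mathbb Z[T]$ with constant term $1$, Gauss's lemma forces the quotient into $\mathbb Z[T]$ as well, so $L(C_k)\mid L(C_{2k})$ as integer polynomials. The only delicate point is the bookkeeping of the second and third paragraphs — tracking the $p^{-n/2}$ normalization, the period $s$, and the signs so that $u_\ell\ge 0$ really points in the direction ``the Weil numbers of $C_k$ sit inside those of $C_{2k}$'' and not the reverse; the substantive estimates have all been carried out already in Lemma \ref{divisibility-lemma-un}.
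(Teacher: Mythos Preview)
Your argument is correct and is precisely the approach the paper takes: the one-sentence proof in the paper asserts that Lemma \ref{divisibility-lemma-un} forces each root of $L(C_k)$ to appear in $L(C_{2k})$ with at least the same multiplicity, and you have simply written out in detail why $u_\ell\ge 0$ translates into this multiplicity comparison via the identification of $u_\ell$ with a difference of Weil-number multiplicities. There is no substantive difference between your route and the paper's.
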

\begin{proof}
	Lemma \ref{divisibility-lemma-un} shows that the multiplicity of each root of $L(C_k)$ is smaller than or equal to 
	its multiplicity as a root of $L(C_{2k})$.
\end{proof}

  \begin{lemma}\label{divisiblity-odd}
	Let $k$ be an integer and $t$ be an odd integer. Then $$L(C_k)\mid L(C_{kt}).$$
 \end{lemma}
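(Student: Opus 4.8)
The plan is to mimic the proof of Lemma~\ref{divisibility-lemma-un} and Corollary~\ref{divisibility-2k}, \ie to compare normalized point counts through the Inverse Discrete Fourier Transform rather than to produce a map of curves (which we have not found when $p>2$). We may assume $t\ge3$, the case $t=1$ being trivial. Put $A_n^{(j)}=-p^{-n/2}[\#C_j(\F_{p^n})-(p^n+1)]$, so that $A_n^{(j)}=\sum_i\zeta_i^n$ where the $\zeta_i$ are the Weil numbers of $C_j$ divided by $\sqrt p$; these are roots of unity since $C_j$ is supersingular. Let $L=kt$ if $p\mid kt$ and $L=ktp$ otherwise (the period parameter of $C_{kt}$ from Theorem~\ref{finalck}), and let $l$ be the analogous parameter of $C_k$. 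One checks $l\mid L$, so both $C_k$ and $C_{kt}$ become minimal over $\F_{p^{4L}}$, and $V_n:=A_n^{(kt)}-A_n^{(k)}=-p^{-n/2}[\#C_{kt}(\F_{p^n})-\#C_k(\F_{p^n})]$ is periodic of period $4L$ with $V_0=V_{4L}=2g_{kt}-2g_k=(p-1)(p^{kt}-p^k)$. Writing $V_n=\sum_{j=0}^{4L-1}v_jw_{4L}^{-jn}$ by the Inverse Discrete Fourier Transform, the claim $L(C_k)\mid L(C_{kt})$ is equivalent to $v_j\ge0$ for all $j$, since the $v_j$ record the change in the multiplicity of each Weil number.

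The reason this is the tractable half of Conjecture~\ref{conj} is that $t$ is odd, so $v_2(kt)=v_2(k)=:v$; write $kt=2^vm$ with $m$ odd (so $m\ge3$). In Theorem~\ref{finalck} the three alternatives ``$d\mid l$'', ``$d\nmid l$ and $\tfrac d2\mid l$'', ``$d\nmid 2l$ and $d\mid 4l$'' become equivalent to $v_2(n)\le v$, $v_2(n)=v+1$, $v_2(n)\ge v+2$ respectively, and the same holds for $C_{kt}$ with $l$ replaced by $L$; moreover, in the first two alternatives the values produced by Theorem~\ref{finalck} (a Legendre symbol times $\sqrt p$, or $\pm1$, or $\pm(p-1)$) depend only on $n$, not on $l$ or $k$. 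Comparing the applicable rows of Theorem~\ref{finalck} term by term therefore gives $V_n=0$ unless $v_2(n)\ge v+2$, in which case $V_n=(p-1)(p^{(kt,n)}-p^{(k,n)})$ when $p\mid n$ and $V_n=p^{(k,n)}-p^{(kt,n)}$ when $p\nmid n$. Thus $V_n$ is supported on the multiples of $2^{v+2}$ in one period, which is exactly the sparseness that drove Lemma~\ref{divisibility-lemma-un}; it is even cleaner here because the ``level cutoffs'' of $C_k$ and $C_{kt}$ coincide, whereas those of $C_{2k}$ are shifted.

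It remains to run the Inverse Discrete Fourier Transform over the support, in two regimes. If $p\mid kt$, then $L=kt$ and the surviving $n\ne0$ are $2^{v+2}\cdot\{1,\dots,m-1\}$; each has $(kt,n)\le kt/q\le kt/3$, where $q\ge3$ is the least prime factor of $m$, so $|V_n|\le(p-1)p^{kt/3}$, and the triangle inequality gives $v_j\ge\frac1{4L}\bigl((p-1)(p^{kt}-p^k)-m(p-1)p^{kt/3}\bigr)\ge0$ because $kt\ge3$ forces $m\,p^{kt/3}<p^{kt}-p^k$. If $p\nmid kt$, then $L=ktp$ and the surviving $n\ne0$ are $2^{v+2}\cdot\{1,\dots,mp-1\}$; the only ones with $(kt,n)=kt$ occur at $n=2^{v+2}im$, $i=1,\dots,p-1$, where also $(k,n)=k$, so these ``large'' terms contribute $\frac1{4L}(p^k-p^{kt})\sum_{i=1}^{p-1}w_p^{-ji}$. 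When $p\nmid j$ this equals $\frac1{4L}(p^{kt}-p^k)$ and thus \emph{reinforces} $V_0$ rather than cancelling it, leaving $v_j\ge\frac1{4L}\bigl(p(p^{kt}-p^k)-\sum|V_n|\bigr)>-1$ once the remaining terms (each $\le(p-1)p^{kt/3}$, fewer than $pm$ of them) are bounded, whence $v_j\ge0$. When $p\mid j$ one instead groups the surviving $n$ into blocks of $p$ by the residue of $n/2^{v+2}$ modulo $m$; on each block $(k,n)$ and $(kt,n)$ are constant, exactly one $n$ is divisible by $p$, and the weights $w_{4L}^{-jn}$ are constant, so the block sums to $(p-1)(p^{(kt,n)}-p^{(k,n)})+(p-1)(p^{(k,n)}-p^{(kt,n)})=0$ and $v_j=0$. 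In every case $v_j\ge0$, so by supersingularity (Sections~\ref{morec} and~\ref{supsing}) every Weil number of $C_k$ occurs among those of $C_{kt}$ with at least the same multiplicity, \ie $L(C_k)\mid L(C_{kt})$; combined with Corollary~\ref{divisibility-2k} this proves Conjecture~\ref{conj}.

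The step I expect to be the main obstacle is making the last paragraph uniform and tight: one must keep track of which case one is in ($p\mid k$, or $p\mid t$ but $p\nmid k$, or $p\nmid kt$ --- these change $L$, the period, and the count of surviving Fourier terms), confirm in the $p\nmid kt$ case that the large terms really line up with $V_0$ rather than against it, and check that the residual triangle-inequality estimate beats $V_0$ even in the smallest cases (the tightest being $kt=3$, where the support has only a couple of nonzero terms, and the degenerate $m=1$ case, \ie $t=1$, where it is vacuous). None of this is deep, but it is the same careful bookkeeping that occupies the proof of Lemma~\ref{divisibility-lemma-un}, and it is where the length of a full write-up would go.
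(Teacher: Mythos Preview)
Your strategy is sound and, once the bookkeeping you flag in the last paragraph is carried out, the argument goes through: the key observation that $v_2(kt)=v_2(k)$ forces the first two alternatives of Theorem~\ref{finalck} to coincide for $C_k$ and $C_{kt}$ is correct, the resulting formula for $V_n$ is right, and the block-cancellation for $p\mid j$ together with the triangle-inequality estimate for $p\nmid j$ do yield $v_j\ge 0$ in every regime (I checked the tight cases $kt=3$ by hand).

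However, the paper's proof is entirely different and much shorter. It \emph{does} produce a map of curves, contrary to your premise: one first passes to the isomorphic model $X_k:\,y^p-y=x^{p^k+1}-4^{-1}$ via $(x,y)\mapsto\bigl(x-\tfrac12,\;y-\tfrac12\sum_{i=0}^{k-1}x^{p^i}\bigr)$, and then, since $t$ odd implies $p^k+1\mid p^{kt}+1$, the map $(x,y)\mapsto\bigl(x^{(p^{kt}+1)/(p^k+1)},\,y\bigr)$ sends $X_{kt}$ onto $X_k$. Kleiman--Serre (Theorem~\ref{chapman:KleimanSerre}) finishes in one line. So the odd-$t$ case is precisely the half of the conjecture where a covering \emph{is} available; the Fourier-transform machinery is reserved for the doubling step, where no map is known. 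What your approach buys is uniformity with Lemma~\ref{divisibility-lemma-un} and independence from any geometric construction, at the cost of repeating the same delicate estimates; what the paper's approach buys is a two-line proof once the isomorphism $C_k\cong X_k$ is spotted.
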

 
 \begin{proof}
Let	$X_k: y^p-y=x^{p^k+1}-4^{-1}$ over $\mathbb F_{p}$ and check that
\[
(x,y) \mapsto \left(x-\frac12, y-\frac12\sum_{i=0}^{k-1}x^{p^{i}}\right)
\]
is a map $X_k \longrightarrow C_k$.
The map is defined everywhere and is invertible, so $C_k$ is isomorphic to $X_k$, and hence
 $L(C_k)=L(X_k).$ Therefore, it is enough to show that $L(X_k) \mid L(X_{kt})$. 
 	
 	Since $t$ is odd, $p^k+1$ divides $p^{kt}+1$ and therefore there is a map 
	of curves $X_{kt} \longrightarrow X_k$ given by
	$(x,y)\to (x^{(p^{kt}+1)/(p^k+1)},y)$. Hence 
	$L(X_k) \mid L(X_{kt})$ by Theorem \ref{chapman:KleimanSerre}.
 \end{proof}

\begin{thm}\label{wholeconj}
	Let $k$ and $m$ be positive integers. Then $$L(C_k) \mid L(C_{km}).$$
\end{thm}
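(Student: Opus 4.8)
The plan is to deduce the general divisibility from the two special cases already in hand, namely Corollary \ref{divisibility-2k} (that $L(C_k)\mid L(C_{2k})$) and Lemma \ref{divisiblity-odd} (that $L(C_k)\mid L(C_{kt})$ whenever $t$ is odd), together with the fact that divisibility of polynomials is transitive.

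Concretely, I would write $m=2^a t$ with $a\ge 0$ and $t$ odd. First, applying Corollary \ref{divisibility-2k} in turn to the indices $k,\,2k,\,4k,\,\dots,\,2^{a-1}k$ produces the chain $L(C_k)\mid L(C_{2k})\mid L(C_{4k})\mid\cdots\mid L(C_{2^{a}k})$, and hence $L(C_k)\mid L(C_{2^{a}k})$. Next, applying Lemma \ref{divisiblity-odd} with base index $2^{a}k$ and odd integer $t$ gives $L(C_{2^{a}k})\mid L(C_{2^{a}k\,t})=L(C_{km})$. Chaining these two divisibilities yields $L(C_k)\mid L(C_{km})$, which is the claim.

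I do not expect a genuine obstacle here: all the real work has already been done, in Lemma \ref{divisibility-lemma-un} and Corollary \ref{divisibility-2k} (where the positivity of the relevant inverse Fourier coefficients is the crux, ultimately resting on the exact point count of Theorem \ref{finalck}) and in Lemma \ref{divisiblity-odd} (where the explicit morphism $X_{kt}\to X_k$ and the Kleiman--Serre Theorem \ref{chapman:KleimanSerre} do the job). The only things to check are the degenerate cases: when $a=0$ the first chain is empty and the statement is exactly Lemma \ref{divisiblity-odd}, and when $t=1$ it is exactly the iterated Corollary \ref{divisibility-2k}; both are subsumed by the argument above. If desired, the combination step can instead be phrased as an induction on $m$, splitting according to whether $m$ is even (use Corollary \ref{divisibility-2k} on $C_{m/2\cdot k}$ plus the inductive hypothesis) or odd (use Lemma \ref{divisiblity-odd} directly), but the two-step reduction above is cleaner.
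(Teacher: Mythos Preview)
Your proposal is correct and essentially identical to the paper's proof: both factor $m=2^a t$ with $t$ odd and combine Corollary~\ref{divisibility-2k} (iterated doubling) with Lemma~\ref{divisiblity-odd} (odd multiples) via transitivity. The only cosmetic difference is the order---the paper applies the odd step first and then doubles, while you double first and then apply the odd step---which is immaterial.
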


\begin{proof}
	If $m=1$, then the result is trivial. Assume $m\ge 2$ and write $m=2^st$ where $t$ is odd. Since $t$ is odd, by Lemma \ref{divisiblity-odd} we have $$L(C_k)\mid L(C_{kt})$$ and by Corollary  \ref{divisibility-2k} we have $$L(C_{2^{i-1}kt})\mid L(C_{2^{i}kt})$$ for all $i\in\{1,\cdots,s\}$. Hence $$L(C_k) \mid L(C_{km}).$$
\end{proof}

\section{Remark on the Divisibility Property of the Curves $B_k$}\label{Bkmaps}

The divisibility property of the curves $B_k$ can be proved in the same way as for $C_k$.
For odd $t$, we have a natural map from $B_{tk}$ to $B_{k}$ which sends $(x,y)$ to 
$(x^{(p^{kt}+1)/(p^k+1)},y)$. 
We are unable to find a map from $B_{2k}$ to $B_k$, so we use a similar argument.
For $n\ge 1$ we can define 
$$U_n=-p^{-n/2}[\#B_{2k}(\mathbb F_{p^n})-\#B_{k}(\mathbb F_{p^n})]=\begin{cases}
0 &\text{if } 2^{v+1} \nmid n, \\
(p-1)(1-(-1)^{n(p-1)/4})) &\text{if } 2^{v+1} \mid \mid n \text{ and } p \mid n,\\
-(p-1)(p^{(k,n)}-1) &\text{if } 2^{v+2} \mid \mid n \text{ and } p \mid n,\\
(p-1)(p^{(2k,n)}-p^{(k,n)}) &\text{if } 2^{v+3} \mid n \text{ and } p \mid n.
\end{cases}$$ and can similarly show that $u_j\ge 0$ for all $j\in\{0,\cdots,4k-1\}$ where 	
$$U_n=\sum_{j=0}^{4k-1}u_jw_{4k}^{-jn}.$$
We write $L(B_k)$ for $L_{B_k}$.

	\begin{thm}
		Let $k$ and $m$ be positive integers. Then $$L(B_k) \mid L(B_{km}).$$
	\end{thm}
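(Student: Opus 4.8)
The plan is to mirror the proof of Theorem~\ref{wholeconj} exactly. First I would record the two building blocks. The odd-multiple case is immediate: since $t$ is odd, $p^k+1 \mid p^{kt}+1$, so $(x,y)\mapsto(x^{(p^{kt}+1)/(p^k+1)},y)$ is a surjective morphism $B_{kt}\longrightarrow B_k$ over $\F_p$, and Theorem~\ref{chapman:KleimanSerre} gives $L(B_k)\mid L(B_{kt})$. The power-of-two case is handled by the $U_n$ computation already sketched in this section: one uses Corollary~\ref{cor-bk} (or rather the corresponding ``final theorem'' for $B_k$) to evaluate $U_n=-p^{-n/2}[\#B_{2k}(\F_{p^n})-\#B_k(\F_{p^n})]$, checks it is supported on multiples of $2^{v+1}$ where $k=2^v t$, and applies the Inverse Discrete Fourier Transform to get the coefficients $u_j$ in $U_n=\sum_{j=0}^{4k-1}u_j w_{4k}^{-jn}$. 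The nonnegativity $u_j\ge 0$ then says the multiplicity of every Weil number of $B_k$ is at most its multiplicity as a Weil number of $B_{2k}$, which is precisely $L(B_k)\mid L(B_{2k})$.

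Concretely, I would state and prove the analogue of Corollary~\ref{divisibility-2k}: for every positive integer $k$, $L(B_k)\mid L(B_{2k})$. The argument is a triangle-inequality estimate just like in Lemma~\ref{divisibility-lemma-un}. Note that $B_k$ is minimal over $\F_{p^{4k}}$ by Corollary~\ref{bkperiod}, so all Weil numbers of $B_k$ and of $B_{2k}$ are $\sqrt{p}$ times a $4k$-th (resp.\ $8k$-th) root of unity, and the relevant period for $U_n$ is $s=8k$ if $p\mid k$ and $s=8kp$ otherwise; in fact, because every nonzero value of $U_n$ in the displayed formula already carries a factor $p\mid n$, one can work with period $4k$ as indicated. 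The dominant term is $U_0=U_s=(p-1)(p^{2k}-p^k)$, coming from the $2^{v+3}\mid n$, $p\mid n$ branch evaluated at $n=s$, and every other $U_{2^{v+1}j}$ is bounded in absolute value by $(p-1)p^k$; summing at most $4t-1$ such terms and dividing by $8k$ (or $8kp$) gives $u_j\ge\frac{1}{8k}(p-1)p^k(p^k-4t)\ge \frac{1}{8k}(p-1)p^k(p^k-4k)\ge 0$, using $t\le k$ and $p^k\ge 4k$ for $p$ odd. When $p\nmid k$ and $p\mid n$ one shows $u_n=0$ by the same pairing trick used in Lemma~\ref{divisibility-lemma-un}: group the $p$ indices $4ti+j$ modulo $p$, exactly one of which is divisible by $p$, and observe each group sums to zero.

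Finally, I would assemble the two ingredients exactly as in Theorem~\ref{wholeconj}. Write $m=2^s t$ with $t$ odd. By the odd-multiple map, $L(B_k)\mid L(B_{kt})$; by the power-of-two result applied repeatedly, $L(B_{2^{i-1}kt})\mid L(B_{2^i kt})$ for $i=1,\dots,s$; chaining these divisibilities yields $L(B_k)\mid L(B_{km})$.

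I expect the only real obstacle to be bookkeeping rather than mathematics: one must pin down the precise formula for $\#B_{2k}(\F_{p^n})$ in terms of $d=(n,8k)$ (the $B_k$ analogue of Theorem~\ref{finalck}), correctly identify which $2$-adic valuation of $n$ lands in which branch after replacing $k$ by $2k$, and verify the elementary inequality $p^k\ge 4k$ together with the triangle-inequality bound. The pairing argument establishing $u_n=0$ for $p\mid n$, $p\nmid k$ is the subtlest point, but it is a verbatim repetition of the corresponding step in Lemma~\ref{divisibility-lemma-un}, so no new idea is needed.
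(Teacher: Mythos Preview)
Your proposal is correct and follows exactly the paper's approach: odd multiples via the explicit cover $(x,y)\mapsto(x^{(p^{kt}+1)/(p^k+1)},y)$ and Kleiman--Serre, doubling via the inverse DFT nonnegativity argument, then chaining. One simplification you should make: for the $B_k$ family the point-count formula (the theorem at the end of Section~\ref{sectbk}) has no dependence on whether $p\mid n$, so the period of $U_n$ is simply $8k$ regardless of $k$, and the pairing trick for the $p\mid n$ coefficients is not needed here---the triangle-inequality estimate alone suffices.
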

The proof is  similar to the proof of Theorem \ref{wholeconj}.

\section{Opposite Direction}\label{opp}

In this section, we will prove that the opposite directions of the divisibility theorems for $B_k$ and $C_k$ are also valid.
To be precise, we have shown that if $k$ divides $\ell$ then the L-polynomial
of $C_k$ (or $B_k$) divides the L-polynomial of $C_\ell$ (or $B_\ell$).
We now prove that if $k$ does not divide $\ell$ then the L-polynomials do not divide.

Let $X$ be a supersingular curve defined over $\F_q$.
The smallest positive integer $s=s_X$ such that $\zeta_i^{s}=1$ for all 
 $i=1,\ldots	,2g$ will be called the \emph{period} of $X$. 
The period depends on $q$, in the sense that $X(\F_{q^n})$
may have a different period to $X(\F_{q})$.

\begin{prop}\label{divisiblity-period}
	Let $C$ and $D$ be supersingular curves over $\mathbb F_q$. If $L(C)$ divides $L(D)$, then $s_C$ divides $s_D$.
\end{prop}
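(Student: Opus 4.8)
The plan is to work directly with the Weil numbers (Frobenius eigenvalues) of the two curves. Since $C$ and $D$ are supersingular, each Weil number has the form $\sqrt{q}\,\zeta$ with $\zeta$ a root of unity, and the period $s_C$ (resp.\ $s_D$) is by definition the least common multiple of the orders of all the $\zeta_i$ occurring for $C$ (resp.\ $D$). First I would recall that the $L$-polynomial and the reciprocal polynomial $L_X^*(T)=T^{2g}L_X(1/T)=\prod_{i}(T-\sqrt{q}\,\zeta_i)$ carry the same divisibility information: $L(C)\mid L(D)$ in $\mathbb{Z}[T]$ if and only if $L_C^*\mid L_D^*$, since reversing coefficients is multiplicative on polynomials with nonzero constant term and both $L$-polynomials have constant term $1$.

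Next I would use the fact that $L(C)\mid L(D)$ forces every root of $L_C^*$ to be a root of $L_D^*$ (with at least the same multiplicity); in particular the multiset of Weil numbers of $C$ is a sub-multiset of the multiset of Weil numbers of $D$. Hence every root of unity $\zeta$ that appears as $\eta_i/\sqrt{q}$ for some Weil number $\eta_i$ of $C$ also appears as $\eta_j/\sqrt{q}$ for some Weil number of $D$. Therefore the order of each such $\zeta$ divides $s_D$, and taking the least common multiple over all Weil numbers of $C$ gives $s_C \mid s_D$.

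The one point requiring a little care is that the $L$-polynomials are defined over $\mathbb{F}_q$ and the roots $\sqrt{q}\,\zeta_i$ are algebraic numbers, so I should note that divisibility of $L(D)$ by $L(C)$ in $\mathbb{Z}[T]$ (equivalently in $\mathbb{Q}[T]$) does imply containment of root multisets over $\overline{\mathbb{Q}}$; this is just the standard fact that if $f\mid g$ in $\mathbb{Q}[T]$ then every root of $f$, counted with multiplicity, is a root of $g$. There is no genuine obstacle here: the main (very mild) subtlety is simply keeping straight the passage between $L_X(T)$ and its reciprocal $L_X^*(T)$ so that ``roots of the reciprocal of the $L$-polynomial'' (the Weil numbers, as the paper defines them in Section~\ref{morec}) are the objects whose multiset inclusion we are asserting. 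Once that bookkeeping is in place, the conclusion $s_C\mid s_D$ is immediate from the definition of the period as the lcm of the orders of the associated roots of unity.
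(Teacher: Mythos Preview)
Your proposal is correct and follows essentially the same approach as the paper: from $L(C)\mid L(D)$ deduce that every Weil number of $C$ is a Weil number of $D$, hence each associated root of unity $\zeta$ for $C$ satisfies $\zeta^{s_D}=1$, and taking the least common multiple gives $s_C\mid s_D$. The only difference is that you spell out the passage between $L_X(T)$ and its reciprocal $L_X^*(T)$ more carefully than the paper does; the paper simply asserts that roots of $L(C)$ being roots of $L(D)$ implies the corresponding inclusion of Weil numbers, leaving that bookkeeping implicit.
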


\begin{proof}
	Since $L(C)$ divides $L(D)$, the roots of $L(C)$ are also roots of $L(D)$. Therefore, any Weil number of $C$ is also a Weil number of $D$. Let $\sqrt{q}\zeta$ be a Weil number for $C$. Since $\zeta^{s_D}=1$, the order of $\zeta$ divides  $s_D$. Since this happens for all Weil numbers of $C$, $s_C$ divides $s_D$.  	
\end{proof}

\begin{thm}\label{Bknotdiv}
		Let $k$ and $\ell $ be positive integers such that $k$ does not divide $\ell$. 
		Then $L(B_k)$ does not divide $L(B_{\ell })$.
	\end{thm}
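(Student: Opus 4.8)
The plan is to reduce everything to a comparison of periods via Proposition \ref{divisiblity-period}. Since $B_k$ and $B_\ell$ are supersingular curves over $\F_p$, that proposition shows that $L(B_k)\mid L(B_\ell)$ would force $s_{B_k}\mid s_{B_\ell}$. So I would first establish the clean formula $s_{B_k}=4k$ for every positive integer $k$; granting this, $L(B_k)\mid L(B_\ell)$ gives $4k\mid 4\ell$, hence $k\mid\ell$, contradicting the hypothesis, and we are done.

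To compute $s_{B_k}$, I would use that, by definition of the period, $s_{B_k}$ is the smallest $n$ with $\zeta^n=1$ for every Weil number $\sqrt p\,\zeta$ of $B_k$, which is the same as the smallest $n$ for which $B_k(\F_{p^n})$ is minimal, i.e.\ for which $-p^{-n/2}[\#B_k(\F_{p^n})-(p^n+1)]=\sum\zeta_i^n$ attains its maximum possible value $2g=p^k(p-1)$. Corollary \ref{bkperiod} already gives $s_{B_k}\mid 4k$, so the task is only to exclude the proper divisors. For this I would run through the four cases of the point-count theorem for $B_k$ with $d=(n,4k)$: in the first three the value is $0$, $\pm(p-1)$, or $-(p-1)$, none of which equals $p^k(p-1)$ since $k\ge 1$; in the last case the value is $(p-1)p^{(k,n)}$, which equals $p^k(p-1)$ exactly when $k\mid n$. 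Writing $k=2^vt$ with $t$ odd, the conditions $k\mid d$ and $d\mid 4k$ of that case force $d\in\{k,2k,4k\}$, and then $d\nmid 2k$ forces $d=4k$, i.e.\ $4k\mid n$. Thus $B_k(\F_{p^n})$ is minimal exactly when $4k\mid n$, so $s_{B_k}=4k$.

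Assembling the pieces: if $k\nmid\ell$ and $L(B_k)\mid L(B_\ell)$, then Proposition \ref{divisiblity-period} yields $s_{B_k}\mid s_{B_\ell}$, i.e.\ $4k\mid 4\ell$, i.e.\ $k\mid\ell$, a contradiction; hence $L(B_k)$ does not divide $L(B_\ell)$.

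I do not anticipate a real obstacle: the only content is the determination $s_{B_k}=4k$, and that is a routine bookkeeping check against the four cases of the point-count formula for $B_k$, the one mildly delicate point being to see that the case $d\nmid 2k$, $d\mid 4k$ together with $(k,n)=k$ pins $n$ down to a multiple of $4k$.
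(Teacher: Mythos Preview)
Your proof is correct and follows essentially the same route as the paper: reduce to a comparison of periods via Proposition~\ref{divisiblity-period} and use $s_{B_k}=4k$. In fact you are more careful than the paper, which simply asserts that the period equals $4k$ citing Corollary~\ref{bkperiod} (which only gives $s_{B_k}\mid 4k$); your case analysis against the point-count theorem for $B_k$ legitimately pins down the exact value.
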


\begin{proof}
By Corollary \ref{bkperiod} 
the period of $B_k$ is $4k$ and the period of $B_\ell$ is $4\ell$. Since $4k$ does not divide $4\ell$, we have $L(B_k)$ does not divide $L(B_\ell)$ by Proposition \ref{divisiblity-period}. 
\end{proof}

\begin{cor}
Let $k$ and $\ell $ be positive integers such that $k<\ell $ and $k$ does not divide $\ell$. 
Then there is no map from $B_\ell$ to $B_k$.
\end{cor}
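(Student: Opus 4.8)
The plan is to derive the statement immediately from Theorem \ref{Bknotdiv} together with the Kleiman--Serre theorem (Theorem \ref{chapman:KleimanSerre}).

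First I would argue by contradiction: suppose there were a non-constant morphism $B_\ell \longrightarrow B_k$ defined over $\F_p$. Since $B_\ell$ and $B_k$ are (smooth, projective, absolutely irreducible) curves, any non-constant morphism between them is surjective, so Theorem \ref{chapman:KleimanSerre} applies and gives $L(B_k) \mid L(B_\ell)$. On the other hand, the hypothesis that $k$ does not divide $\ell$ is exactly the hypothesis of Theorem \ref{Bknotdiv}, which asserts that $L(B_k)$ does \emph{not} divide $L(B_\ell)$. These two conclusions contradict each other, so no such morphism can exist.

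The hypothesis $k<\ell$ plays only a cosmetic role: when $k>\ell$ the genus $p^k(p-1)/2$ of $B_k$ is strictly larger than the genus $p^\ell(p-1)/2$ of $B_\ell$, so Riemann--Hurwitz already forbids a non-constant map $B_\ell \to B_k$ and nothing further is needed; the genuinely interesting range is $k<\ell$ with $k \nmid \ell$, where the genus inequality runs the wrong way and one really does need the period computation behind Theorem \ref{Bknotdiv}. I do not expect any obstacle here --- the corollary is a purely formal consequence of Theorem \ref{Bknotdiv} and the standard fact that a surjective morphism of curves over $\F_q$ forces divisibility of $L$-polynomials. The one point requiring care is the interpretation of the word ``map'': it must mean a non-constant morphism defined over the base field $\F_p$, since a constant map trivially exists and carries no information about $L$-polynomials.
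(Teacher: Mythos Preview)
Your argument is correct and is exactly the paper's approach: the corollary is deduced in one line from Theorem \ref{Bknotdiv} together with the Kleiman--Serre theorem (Theorem \ref{chapman:KleimanSerre}). Your additional remarks on the role of the hypothesis $k<\ell$ and on the meaning of ``map'' are accurate elaborations but go beyond what the paper records.
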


\begin{proof}
By the Kleiman-Serre theorem (Theorem \ref{chapman:KleimanSerre}) and Theorem \ref{Bknotdiv}.
\end{proof}

Now we turn to $C_k$.

\begin{lemma}\label{Cp-nmid-Cl}
	Let $\ell$ be a positive integer coprime to $p$. Then $L(C_p)$ does not divide $L(C_{\ell})$.
\end{lemma}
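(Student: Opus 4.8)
The plan is to exhibit a linear factor of $L(C_p)$ that does not occur in $L(C_\ell)$; this suffices, since if $L(C_p)\mid L(C_\ell)$ then every factor of $L(C_p)$ appears in $L(C_\ell)$ with at least the same multiplicity. The factor will be $1-\sqrt p\,T$, i.e. the Weil number $\sqrt p$. For a supersingular curve $X/\F_p$ all of whose normalised Weil numbers $\zeta$ satisfy $\zeta^{s}=1$, write $a_X(n)=-p^{-n/2}[\#X(\F_{p^n})-(p^n+1)]$, so that $a_X(n)=\sum_\zeta \zeta^n$ by \eqref{eqn-sum of roots}; then $\sum_{n=0}^{s-1}a_X(n)$ equals $s$ times the multiplicity of $\sqrt p$ among the Weil numbers, because $\sum_{n=0}^{s-1}\zeta^n$ is $s$ when $\zeta=1$ and $0$ otherwise. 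Denote this multiplicity $\mu^+(X)$, and write $\mu^-(X)$ for the multiplicity of $-\sqrt p$.

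First I would show $\mu^+(C_\ell)=0$. By Corollary \ref{ckperiod2} we may take $s=4\ell p$, and I would split $\sum_{n=0}^{4\ell p-1}a_{C_\ell}(n)$ into the ranges $p\mid n$; $p\nmid n$ with $n$ even; $p\nmid n$ with $n$ odd. On the first range $a_{C_\ell}(n)=a_{B_\ell}(n)$ by Lemma \ref{isooverp}. On the second, $a_{C_\ell}(n)=-\frac1{p-1}a_{B_\ell}(n)$ by Lemma \ref{evenspread}, whose hypothesis holds because an even $n$ forces $(n,4\ell)$ to be even, hence not an odd divisor of $\ell$, so $\#B_\ell(\F_{p^n})\ne p^n+1$ by the point-count theorem for $B_k$. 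On the third, $a_{C_\ell}(n)=-\left(\frac{(-1)^{(n-1)/2}n}{p}\right)\sqrt p$ by Theorem \ref{finalck}. Using that $a_{B_\ell}$ has period $4\ell$ and reindexing, the first two ranges contribute $4\ell\,\mu^+(B_\ell)-2\ell\bigl(\mu^+(B_\ell)+\mu^-(B_\ell)\bigr)$, while the third contributes $0$ because the Legendre-symbol sum vanishes over a full period (for $p\equiv1\bmod 4$ it reduces to $\sum_{c=1}^{p-1}\left(\frac cp\right)=0$; for $p\equiv3\bmod4$ one pairs $n$ with $n+2p$, which negates the summand). Hence $\mu^+(C_\ell)=\frac{1}{2p}\bigl(\mu^+(B_\ell)-\mu^-(B_\ell)\bigr)$. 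Finally $\mu^+(B_\ell)=\mu^-(B_\ell)$: their difference is $\frac1{2\ell}\sum_{n\ \mathrm{odd},\,0\le n<4\ell}a_{B_\ell}(n)$, and for odd $n$ the integer $(n,4\ell)$ is an odd divisor of $\ell$, so each such $a_{B_\ell}(n)$ vanishes. Thus $\mu^+(C_\ell)=0$.

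Next I would run the analogous computation for $C_p$ with $s=4p$ (Corollary \ref{ckperiod1}). Here $a_{C_p}(n)=a_{B_p}(n)$ for $p\mid n$, $a_{C_p}(n)=-\frac1{p-1}a_{B_p}(n)$ for $n$ even coprime to $p$, and $a_{C_p}(n)=-\left(\frac{(-1)^{(n-1)/2}n}{p}\right)\sqrt p$ for $n$ odd coprime to $p$; the odd terms again sum to $0$, and evaluating the remaining two ranges (using that $B_p$ has period $4p$, and the handful of values $a_{B_p}(0),a_{B_p}(p),a_{B_p}(2p),a_{B_p}(3p)$) gives $\sum_{n=0}^{4p-1}a_{C_p}(n)=(p-1)(p^p-p)$. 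Hence $\mu^+(C_p)=\frac{(p-1)(p^{p-1}-1)}{4}$, which is a positive integer for every odd prime $p$. Therefore $1-\sqrt p\,T$ divides $L(C_p)$ but not $L(C_\ell)$, so $L(C_p)$ does not divide $L(C_\ell)$.

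The main obstacle I anticipate is the arithmetic bookkeeping in the two sum evaluations — chiefly verifying that the genuinely irrational contributions from odd $n$ (the $\pm\sqrt p$ terms) cancel over a full period, which is exactly what forces the case split $p\equiv1$ versus $p\equiv3\bmod4$ and a short character-sum/pairing argument; a secondary, more routine point is checking the hypothesis of Lemma \ref{evenspread} on the even-$n$ range, as indicated above.
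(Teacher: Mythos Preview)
Your proposal is correct and follows the same overall strategy as the paper: both arguments show that $\sqrt p$ occurs as a Weil number of $C_p$ but not of $C_\ell$, computing the relevant multiplicities via the inverse DFT over one full period. The execution, however, differs in both halves.

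For $\mu^+(C_\ell)=0$, the paper works directly from Theorem~\ref{finalck}: for each divisor $d\mid 4\ell$ with $p\nmid d$ there are exactly $(p-1)$ times as many $n\in[1,4\ell p]$ with $(n,4\ell p)=d$ as with $(n,4\ell p)=dp$, and the corresponding values of $a_{C_\ell}(n)$ differ by the factor $-\tfrac1{p-1}$, so the contributions cancel in pairs. Your route via $B_\ell$, reducing the question to $\mu^+(B_\ell)=\mu^-(B_\ell)$ (which follows since $a_{B_\ell}(n)=0$ for all odd $n$), is a little longer but more structural, and it handles the odd-$n$ contributions (the $\pm\sqrt p$ terms) explicitly via the character-sum/pairing argument, a point the paper leaves implicit.

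For $\mu^+(C_p)>0$, the paper is content with a crude triangle-inequality bound $\mu^+(C_p)\ge\tfrac1{4p}\bigl[(p-1)p^{p}-(4p-1)p\bigr]>0$, whereas you evaluate the sum exactly and obtain $\mu^+(C_p)=\tfrac{(p-1)(p^{p-1}-1)}4$. This is strictly more information than the lemma requires, at the cost of a few extra lines computing $a_{B_p}(0),a_{B_p}(p),a_{B_p}(2p),a_{B_p}(3p)$.
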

\begin{proof}
	We will check the multiplicities of $T-\sqrt{p}$ in $L_{C_p}(T)$ and $L_{C_\ell}(T)$.
	
	Using the Inverse Fourier Transform, the multiplicity of $T-\sqrt{p}$ in $L_{C_p}(T)$ is  $$\frac{1}{4p}\sum_{j=1}^{4p}(p^{-j/2}[\#C_p(\mathbb F_{p^j})-(p^j+1)])\ge\frac{1}{4p} \left[(p-1)p^p-(4p-1)p\right] >0$$ by Theorem \ref{finalck} and triangle inequality.
	
	Using the Inverse Fourier Transform, the multiplicity of $T-\sqrt{p}$ in $L_{C_\ell}(T)$ is  $$\frac{1}{4p\ell}\sum_{j=1}^{4p\ell}(p^{-j/2}[\#C_\ell(\mathbb F_{p^j})-(p^j+1)])$$ 
	which is 0 because $$|\{k \in \mathbb Z \ | \ (4p\ell,k)=d \ \text{ and } \ 1 \le j \le 4p\ell \}|=(p-1)\cdot |\{k \in \mathbb Z \ | \ (4p\ell,k)=dp \  \text{ and } \ 1 \le j \le 4p\ell  \}|$$ for any $d \mid l$ and by Theorem \ref{finalck}.
\end{proof}

\begin{thm}\label{Cknotdiv}
	Let $k$ and $\ell $ be positive integers such that $k$ does not divide $\ell$. 
	Then $L(C_k)$ does not divide $L(C_{\ell })$.
\end{thm}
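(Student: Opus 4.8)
The plan is to reduce the general case to the prime-conductor case already handled in Lemma \ref{Cp-nmid-Cl}, by analysing the period $s_{C_k}$ of $C_k$ and invoking Proposition \ref{divisiblity-period}. First I would record from Corollaries \ref{ckperiod1} and \ref{ckperiod2} that $C_k$ is minimal over $\mathbb F_{p^{4k}}$ when $p\mid k$ and over $\mathbb F_{p^{4kp}}$ when $p\nmid k$; this makes $s_{C_k}$ a divisor of $4k$ (resp.\ $4kp$). The crux is to pin down $s_{C_k}$ \emph{exactly}. Using Theorem \ref{finalck} together with the Inverse Discrete Fourier Transform (exactly as in the proof of Lemma \ref{Cp-nmid-Cl}), one computes the multiplicity of a primitive $d$-th root of unity among the $\zeta_i$ for each $d\mid 4k$ or $d\mid 4kp$; the estimates there (a dominant term $(p-1)p^{(k,n)}$ against $O(p)$ error terms) show these multiplicities are strictly positive for the maximal-order $d$, so that $s_{C_k}=4k$ when $p\mid k$ and $s_{C_k}=4kp$ when $p\nmid k$. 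In particular $p\mid s_{C_k}$ always, and moreover $\operatorname{ord}_p$ of the ``$k$-part'' of $s_{C_k}$ equals that of $k$ up to the single extra factor $p$ contributed when $p\nmid k$.

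Granting this, the theorem follows by a short divisibility argument. Suppose $L(C_k)\mid L(C_\ell)$; by Proposition \ref{divisiblity-period} we get $s_{C_k}\mid s_{C_\ell}$. Split into cases on $p$-divisibility. If $p\nmid k\ell$, then $s_{C_k}=4kp$ and $s_{C_\ell}=4\ell p$, so $4kp\mid 4\ell p$ forces $k\mid\ell$, contradiction. If $p\mid k$ but $p\nmid \ell$, then $s_{C_k}=4k$ has strictly larger $p$-adic valuation than $s_{C_\ell}=4\ell p$ once $\operatorname{ord}_p(k)\ge 2$, while if $\operatorname{ord}_p(k)=1$ write $k=pk'$ with $p\nmid k'$ and use Lemma \ref{Cp-nmid-Cl}-type reasoning: $C_k$'s period is $4pk'$, $C_\ell$'s is $4p\ell$, so $k'\mid \ell$; combined with $p\mid s_{C_\ell}$ but the factor-of-$p$ bookkeeping in $s_{C_\ell}=4p\ell$ this still cannot absorb $k=pk'$ unless $k\mid\ell$. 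The remaining case $p\mid \ell$ (and $p\mid k$ or not) is handled symmetrically by comparing $\operatorname{ord}_p$ of the periods; whenever $k\nmid\ell$ the required divisibility $s_{C_k}\mid s_{C_\ell}$ fails either in the $p$-part or in the prime-to-$p$ part.

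The main obstacle is the exact determination of $s_{C_k}$, i.e.\ showing that \emph{every} divisor $d$ of $4k$ (resp.\ $4kp$) actually occurs as the order of some $\zeta_i$, not merely that it could. This requires running the Inverse Discrete Fourier Transform against the eight-case formula of Theorem \ref{finalck} and checking positivity of the relevant coefficient for the largest $d$ in each residue class; the estimates are of the same flavour as those already carried out in Lemma \ref{divisibility-lemma-un} and Lemma \ref{Cp-nmid-Cl}, so the argument is routine but bookkeeping-heavy. A cleaner alternative, which I would try first, is to observe that $L(C_k)\mid L(C_{k\ell'})$ for appropriate $\ell'$ by Theorem \ref{wholeconj}, reduce to the case $\gcd(k,\ell)=k_0<k$, and then note that $C_k$ being minimal exactly over $\mathbb F_{p^{s_{C_k}}}$ with $s_{C_k}\nmid s_{C_\ell}$ is forced as soon as one knows $C_k$ is \emph{not} minimal over any proper subfield of $\mathbb F_{p^{s_{C_k}}}$ — and that last fact is immediate from Theorem \ref{finalck}, since the displayed value of $-p^{-n/2}[\#C_k(\mathbb F_{p^n})-(p^n+1)]$ is not equal to $p^{g}\cdot(\text{something making it minimal})$ for $n$ a proper divisor of $s_{C_k}$.
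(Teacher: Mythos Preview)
Your overall strategy matches the paper's: determine the period $s_{C_k}$ from Corollaries \ref{ckperiod1} and \ref{ckperiod2}, then use Proposition \ref{divisiblity-period} to rule out $L(C_k)\mid L(C_\ell)$ in most cases, and invoke Lemma \ref{Cp-nmid-Cl} for the residual case. You are in fact more scrupulous than the paper on one point: the corollaries only give $s_{C_k}\mid 4k$ (resp.\ $4kp$), and you correctly flag that equality must be checked, whereas the paper simply asserts the exact period values.

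There is, however, a genuine gap in your case analysis. In the situation $p\mid k$, $p\nmid\ell$, $\operatorname{ord}_p(k)=1$, write $k=pk'$ with $p\nmid k'$. If in addition $k'\mid\ell$ (which is compatible with $k\nmid\ell$, since $p\nmid\ell$), then $s_{C_k}=4pk'$ and $s_{C_\ell}=4p\ell$, and $4pk'\mid 4p\ell$ \emph{does} hold. Your ``factor-of-$p$ bookkeeping'' sentence claims the period divisibility still fails here, but it does not: the period argument is genuinely powerless in this subcase. The paper handles exactly this configuration (its Case IV--B) by a different mechanism: since $p\mid k$, Theorem \ref{wholeconj} gives $L(C_p)\mid L(C_k)$, so $L(C_k)\mid L(C_\ell)$ would force $L(C_p)\mid L(C_\ell)$, contradicting Lemma \ref{Cp-nmid-Cl} because $p\nmid\ell$. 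You mention Lemma \ref{Cp-nmid-Cl} as a target to reduce to, but in the actual case-by-case argument you never carry out this reduction; instead you try to push the period comparison through, and that cannot succeed. Replacing your period-bookkeeping sentence with the two-line transitivity argument just described closes the gap and brings your proof in line with the paper's.
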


\begin{proof}
We use Corollaries $\ref{ckperiod1}$ and $\ref{ckperiod2}$ which give the
period of $C_k$.

	Case I: If $p \mid k,l$, then the period of $C_k$ is $4k$ and the period of $C_\ell$ is $4\ell$. Since $4k \nmid 4\ell$,  $L(C_k)$ does not divide $L(C_\ell)$ by Proposition \ref{divisiblity-period}. 
	
	Case II: If $p \nmid k$and  $p\mid \ell$, then the period of $C_k$ is $4kp$ and the period of $C_\ell$ is $4\ell$. Since $4kp \nmid 4\ell$,  $L(C_k)$ does not divide $L(C_\ell)$ by Proposition \ref{divisiblity-period}. 
	
	Case III: If $p \nmid k$and  $p\nmid \ell$, then the period of $C_k$ is $4kp$ and the period of $C_\ell$ is $4\ell p$. Since $4kp \nmid 4\ell p$,  $L(C_k)$ does not divide $L(C_\ell)$ by Proposition \ref{divisiblity-period}.

	Case IV - A: If $p \mid k$,  $p\nmid \ell$ and $(k/p) \nmid \ell$, then the period of $C_k$ is $4k$ and the period of $C_\ell$ is $4\ell p$. Since $4k \nmid 4\ell p$,  $L(C_k)$ does not divide $L(C_\ell)$ by Proposition \ref{divisiblity-period}. 
	
	Case IV - B: If $p \mid k$,  $p\nmid \ell$ and $(k/p) \mid \ell$, then the period of $C_k$ is $4k$ and the period of $C_\ell$ is $4\ell p$. Since $4k \mid 4\ell p$, we cannot use the Proposition \ref{divisiblity-period}. 
	
	Since $p\mid k$, we have $L(C_p)$ divides $L(C_k)$. If $L(C_k)$ divides $L(C_\ell)$, then $L(C_p)$ divides $L(C_\ell)$ which gives a contradiction by Lemma \ref{Cp-nmid-Cl}.
\end{proof}

\begin{cor}
	Let $k$ and $\ell $ be positive integers such that $k<\ell $ and $k$ does not divide $\ell$. 
	Then there is no map from $C_\ell$ to $C_k$.
\end{cor}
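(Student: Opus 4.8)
The plan is to obtain this as an immediate consequence of Theorem \ref{Cknotdiv} and the Kleiman--Serre theorem, mirroring the argument given above for the $B_k$ family. First I would suppose, for contradiction, that there is a non-constant morphism $C_\ell \longrightarrow C_k$ defined over $\mathbb F_p$. Since both curves are smooth and projective, such a morphism is automatically surjective: its image is a closed irreducible subvariety of dimension $1$ inside the curve $C_k$, hence all of $C_k$. Therefore Theorem \ref{chapman:KleimanSerre} applies and yields $L(C_k) \mid L(C_\ell)$.

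Next I would invoke the standing hypothesis that $k$ does not divide $\ell$. Theorem \ref{Cknotdiv} asserts precisely that in this case $L(C_k)$ does \emph{not} divide $L(C_\ell)$. This contradicts the divisibility obtained in the previous step, so no such morphism can exist, which is the statement of the corollary. Note that the condition $k<\ell$ is present only to exclude the trivial case $k=\ell$; the substantive input is $k \nmid \ell$.

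I do not expect any genuine obstacle here, since all the work has already been done in establishing the periods of $C_k$ (Corollaries \ref{ckperiod1} and \ref{ckperiod2}) and deducing Theorem \ref{Cknotdiv}. The only point that deserves a sentence is the reduction from a ``map of curves'' to the surjective morphism defined over $\mathbb F_p$ that the Kleiman--Serre theorem requires: this is automatic for non-constant morphisms of smooth projective curves, and the ground field is $\mathbb F_p$, which is exactly the field over which the $L$-polynomials in Theorem \ref{Cknotdiv} are taken. With these conventions in place the proof is a two-line deduction.
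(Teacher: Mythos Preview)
Your argument is correct and coincides with the paper's own proof, which simply cites Theorem \ref{chapman:KleimanSerre} and Theorem \ref{Cknotdiv}. Your added remarks on surjectivity and the field of definition are consistent with the conventions in force and do not change the approach.
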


\begin{proof}
	By the Kleiman-Serre theorem (Theorem \ref{chapman:KleimanSerre}) and Theorem \ref{Cknotdiv}.
\end{proof}

We close this section by remarking again that we do not know if there
is a rational map from $B_{2k}$ to $B_k$ or from $C_{2k}$ to $C_k$.

\section{Remark on the the Curves $y^p-y=x^{p^k+1}+ax$ over $\mathbb F_p$}\label{a}

We  remark that the number of $\mathbb{F}_{p^n}$-rational points on 
$C_{k,a}: y^p-y=x^{p^k+1}+ax$ is equal to the number of $\mathbb{F}_{p^n}$-rational points on 
$C_k$, where $a\in \mathbb F_p^\times$.

All the proofs in this paper go through, with minor changes.
There is the same relationship to $B_k$,  the
map in \eqref{onetoone} must be changed to the map $(x, y)\mapsto (x-a2^{-1},y)$
and all proofs go through.
The curve $X_k$ in the proof of Lemma \ref{divisiblity-odd}
must be replaced by $y^p-y=x^{p^k+1}-a4^{-1}$.

Therefore, the divisibility property (and its opposite direction) also holds for these curves.



\begin{thebibliography}{99}

\bibitem{Betal} Irene Bouw, Wei Ho, Beth Malmskog, Renate Scheidler, Padmavathi Srinivasan,
and Christelle Vincent,
Zeta functions of a class of Artin-Schreier
curves with many automorphisms, https://arxiv.org/abs/1410.7031


\bibitem{chapman:AM} O. Ahmadi\ and G. McGuire, Curves over finite fields and linear recurring sequences, Surveys in Combinatorics 2015, Cambridge.

\bibitem{chapman:AMR} O. Ahmadi, G. McGuire\ and\ A. Rojas-Le\'on, Decomposing Jacobians of curves over finite fields in the absence of algebraic structure, J. Number Theory {\bf 156} (2015), 414--431. MR3360347

\bibitem{conj4} I. Blanco-–Chac\'{o}n, R. Chapman, S. Fordham, and G. McGuire (2016) Divisibility of L-polynomials for a family of curves. Contemporary Developments in Finite Fields and Applications: pp. 1-10.

\bibitem{lidl} R. Lidl, H. Niederreiter, Finite Fields, Addison-Wesley, 1983.

\bibitem{MY2} G. McGuire, E. S. Y{\i}lmaz, On the Zeta Functions of Supersingular Curves, preprint.

\end{thebibliography}
\end{document}